\def\tcr{\textcolor{red}}
\def\newspan{\operatorname{span}}
\def\supp{\operatorname{supp}}
\def\ker{\operatorname{ker}}
\def\dim{\operatorname{dim}}
\def\im{\operatorname{Image}}
\def\Im{\operatorname{Im}}
\def\supp{\operatorname{supp}}
\def\R{\mathbb{R}}
\def\N{\mathbb{N}}
\def\Z{\mathbb{Z}}
\def\LL{\mathcal{L}}
\def\ep{\varepsilon}
\def\pset{\mathcal{P}}
\def\ptile{p}
\def\supp{\operatorname{supp}}
\def\Int{\operatorname{Int}}
\def\rrule{\mathcal{R}}
\def\domain{X}
\def\tilingspace{\Omega}
\def\sub{\omega}
\def\disjoint{\bigsqcup_{p \in \pset}}
\def\disjointelt{\sqcup_{p \in \pset}}
\def\disjointE{\bigsqcup_{e \in E(G)}}
\def\MWedgegraph{\Delta}
\def\Tinf{T^{(\infty)}}
\def\tinf{t^{(\infty)}}
\def\tzero{T^{(0)}}
\def\tone{T^{(1)}}
\def\tn{T^{(n)}}
\def\psik{\psi^{(k)}}
\def\psin{\psi^{(n)}}
\def\wpsin{\widetilde\psi^{(n)}}
\def\wpsiinf{\widetilde\psi^{(\infty)}}
\def\psiinf{\psi^{(\infty)}}
\def\oinf{\Omega^{(\infty)}}
\def\pinf{\pset^{(\infty)}}
\def\Sinf{S^{(\infty)}}
\def\KSinf{A^{(\infty)}}
\def\einf{e^{(\infty)}}
\def\subinf{\omega_{(\infty)}}
\def\MWedgegraph{\Delta}
\def\diam{\operatorname{diam}}
\renewcommand{\int}{\operatorname{int}}
\newtheorem{thm}{Theorem}[section]
\newtheorem{cor}[thm]{Corollary}
\newtheorem{lemma}[thm]{Lemma}
\newtheorem{prop}[thm]{Proposition}
\theoremstyle{definition}
\newtheorem{definition}[thm]{Definition}
\theoremstyle{remark}
\newtheorem{example}[thm]{Example}
\numberwithin{equation}{section}
\tikzstyle{vertex}=[circle,thick]
\tikzstyle{goto}=[->,shorten >=1pt,>=stealth,semithick]
\newcommand{\chaird}[5][1]{ 
\begin{scope}[scale={#1},shift={({#2},{#3})}, rotate around={#4:(1/2,1/2)}]
\node at (0,0) {$\scriptstyle\bullet$};
\draw[ultra thin] (0,0) -- (1,0) -- (1,1/2) -- (1/2,1/2) -- (1/2,1) -- (0,1) -- (0,0);
\node at (0.3,0.3) {$\scriptstyle{#5}$};
\end{scope}
}
\newcommand{\chair}[5][1]{	
\begin{scope} [scale={#1},shift={({#2},{#3})}, rotate around={#4:(1/2,1/2)}]
\draw[ultra thin] (0,0) -- (1,0) -- (1,1/2) -- (1/2,1/2) -- (1/2,1) -- (0,1) -- (0,0);
\node at (0.3,0.3) {$\scriptstyle{#5}$};
\end{scope}
} 
 \title[Fractal dual substitution tilings]{Fractal dual substitution tilings}
 \author{Natalie Priebe Frank}
 \address{Department of Mathematics, Vassar College, Box 248, Poughkeepsie, NY  12604, USA}
 \email{nafrank@vassar.edu}
 \author{Samuel B.G. Webster}
 \address{Samuel B.G. Webster, Michael F. Whittaker \\ School of Mathematics and Applied Statistics  \\ The University of Wollongong\\ NSW  2522\\ AUSTRALIA} \email{sbgwebster@gmail.com, mfwhittaker@gmail.com}
 \author{Michael F. Whittaker}
 \thanks{This research was partially supported by the following:  the Australian Research Council, the Institute of Mathematics and its Applications at University of Wollongong, the German Research Council (DFG) within the CRC 701, and the Babette Rogol '61 Memorial Fund of Vassar College.}
\keywords{fractals; graph iterated function systems; nonperiodic tilings; substitution tilings}
\subjclass[2010]{Primary {37D40}; Secondary {05B45,52C20}}
\begin{document}

\begin{abstract}
Starting with a substitution tiling, we demonstrate a method for constructing infinitely many new substitution tilings. Each of these new tilings is derived from a graph iterated function system and the tiles have fractal boundary. We show that each of the new tilings is mutually locally derivable to the original tiling. Thus, at the tiling space level, the new substitution rules are expressing geometric and combinatorial, rather than topological, features of the original. Our method is easy to apply to particular substitution tilings, permits experimentation, and can be used to construct border-forcing substitution rules. For a large class of examples we show that the combinatorial dual tiling has a realization as a substitution tiling. Since the boundaries of our new tilings are fractal we are led to compute their fractal dimension. As an application of our techniques we show how to compute the \v{C}ech cohomology of a (not necessarily border-forcing) tiling using a graph iterated function system of a fractal tiling.
\end{abstract}

\maketitle

\section{Introduction}

A tiling of the plane is a covering of $\R^2$ by closed subsets, called tiles, such that the interiors of two distinct tiles are disjoint. One method of producing tilings is by substitution: a rule which expands each tile by a fixed amount and then breaks each expanded tile into smaller pieces, each of which is an isometric copy of an original tile. The most famous substitution tiling is the Penrose tiling \cite{pentaplexity}; many other substitution tilings have been constructed and can be found online in the Tilings Encyclopedia \cite{Tiling Encyc}.

\subsection{Overview of the field.}
The study of substitution tilings is motivated by several disparate fields of mathematics and science. One thing they all have in common is that tilings are always constructed using a finite set of tiles called \emph{prototiles} as their building blocks.  The study of {\em aperiodic} prototile sets---finite prototile sets that can only form nonperiodic tilings---began with logician Hao Wang \cite{Wang}.  He tied them to the decidability of the Domino Problem, which asks if there is an algorithm that can determine whether any given set of (square) prototiles, with specified edge matching rules, can tile the plane.  He conjectured that the problem was decidable, but his conjecture depended upon non-existence of aperiodic prototile sets. Wang's student Robert Berger \cite{Ber} proved the conjecture false by producing an aperiodic prototile set with 20,426 prototiles.  It was the search for small aperiodic prototile sets that led Penrose to discover his famous tiles.

Research on aperiodic tilings was purely academic until Dan Shechtman's Nobel Prize-winning discovery of quasicrystals \cite{She}. In a sense, Shechtman's discovery was predicted  by Alan L. Mackay \cite{Mackay} when he computed the diffraction of a Penrose tiling. Its  `impossible' symmetry was found again in Shectman's quasicrystal and it became clear that Penrose tilings could be used as a mathematical model. 
Diffraction patterns of quasicrystals are now modeled by the spectral theory of tiling spaces, in particular the dynamical spectrum of the translation operator (see \cite{BG} for a beautiful exposition of the most recent advances in this theory). The dynamical spectrum of tilings is best understood in the case of substitution tilings.

An understanding of tiling spaces at this physical level led Bellissard to his gap-labelling conjecture \cite{Bel}, which connects the spectral gaps of the Schr\"odinger operator associated with a tiling space to the K-theory and cohomology of that space. Bellissard's conjecture was proven  independently in \cite{Bel,BO,KP}.  By the same token, tiling spaces provide interesting examples of $C^*$-algebras, and the geometry imposed by tilings makes them excellent candidates for study using Connes' noncommutative geometry program \cite{Con}.

\subsection{Results.}

The main goal of this paper is to establish a novel method for producing, from some given substitution tiling, infinitely many new substitution tilings.  The method produces finite sets of prototiles which have fractal boundaries, so we call them \emph{fractal realizations} of the original tiling.  The construction depends on a synthesis of combinatorics and geometry in a way that has not previously been capitalized upon in the study of substitution tilings.  Moreover, it allows for hands-on experimentation with an infinitude of choices, and for now it is not completely clear the significance of making one choice over another.  In \cite{FW}, we gave a somewhat ad-hoc method for making a  fractal realization of the Pinwheel tiling. In this paper we show that a similar construction works for every primitive substitution tiling where tiles meet singly edge-to-edge and remove the ad-hoc nature of the construction in \cite{FW}.

One advantage to our method is that it provides access to some of the physical information described above.  For instance, our new substitutions are easily made to \emph{force the border}, an essential property for cohomology computations.  When a tiling does not force the border, our method requires less information than the original approach in \cite{AP} and uses the same amount of information, but is geometrically more elegant, than the most efficient known method \cite{BDHS}.  
The real power behind our approach comes from the geometric information encoded in the new substitution, which we capitalize on by constructing spectral triples on the $C^*$-algebra of the original tiling space in \cite{Mampusti,FMWW}. According to Connes' noncommutative geometry program \cite{Con}, the existence of spectral triples implies that fractal realizations are defining a noncommutative Riemannian geometry on the $C^*$-algebra of the original tiling space (viewed as a noncommutative manifold).

There are further interesting questions invoked by our fractal substitution tilings.  For example, we give a formula for the fractal dimension of the boundary of the prototiles for any of our fractal tilings. But what information does the set of all possible fractal dimensions for a given tiling substitution carry? The answer to this question seems to rely on a deep connection between the combinatorial graph-theoretic properties of the tiling and the geometric property of fractal dimension. Further questions arise from a combinatorial standpoint; for example, which substitution tilings admit {\em combinatorial dual tilings}\footnote{Two tilings are combinatorially dual if there is a one-to-one correspondence between their edge sets, between the tiles of one and the vertices of the other, and vice versa.} that are also substitution tilings?  We provide sufficient conditions but have not yet been able to provide a complete characterization.

Our method of obtaining fractal substitution tilings is different than those that have appeared in the literature. For example, fractal tilings arise in the seminal work of Kenyon \cite{Kenyon} characterizing the possible expansion factors for substitution tilings. Since then fractal tilings have appeared many times, for instance in \cite{Gensubref,Bandt-Gummelt,BV,Me.Boris,FW}. Our construction distinguishes itself from these in a few ways. The first is that we begin with a known substitution rule and construct an infinite family of substitutions whose tiling spaces are mutually locally derivable. In particular, we are able to construct substitution tilings that are mutually locally derivable to their original but with specified geometric properties that lend themselves more easily to computation.  Our construction differs, also, because it is not simply a redrawing of existing tiles but rather a recomposition that creates new tiles.  We can understand, and indeed have some control over, the combinatorics of the new tiles within their tilings.

\subsection{Methodology--an example}

The main tool in our construction is a \emph{recurrent pair} $(G,S)$, which is a combinatorial and geometric structure that is compatible with the original substitution. The best way to illustrate how to construct a recurrent pair is with a simple example.  We begin with a two-dimensional version of the Thue-Morse substitution rule, which we call the ``2DTM" substitution. It has two prototiles, labelled $\alpha$ and $\beta$, that are both unit squares. The substitution expands each prototile by a factor of four and replaces it with the patches shown in Figure \ref{Thue-Morse-substitution}.

\begin{figure}[ht]
\scalebox{0.8}{
\begin{tikzpicture}
\node (tile1) at (0,0) [label=below:{$\alpha$}] {\includegraphics[width=0.75cm]{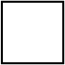}};
\node (tile1sub) at (3,0) [label=below:{$\omega(\alpha)$}] {\includegraphics[width=3cm]{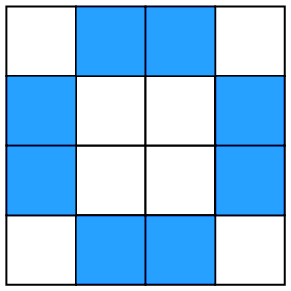}}
	edge[<-] (tile1);	
\begin{scope}[xshift=7cm]
\node (tile2) at (0,0) [label=below:{$\beta$}] {\includegraphics[width=0.75cm]{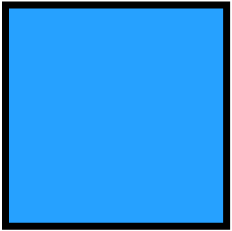}};
\node (tile2sub) at (3,0) [label=below:{$\omega(\beta)$}] {\includegraphics[width=3cm]{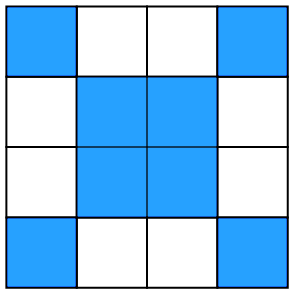}}
	edge[<-] (tile2);	
\end{scope}
\end{tikzpicture}}
\caption{The two-dimensional Thue-Morse substitution rule.}
\label{Thue-Morse-substitution}
\end{figure}

To construct a recurrent pair $(G, S)$, we begin by embedding a planar graph $G$ into each of the prototiles $\alpha$ and $\beta$, as shown on the left of Figure \ref{2DTM_graph_sub}.   Note that we have chosen $G$ so that it has one vertex in the interior of each tile and connects to the interior of each edge.  If $G$ were placed in every tile in an infinite tiling of these tiles, it would construct a new tiling that is combinatorially dual to the original.  In general it is natural to start with such a dual graph for several reasons that will be outlined in this paper.

Next we construct a new graph $S$ on the prototiles by substituting the prototiles, this time without expanding, embedding the initial graph into each sub-tile, then selecting a subgraph $S$ that is combinatorially equivalent to the initial graph if we ignore all vertices of degree two.\footnote{It is important to note that selecting such a graph is not necessarily possible in general and depends on the combinatorics of the tiling, the substitution, and $G$}  In Figure \ref{2DTM_graph_sub}, the embedding stage is labelled $\rrule(G)$, and the selection stage is labelled $S$.   We have made a choice of $S$ that breaks the inherent symmetry of the 2DTM tilings, but this was not necessary.

\begin{figure}[ht]
\[
\begin{tikzpicture}[>=stealth,scale=1.1]
\node (a) at (0,3.5) [label=below:{$G_\alpha$}] {\includegraphics[width=2.5cm]{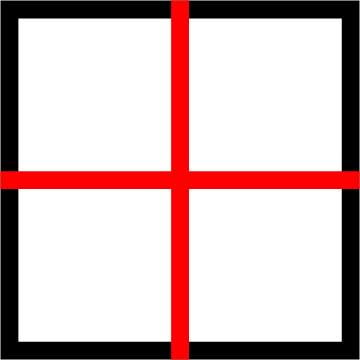}};
\node (b) at (3.5,3.5) [label=below:{$\rrule(G)_\alpha$}] {\includegraphics[width=2.5cm]{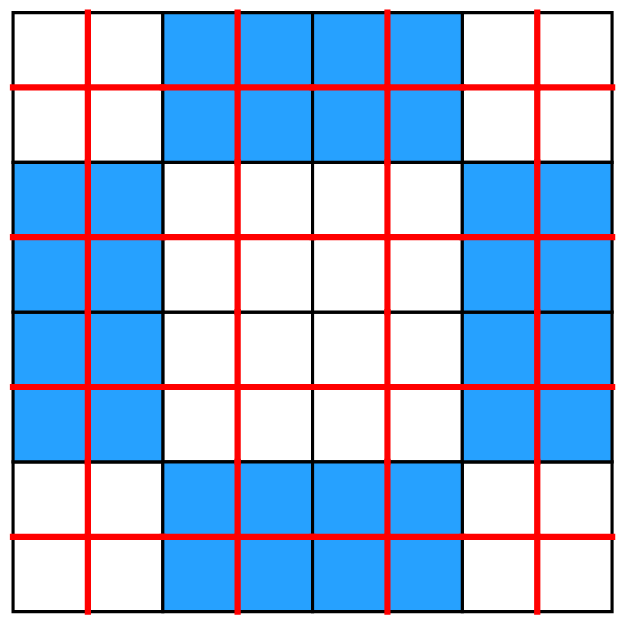}}
	edge[<-] (a);
\node (c) at (7,3.5) [label=below:{$S_\alpha$}] {\includegraphics[width=2.5cm]{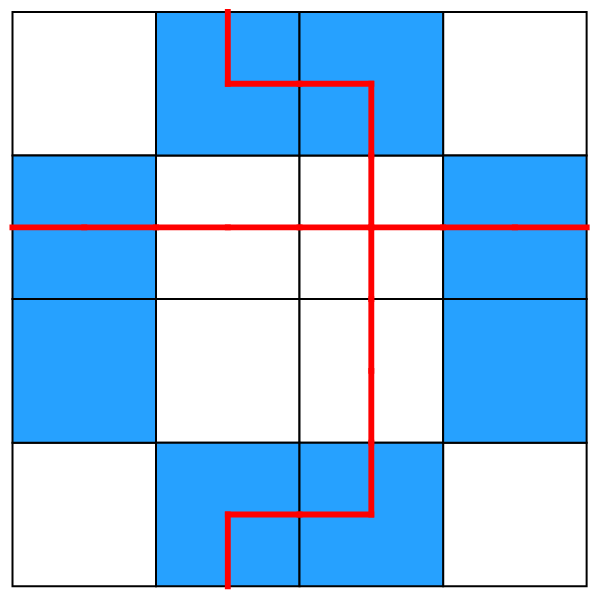}}
	edge[<-] (b);
\node (d) at (10.5,3.5) [label=below:{Fractal $G_\alpha$}] {\includegraphics[width=2.5cm]{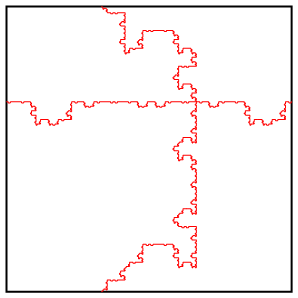}}
	edge[<-,dashed] (c);
\node (e) at (0,0) [label=below:{$G_\beta$}] {\includegraphics[width=2.5cm]{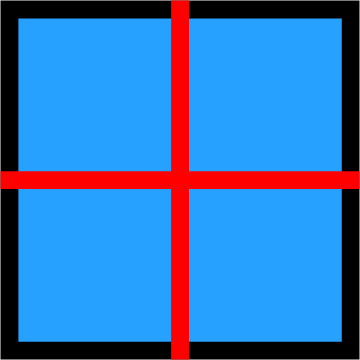}};
\node (f) at (3.5,0) [label=below:{$\rrule(G)_\beta$}] {\includegraphics[width=2.5cm]{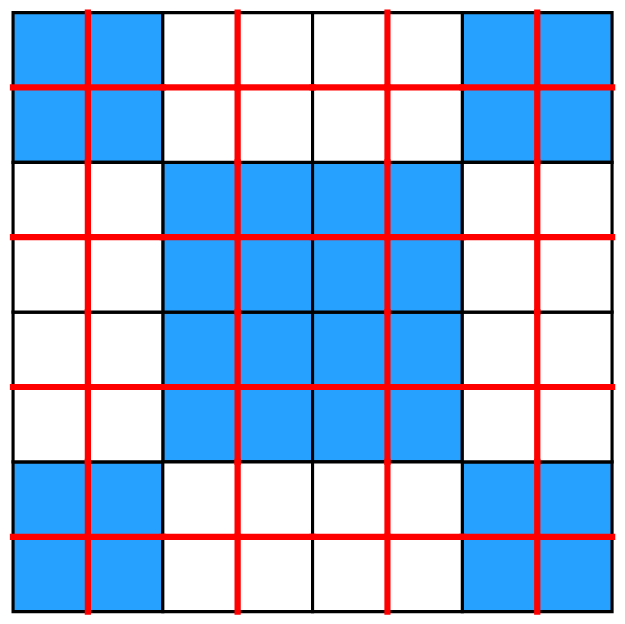}}
	edge[<-] (e);
\node (g) at (7,0) [label=below:{$S_\beta$}] {\includegraphics[width=2.5cm]{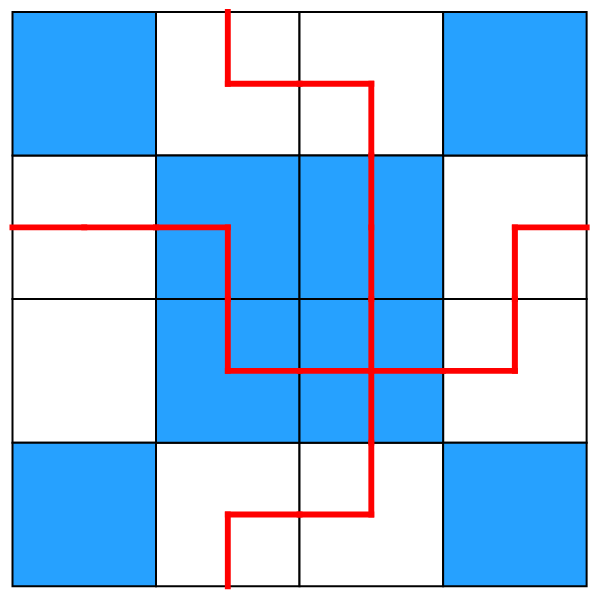}}
	edge[<-] (f);
\node (h) at (10.5,0) [label=below:{Fractal $G_\beta$}] {\includegraphics[width=2.5cm]{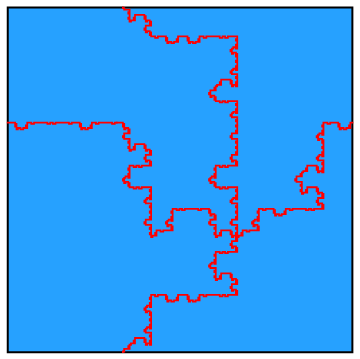}}
	edge[<-,dashed] (g);
\end{tikzpicture}
\]
\caption{A recurrent pair on the $2$-dimensional Thue-Morse substitution leading to a fractal graph.}
\label{2DTM_graph_sub}
\end{figure}

We call $(G,S)$ a recurrent pair, and $S$ can be thought of as of graph substitution of $G$. In fact, a recurrent pair forms a graph iterated function system (GIFS) and has an attractor (which is a fractal). Under the right conditions the attractor is a graph that is combinatorially equivalent to both $G$ and $S$.   We show the attractor for this example in the far right of Figure \ref{2DTM_graph_sub}.

Figure \ref{2DTM_overlay} shows what happens when the attractor is placed into every tile of a 2DTM tiling.
Everything works perfectly in this example and we obtain a combinatorially dual tiling that is itself a substitution tiling. The fact that it is a substitution tiling follows from the fact that the attractor of the recurrent pair $(G,S)$ is invariant under the substitution rule of $T$.  
\begin{figure}[ht]
\[
\includegraphics[width=15cm]{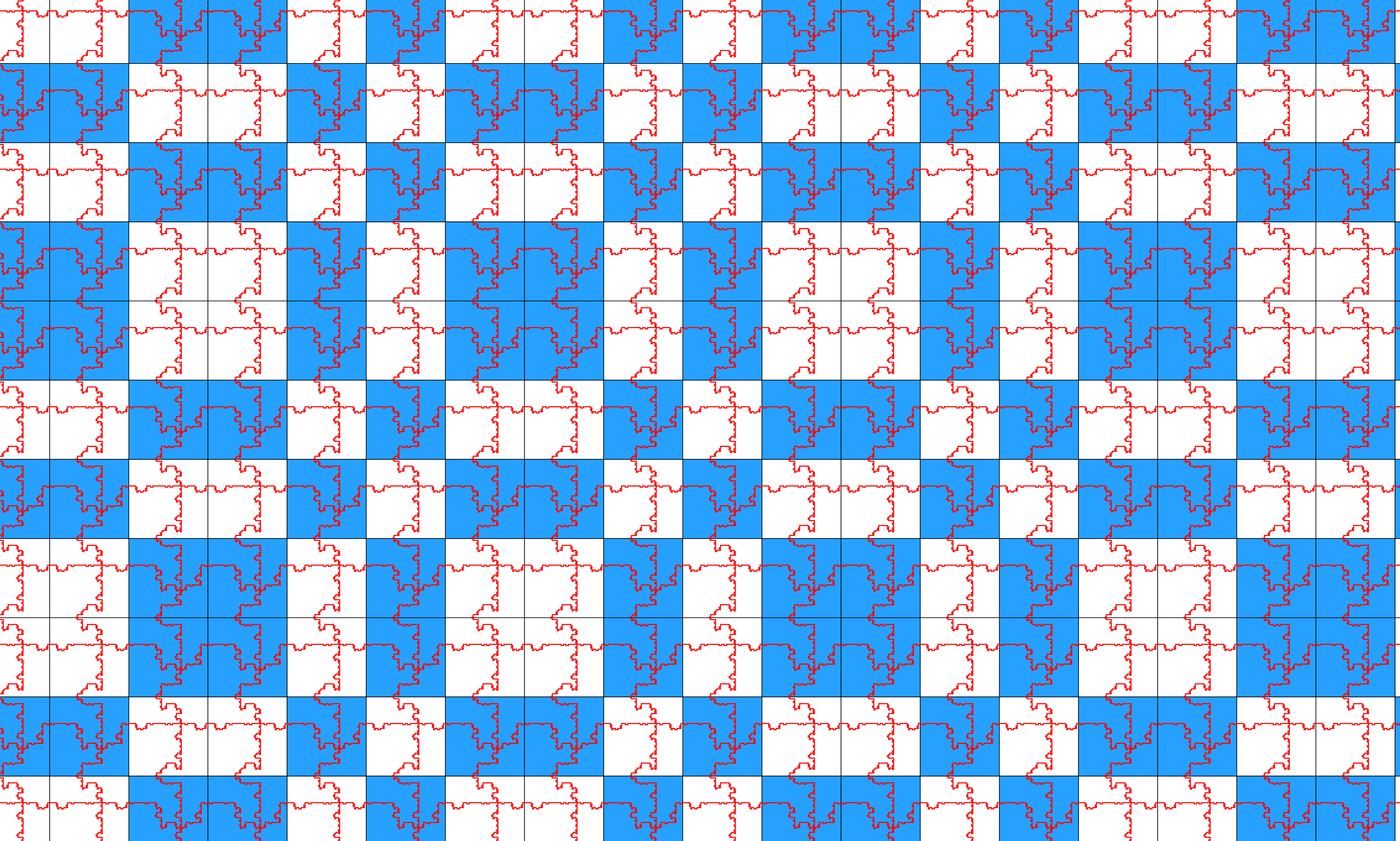}
\]
\caption{A patch of the $2$-dimensional Thue-Morse tiling with a fractal dual tiling overlaid.}
\label{2DTM_overlay}
\end{figure}
%

\subsection{Organziation of the paper.}

Given a substitution tiling, our main result is to construct new substitution tilings using a recurrent pair $(G, S)$. There is a natural map from the edges of $G$ to the edges of the attractor associated with $(G,S)$. If this natural map is injective, then Theorem \ref{prop:T infty is a tiling} says that the resulting tiling is mutually locally derivable to the original substitution tiling. In Theorem \ref{thm: psi infinity is injective} we provide sufficient \emph{injectivity conditions} on the recurrent pair for the map to be injective.  Theorem \ref{thm:existence} implies that every primitive substitution tiling whose tiles meet singly edge-to-edge has an infinite number of distinct recurrent pairs satisfying the injectivity conditions and whose fractal realization is border-forcing. We conjecture that every substitution tiling of the plane with finite local complexity has a recurrent pair with the necessary injectivity conditions. We also believe that our techniques should extend to tilings with infinite rotational symmetry, such as the Pinwheel tiling, but have not addressed that in this paper.

We have organized the paper as follows. In Section \ref{tiling_defn_sec} we introduce substitution tilings along with the definitions we require in the paper. Section \ref{comb_defn_sec} introduces the combinatorics of tilings. We give an alternative description of tiling substitutions in Section \ref{alt_view_sub_sec} using digit sets, and describe how these digit sets give rise to a contraction map on prototiles. Section \ref{sec:psi} introduces the notion of a recurrent pair on a substitution tiling $T$ and shows how a recurrent pair produces a fractal realization of $T$. In Section \ref{sec:exist} we give injectivity conditions on a given recurrent pair and we prove they are what is needed to guarantee existence of a border-forcing fractal realization. We then show that every primitive substitution tiling with tiles meeting singly edge-to-edge has such a recurrent pair. In Section \ref{FractalDimension_sec} we compute the fractal dimension of the new prototile boundaries. Section \ref{cohom_zeta_sec} shows how a recurrent pair can be used to compute the \v{C}ech cohomology of the original tiling space. Finally, in Appendix \ref{examples_appendix_sec} we give several examples of fractal realizations of some famous substitution tilings.

\noindent
\textbf{Acknowledgements:} We are extremely grateful to Michael Baake and Franz G\"{a}hler for  helping to rectify an error in the cohomology computations from an early version of the paper. We are also indebted to Lorenzo Sadun for several helpful conversations and ideas and to Michael Mampusti for the Mathematica code used to produce some of the figures.

\section{Tiling definitions}\label{tiling_defn_sec}

In this section we introduce the basics of tilings. We provide the assumptions that will be used in the paper and introduce substitution rules. We establish the relationship between tiling substitution rules and self-similar tilings so that we may use these two notions interchangeably for the remainder of the paper. 

The tilings in this paper are built out of a finite number of labelled shapes called prototiles. A {\em prototile} $p$ consists of a closed subset of $\R^2$ that is homeomorphic to a topological disk, denoted $\supp(p)$, and a label $\ell(p)$. The purpose of the labels is to distinguish between prototiles that have the same shape, and a common visualization is by color (for instance white vs. black unit squares). A {\em prototile set} $\pset$ is a finite set of prototiles.

A {\em tile} $t$ is any translate of a prototile: for $p \in \pset$ and $x \in \R^2$ we use the notation $t=p+x$ to mean the topological disk $\supp(p)+x$ with label $\ell(t)=\ell(p)$. Two tiles that are translates of the same prototile are said to be {\em equivalent}; we note that equivalent tiles have congruent supports and carry the same label.
\begin{definition} Given  a set of prototiles $\pset$, a \emph{tiling}  is a countable collection of tiles $\{t_1,t_2,\dots \}$ each of which is a translate of a prototile, and such that 
\begin{enumerate}
\item $\bigcup_{i=1}^\infty \supp(t_i) = \R^2$ and
\item $\Int(\supp(t_i)) \cap \Int(\supp(t_j)) = \varnothing$ for $i \neq j$.
\end{enumerate}
\end{definition}
\noindent The first condition implies that the tiles cover the plane and the second that they intersect on the boundaries only. These are sometimes referred to as the covering and packing conditions.

A {\em patch} is a finite set of tiles whose supports cover a connected set that intersect at most on their boundaries.  Connected finite subsets of tilings are patches, and tilings are sometimes thought of as infinite patches.  We denote the set of all patches from a prototile set $\pset$ by $\pset^*$.

Given a patch or tiling $Q$ and $x \in \R^2$, the set $Q+x:=\{t+x \mid t \in Q\}$ is also a patch or tiling.  Like tiles, patches and tilings are called {\em equivalent} if they are translations of the same patch or tiling.

\begin{definition}
A tiling $T$ has {\em finite local complexity (FLC)} if the set of all two-tile patches appearing in $T$ is finite up to equivalence. A tiling is \emph{nonperiodic} if $T+x=T$ implies $x=0$.
\end{definition}

If Q is a patch of tiles or a tiling and $S$ is a subset of $\R^2$, we define the patch
\[
[S]^Q:=\{q \in Q \mid \supp(q) \cap S \neq \varnothing\}.
\]
We note that if $Q$ is a tiling, the support of $[S]^Q$ contains $S$.  Sometimes we may abuse notation and put a tile or patch in place of $S$, in which case it is understood to mean the $Q$-patch intersecting its support.

A very important form of equivalence between tilings with (potentially) different prototile sets is mutual local derivability.

\begin{definition} We say a tiling $T'$ is {\em locally derivable} from a tiling $T$ if there is an $R>0$ such that if $[B_R(x)]^T = [B_R(y)]^T+z$ then $[x]^{T'}=[y]^{T'}+z$.  If both $T$ and $T'$ are locally derivable from each other then we say they are {\em mutually locally derivable (MLD)}.
\end{definition}

{\em Remark on notation and assumptions.}  Throughout this paper, unless otherwise noted, reference to a tiling $T$ implies the presence of a finite prototile set $\pset$ that will only be mentioned explicitly if there is danger of confusion. Tilings will always be assumed to have finite local complexity.

\subsection{Tiling substitutions and self-similar tilings}

\begin{definition}
A function $\omega: \pset \to \pset^*$ is called a {\em tiling substitution} if there exists $\lambda > 1$ such that for every $\ptile \in \pset$, 
\[
\lambda \supp(\ptile) = \supp(\omega(\ptile)).
\]
In this case $\lambda$ is called the {\em expansion factor} of the substitution. \label{subs_def}
\end{definition}

It is natural to extend the substitution $\omega$ to tiles, patches, and tilings.  The substitution of a tile $t=\ptile + x$, for $\ptile \in \pset$ and $x \in \R^2$, is the patch $\omega(t):=\omega(p) + \lambda x$.  The substitution of a patch or tiling is the substitution applied to each of its tiles. Substitution rules naturally give rise to tilings by constructing a self-similar tiling as described in \cite[p.13]{Lorenzo.book}.

An alternative approach to substitution tilings is to consider a self-similar tiling, and then find the substitution $\omega$ it determines. In order to define a self-similar tiling we require some notation.  The boundary of a tiling $T$, denoted $\partial T$, is the subset of $\R^2$ given by the boundaries of all the supports of tiles in $T$, i.e. $\partial T := \bigcup_{t \in T} \partial(\supp(t))$.

\begin{definition}
A tiling $T$ is \emph{self-similar} if
\begin{enumerate}
\item there exists some $\lambda > 1$ such that $\lambda \partial T \subset \partial T$; and
\item if $t_1,t_2 \in T$ are translationally equivalent, then the patches enclosed by $\lambda \supp(t_1)$ and $\lambda \supp(t_2)$ are translationally equivalent.\label{sstcond2}
\end{enumerate}
\end{definition}

The central patch of a self-similar tiling for the 2DTM substitution is shown in Figure \ref{TM-self-similar} with the origin of $\R^2$ marked at the center. It is routine to check that the same patch sits at the centre of a substituted version of this patch, and so on. Thus, we can extend this patch to a tiling by substituting an infinite number of times. Since the (ever expanding) central patch is always invariant under subsequent substitutions, we obtain a self-similar tiling of the plane. We call this particular tiling a self-similar version of the 2DTM tiling.
\begin{figure}[ht]
\includegraphics[width=4cm]{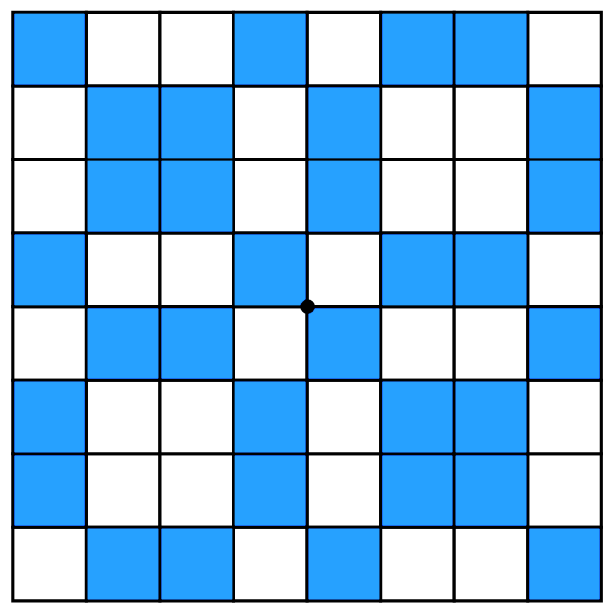}
\caption{A portion of a self-similar 2DTM tiling, with the origin at the center.}
 \label{TM-self-similar}
\end{figure}

To extract $\omega$ from a self-similar tiling $T$, for each prototile $\ptile$ find any equivalent tile $t = \ptile + x \in T$, then define $\omega(\ptile)$ to be the patch with support $\lambda \supp(t)$ translated by $-\lambda x$. For the self-similar version of the 2DTM tiling, notice that the white prototile is sitting in the unit square in the first quadrant, its substitution is sitting at the $4 \times 4$ square in the first quadrant, its second substitution will be the $16 \times 16$ square in the first quadrant, and so on.

\section{The combinatorics of tilings}\label{comb_defn_sec}

Key to our methodology is the connection between the combinatorics of tilings and their geometry. In this section we collect the definitions necessary for our construction.

\subsection{Combinatorial and geometric graphs}
A \emph{combinatorial graph} $K$ consists of a set $V(K)$, whose elements we call {\em vertices}, and a set $E(K)$, whose elements we call {\em edges}.   Each edge in $E(K)$ is defined to be an unordered pair of vertices; we call these vertices the {\em endpoints} of that edge.  
The {\em degree} of a vertex is the number of edges for which it is an endpoint, and a  \emph{dangling} vertex is a vertex of degree $1$.  In this paper we will never need to consider vertices with degree 0.

Following Gross and Tucker \cite{GT}, we define the {\em topological realization} $\widetilde K$  of a combinatorial graph $K$ as follows.  Each edge of $K$ is identified with a copy of $[0,1]$, where $0$ represents one endpoint vertex of the edge and $1$ represents the other.  
Whenever two edges share a common vertex we identify the appropriate endpoints of their copies of $[0,1]$.  The end result is a topological space $\widetilde K$ that encodes the combinatorics of $K$.

It is essential for our purposes to visualize combinatorial graphs in the plane by their topological realizations.  Intuitively, we to do this by first choosing a point set in the plane in one-to-one correspondence with $V(K)$.  Then any pair of points whose associated vertices  make up an edge are connected by a Jordan arc.

\begin{definition}\label{equiv_graphs}
A  \emph{geometric graph} $G$ is an embedding of the topological realization $\widetilde{K}$ of a combinatorial graph $K$ into the plane.  Let $\iota_G$ denote the embedding map. We say that two geometric graphs $G,H$ are \emph{equivalent} if they are embeddings of the same combinatorial graph, and write $G \sim H$. 
\end{definition}  

Combinatorial graphs that can be embedded in the plane are often called \emph{planar} or \emph{plane graphs}, and we call their embeddings geometric graphs. The combinatorial graph that gives rise to a given connected geometric graph $G$ is not unique because degree two vertices cannot be detected.  However, there is always a unique combinatorial graph $K$ with no vertices of degree two for which $G = \iota_G(\widetilde K)$.  This makes the following definition of the edge set $E(G)$ quite natural.

\begin{definition}
Let $G$ be a geometric graph and $K$ a combinatorial graph with no vertices of degree two such that $\iota_G(\widetilde{K})=G$, where $\widetilde{K}$  is the topological realization of $K$.  An {\em edge (resp. vertex)} of $G$ is the image under $\iota_G$ of the topological realization of an edge (resp. vertex) in $K$.
\end{definition}

\subsection{Tilings as geometric graphs} \label{subsec:tilinggraphs}

A tiling $T$ in the plane gives rise to a canonical geometric graph: each point at which three or more tiles meet represents a vertex, and any arc along which two tiles meet represents an edge.   
 Although this combinatorial graph is quite natural, it can cause problems because ideally, prototiles would have well-defined edges and vertices that carry throughout $T$.  However, there are the tilings, such as the chair tiling, whose prototiles will not have a well defined vertex set unless either the prototile set or the vertex set is enlarged to account for them.  Thus we choose to make a more subtle definition for the graph of a tiling.

Suppose $T$ is a tiling with finite local complexity and prototile set $\pset$ with boundary $\partial \pset$.   We form an equivalence relation on $\partial \pset$ by $a \sim_T b$ if there exists $x,y \in \R^2$ and $p,q \in \pset$ such that $p+x, q+y \in T$ and $a+x = b+y$.  A set $F \subset \partial\pset$ is \emph{$T$-consistent} if it is a union of equivalence classes of $\sim_T$.  

\begin{definition} A {\em natural vertex} of $T$ is a point at which three or more tiles meet; the natural vertices of $\pset$ are their representatives on the prototile set. The {\em vertex set of $\pset$} is the $T$-consistent set of points generated by the natural vertices of $\pset$.  An {\em edge} in $\ptile\in \pset$ is a Jordan arc connecting a pairs of neighboring vertices along the boundary of $\ptile$. 
\end{definition}

In this way the boundary of each prototile is associated with a combinatorial and geometric graph.
In Figure \ref{chair.vertices} we show how the vertex set for a chair tile is determined in this perspective.

\begin{figure}[ht]
	\includegraphics[width=3cm]{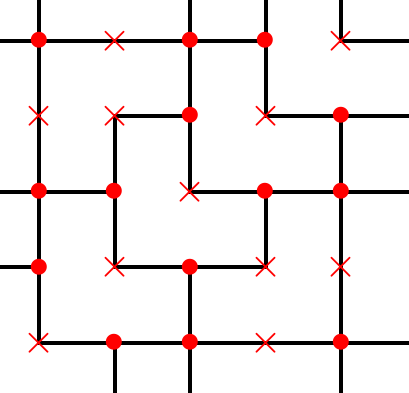}
	\caption{A patch from the third substitution of the chair tiling (see Example \ref{Chair tiling example}). The natural vertices marked with a \tcr{$\bullet$}, and the vertices in the $T$-consistent set generated by the natural vertices are marked with an \tcr{$\times$}.}
    \label{chair.vertices}
\end{figure}

We use this definition of prototile edges and vertices to define a canonical combinatorial graph $K_T$ associated with $T$.  Since every tile in $T$ is a translation of a prototile, the edges and vertices of the tile are inherited from the prototile by translation.  Since the edges and vertices of tiles are embedded on the boundaries of the tiles, they appear in the supports of more than one tile. To define the edge and vertex sets of $K_T$ we include only one vertex and one edge from any given pair of adjacent tiles.  This combinatorial graph is conveniently equipped with the embedding $\iota_T$ that places the vertices and edges where they came from in the first place; the image of $\iota_T$ is $\partial T$, the boundary of $T$. We note that if $T$ and $T'$ are translates of one another, then $K_T$ and $K_{T'}$ are graph isomorphic.

\begin{definition}
A tiling is defined to be {\em edge-to-edge} if any two tile edges intersect either completely, at a common vertex, or not at all.  A tiling is defined to be {\em singly edge-to-edge} provided any two tiles intersect along at most one edge. 
\end{definition}

If $T$ is a FLC tiling whose edges and vertices come from the edges and vertices of $\partial T$, then $T$ is automatically edge-to-edge.  The chair tiling is an instructive example; by adding vertices, as in Figure \ref{chair.vertices}, we have ensured that tile edges always line up.  However, doing so means that chair tilings are no longer singly edge-to-edge.

\subsection{Geometric graphs on prototiles and the tilings they induce}
A key construction used in this work is to make new tilings from a given tiling $T$ by marking all its tiles in a specified way.  This process of turning old tilings into new ones by carving up the prototile set is what is referred to as {\em recomposition} in \cite{GS}. Our method for doing this is by embedding geometric graphs into the supports of the prototiles, then extending to all the tiles of $T$.  

\begin{definition}\label{geometric graph}
A {\em geometric graph $G$ on a prototile set $\pset$} is the disjoint union $\disjoint G_p$ of  finite geometric graphs such that $G_\ptile \subset \supp (\ptile)$ for all $\ptile \in \pset$.   We denote by $K_p$ the underlying combinatorial graph of $G_p$ that has no degree 2 vertices; we call its embedding map $(\iota_G)_p$.
A vertex of $G_p$ that is contained on the boundary of $\supp(p)$ is called a \emph{boundary vertex} of $G$ and a vertex contained in the interior of $\supp(p)$ is called an \emph{interior vertex}.
\end{definition} 

We now introduce a condition ensuring that edges of geometric graphs always meet when we translate prototiles to make a tiling.

\begin{definition} \label{consistent}
Suppose $T$ is a FLC tiling and let $G$ be a geometric graph on its prototile set $\pset$.  We say that $G$ is \emph{$T$-consistent} if 
\begin{enumerate}[(i)]
\item\label{T-consistent graph vertices}  for all $p \in \pset$, $G_p$  only intersects the boundary of $\supp(p)$ at boundary vertices and
\item\label{T-consistent graph bdary} the boundary vertices of $G$ form a $T$-consistent set.\footnote{For readers familiar with the Anderson-Putnam complex \cite{AP} of a tiling space, a $T$-consistent embedding is a geometric graph $G$ on the Anderson-Putnam complex with no dangling vertices.}
\end{enumerate}
\end{definition}

The next definition gives sufficient (but not necessary) conditions on $G$ so that it induces a tiling $T_G$ by recomposition.

\begin{definition} \label{quasi}
Suppose $T$ is a FLC tiling and let $G$ be a $T$-consistent geometric graph on its prototile set $\pset$. We say that $G$ is a {\em quasi-dual graph} if
\begin{enumerate}[(i)]
\item\label{T-consistent graph tree} the graph $G_p$ is a connected tree for all $p \in \pset$, 
\item\label{T-consistent graph edge interior} the interior of each prototile edge contains exactly one boundary vertex of $G$, situated on the interior of that prototile edge, and
\item\label{T-consistent graph danglers} every interior vertex of $G$ has degree at least 3.
\end{enumerate}
If $G$ is a quasi-dual graph such that every prototile contains exactly one interior vertex then we call $G$ a \emph{dual} graph.
\end{definition}

An example of a dual graph for the 2DTM tiling is given on the left hand side of Figure \ref{2DTM_graph_sub} of the introduction. 

A $T$-consistent graph $G$ on $\pset$ extends to a graph in $\R^2$ as follows.  For each $t \in T$, denote by $x_t$ the translation vector of $t$ for which $t = \ptile + x_t$ for its $\ptile \in \pset$. Then we define
\begin{equation}
\Gamma(T,G) = \bigcup_{\ptile \in \pset} \bigcup_{\{t\in T : t \text{ is of type } \ptile\}} \left(G_\ptile+x_t \right) \label{graph_from_prototiles}
\end{equation}
This is a union of Jordan arcs that can also be seen as a geometric graph in $\R^2$.   It has no dangling vertices since $G$ is $T$-consistent; the only way it can fail to be a tiling is if some of the Jordan arcs do not close up to become Jordan curves.

An {\em empty} Jordan curve in $\Gamma(T,G)$ is a Jordan curve that contains no portion of $\Gamma(T,G)$  in its interior. In the following Lemma we define labelled empty Jordan curves to be the tiles of a new tiling induced by a $T$-consistent graph.

\begin{lemma}\label{tiling_from_graph}
Suppose $G$ is a $T$-consistent graph such that each arc in $\Gamma(T,G)$ is part of an empty Jordan curve.   Then $\Gamma(T,G)$ is the boundary of a tiling $T_G$ such that every empty Jordan curve is the boundary of a tile in $T_G$.
\end{lemma}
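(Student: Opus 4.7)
The plan is to construct $T_G$ directly from the empty Jordan curves in $\Gamma(T,G)$, then verify the tiling axioms via a topological argument relying on the Jordan curve theorem. For each empty Jordan curve $J$, let $D_J := J \cup \operatorname{int}(J)$ be the closed topological disk it bounds, and set $T_G := \{D_J : J \text{ is an empty Jordan curve in } \Gamma(T,G)\}$, with each $D_J$ labelled by its translation class. Finiteness of the resulting prototile set will follow from the FLC of $T$ together with the finiteness of each graph $G_\ptile$, since the local structure of $\Gamma(T,G)$ in any bounded ball depends only on the corresponding $T$-patch, of which there are finitely many translation types.

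The central step is the partition claim: every connected component $C$ of $\R^2 \setminus \Gamma(T,G)$ equals $\operatorname{int}(J)$ for a unique empty Jordan curve $J$. Fix such a $C$ and pick any arc $a \subseteq \partial C$; by hypothesis there exists an empty Jordan curve $J$ containing $a$. Because $\operatorname{int}(J) \cap \Gamma(T,G) = \varnothing$, the set $\operatorname{int}(J)$ lies in a single component $C'$ of $\R^2 \setminus \Gamma(T,G)$, and matching the sides of $a$ yields $C = C'$, hence $\operatorname{int}(J) \subseteq C$. For the reverse inclusion, if some $y \in C \setminus \overline{\operatorname{int}(J)}$ existed, path-connectedness of $C$ would produce a path from $y$ to a point of $\operatorname{int}(J)$ staying in $\R^2 \setminus \Gamma(T,G)$; but the Jordan curve theorem forces this path to cross $J \subseteq \Gamma(T,G)$, a contradiction. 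Uniqueness is immediate, since if two distinct empty Jordan curves had overlapping interiors, one curve would lie in the interior of the other, contradicting the emptiness hypothesis.

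With the partition claim established, the covering condition $\bigcup_J D_J = \R^2$ and the packing condition follow routinely: every point of $\R^2$ either lies on $\Gamma(T,G) = \bigcup_J \partial D_J$ or in a unique $\operatorname{int}(D_J)$, and interiors of distinct components of $\R^2 \setminus \Gamma(T,G)$ are pairwise disjoint. By construction $\Gamma(T,G) = \partial T_G$, and each empty Jordan curve is the boundary of the corresponding tile. The main obstacle is the topological argument for the partition claim, in particular the reverse inclusion $C \subseteq \operatorname{int}(J)$, which is where the Jordan curve theorem plays its decisive role; a secondary concern is ensuring the prototile set is finite, which requires an FLC-based argument that empty Jordan curves have uniformly bounded diameter.
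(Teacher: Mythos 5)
Your proof is correct in substance but takes a genuinely different route from the paper's. The paper's proof spends essentially all of its effort on the prototile set: it defines a label set $\LL$ consisting of the translation classes of $T$-patches $[J+x]^T$ met by empty Jordan curves, invokes FLC of $T$ to conclude $\LL$ is finite, takes one representative disk per class as a prototile, and then simply \emph{asserts} that the union of the tiles cut out by the empty Jordan curves is a tiling. You invert the emphasis: you treat the finiteness of the prototile set as a routine FLC remark and put the real work into the topological verification that the disks $D_J$ actually cover and pack the plane, via the partition claim that every component of $\R^2 \setminus \Gamma(T,G)$ is the interior of a unique empty Jordan curve. This buys a genuinely more complete argument --- the covering condition is exactly what the paper's proof leaves unverified --- at the cost of needing the Jordan curve theorem explicitly.

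One step deserves a flag: in the forward inclusion you write that ``matching the sides of $a$ yields $C = C'$.'' The hypothesis as literally stated supplies only \emph{one} empty Jordan curve through each arc, and nothing forces its interior to lie on the $C$-side of $a$; if every empty Jordan curve through the arcs of $\partial C$ had its interior on the opposite side, your argument would not produce a curve with $\operatorname{int}(J) = C$ (and indeed a degenerate $T$-consistent graph consisting of one small circle per tile satisfies the stated hypothesis while leaving an unbounded complementary component that is no tile). So your side-matching step is silently using the stronger --- and clearly intended --- reading that each arc bounds an empty Jordan curve on \emph{each} of its two sides, which does hold for the quasi-dual graphs to which the lemma is actually applied. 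Since the paper's own proof does not address this point at all, this is less a defect of your argument than a place where you should state explicitly which form of the hypothesis you are using. The uniqueness argument and the deduction of covering and packing from the partition claim are fine.
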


\begin{proof}
We begin by constructing a prototile set $\pset_G$ which we will use to define the tiling $T_G$.
A label set $\LL$ is defined by
\[
\LL:=\{[J+x]^T : J \text{ is an empty Jordan curve and } J+x \in \Gamma(T,G)\}/ \sim,
\]
where $\sim$ denotes translational equivalence on patches of tiles in $T$. Since $T$ has FLC the set $\LL$ is finite. A set of prototiles $\pset_G$ is defined by taking the closed set bounded by a representative from each translational equivalence class of empty Jordan curves for each label in $\LL$. Then each empty Jordan curve in $\Gamma(T,G)$ uniquely defines a tile that is a translation of a prototile in $\pset_G$. The union of all the tiles defined by empty Jordan curves in $\Gamma(T,G)$ defines a tiling $T_G$, as required.
\end{proof}

If $G$ is a quasi-dual graph on a substitution tiling $T$, then the hypotheses of Lemma \ref{tiling_from_graph} are satisfied and $T_G$ is a tiling. We call tilings arising from quasi-dual graphs \emph{quasi-dual tilings}. Moreover, if $G$ is a dual graph, then $T_G$ is a labelled combinatorial dual of $T$.

\begin{prop}\label{quasi-dual tiling} Suppose $T$ is a tiling with FLC and $G$ is a quasi-dual graph on $\pset$, then the quasi-dual tiling $T_G$ has FLC and is mutually locally derivable to $T$.  Moreover, the vertex patterns in $T$ are in one-to-one correspondence with the tiles in $T_G$.
\end{prop}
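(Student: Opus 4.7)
The plan is to identify each tile of $T_G$ with the connected component of $\R^2\setminus\Gamma(T,G)$ surrounding a vertex of $T$, and then to read off FLC and MLD from that geometric description.

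First, I will analyze a single prototile $p$. By condition~(\ref{T-consistent graph edge interior}) of Definition~\ref{quasi}, $G_p$ is a tree with exactly one boundary vertex on the interior of each of the $B_p$ edges of $p$. Viewing $G_p\cup\partial\supp(p)$ as a plane graph on the sphere (with the $B_p$ corners of $p$ as additional vertices), a short Euler characteristic computation gives $F=B_p+1$ faces; discarding the exterior face, $\supp(p)\setminus G_p$ has exactly $B_p$ ``corner regions,'' one around each corner of $p$, each bounded by a boundary arc (containing precisely that corner, since boundary vertices sit on edge interiors) together with a simple path in the tree $G_p$.

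Next, I will glue these corner regions across the tiling. At a vertex $v$ of $T$, let $t_1,\dots,t_n$ be the tiles meeting $v$ in cyclic order, and let $\gamma_i\subset G_{t_i}$ be the tree-path boundary of the corner region of $t_i$ at $v$. The $T$-consistency of the boundary vertex set (Definition~\ref{consistent}(\ref{T-consistent graph bdary})) guarantees that the endpoint of $\gamma_i$ on the edge shared by $t_i$ and $t_{i+1}$ coincides with the corresponding endpoint of $\gamma_{i+1}$, so the $\gamma_i$ chain into a closed curve $\Lambda_v$. The key technical step is to verify that $\Lambda_v$ is a simple Jordan curve bounding an \emph{empty} region around $v$: simplicity follows because each $\gamma_i$ is an arc in a tree, distinct $\gamma_i$'s lie in disjoint tile interiors away from their shared endpoints, and the cyclic order around $v$ rules out further crossings; emptiness follows because the interior of $\Lambda_v$ is the union of the open corner regions at $v$ together with the tile-edge segments running from $v$ out to the boundary vertices of $G$ on the edges at $v$, all of which lie in $\R^2\setminus\Gamma(T,G)$.

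I will then check the converse: every empty Jordan curve in $\Gamma(T,G)$ has the form $\Lambda_v$ for some vertex $v$ of $T$. For a point in the enclosed region, the tile $t\in T$ containing it places the point in one of $t$'s corner regions, whose associated corner $v$ must lie in the same component of $\R^2\setminus\Gamma(T,G)$; the previous paragraph then identifies that component with the interior of $\Lambda_v$. Lemma~\ref{tiling_from_graph} thus produces a bijection between tiles of $T_G$ and vertices of $T$, and since $\Lambda_v$ is determined up to translation by the pattern of tiles at $v$, this descends to a bijection between prototiles of $T_G$ and translational equivalence classes of vertex patterns of $T$. FLC of $T_G$ follows immediately: any two-tile patch of $T_G$ arises from the finite patch of $T$ around an adjacent pair of vertices, of which FLC of $T$ produces only finitely many types. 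For mutual local derivability I take $R$ larger than the diameter of every vertex pattern of $T$; then $[B_R(x)]^T$ determines every $\Lambda_v$ intersecting a neighborhood of $x$ and hence $[x]^{T_G}$, while conversely each tile of $T_G$ is labelled by the translational class of its vertex pattern in $T$, so $[B_R(x)]^{T_G}$ determines $[x]^T$. The main obstacle throughout is the topological bookkeeping around each vertex $v$---specifically, confirming that the concatenated tree paths really do close up into a simple Jordan curve enclosing only that vertex---which relies essentially on the tree condition~(\ref{T-consistent graph tree}), the one-boundary-vertex-per-edge condition~(\ref{T-consistent graph edge interior}), and the $T$-consistency of the boundary vertex set; once this is secured, the remaining claims are essentially formal.
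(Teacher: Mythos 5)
Your proposal is correct and follows essentially the same route as the paper: the paper's proof also builds the tile of $T_G$ at a vertex $v$ by concatenating, around $v$ in cyclic order, the unique tree-paths in $\Gamma(T,G)$ joining consecutive boundary vertices (your $\gamma_i$), uses the tree condition to rule out additional tiles, and derives MLD from the correspondence between vertex patches of $T$ and labelled tiles of $T_G$. Your Euler-characteristic count of corner regions and the explicit Jordan-curve verification are just a more detailed writeup of the same argument.
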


\begin{proof}
Let $v$ be a vertex in $T$.  Consider the edges emanating from $v$ in clockwise order; there is a unique path through $\Gamma(T,G)$ connecting each edge to the next by conditions \eqref{T-consistent graph tree} and \eqref{T-consistent graph edge interior} of Definition \ref{quasi}. These edges form the boundary of the tile in $T_G$ that corresponds to $v$.  Moreover, Definition \ref{quasi}\eqref{T-consistent graph tree} guarantees that there can be no additional tiles in $T_G$.

Mutual local derivability follows by the fact that vertex patches in $T$ give rise to tiles in $T_G$ while tiles in $T_G$ specify their corresponding patches in $T$ by their label.
\end{proof}


 
\section{Alternate views of tiling substitutions}\label{alt_view_sub_sec}
 
In this section we describe tiling substitutions using digit sets and digit matrices.  We then use digit sets to define a contraction map on substitution tilings, which turns out to be essential for our construction.  At the end of the section we calculate the digit matrix and contraction map for the chair substitution.  


\subsection{Matrices associated with tiling substitutions.} We follow the method of Moody and Lee \cite[Section 2]{LM}.  Consider a fixed tiling substitution $\omega$ on prototile set $\pset$ with expansion factor $\lambda$ as in Definition \ref{subs_def}.  Suppose $|\pset|=m$ and that when $\pset$ is used as an index set for the rows and columns of a matrix we keep some fixed order.

\begin{definition}
The {\em substitution matrix} of $\omega$ is the $m \times m$ matrix $M$ indexed by $\pset$ whose $(p,q)$-entry is equal to the number of copies of the prototile $q$ occurring in $\omega(p)$. 
\end{definition}

The substitution matrix contains basic information about $\omega$. 
A matrix that keeps more geometric information about the substitution is the \emph{digit matrix} $D = (D_{pq})$, which contains the translation vectors required to implement the tiling substitution. The patch given by $\omega(p)$ is a collection of translates of the prototiles $q\in\pset$, where the number of copies of each $q$ is $M_{pq}$.  The translation vectors for $q$ in $\omega(p)$ are denoted $d_{pq}^1, d_{pq}^2, ..., d_{pq}^{M_{pq}}$ and  we define
the entries of the digit matrix $D$ to be these sets of translation vectors:
 \[
 D_{pq} = 
 \begin{cases}
 \{d_{pq}^k \in \R^2 : k= 1,\dots, M_{pq}\} &\text{if $M_{pq} \neq 0$, and}\\
  \varnothing &\text{ if $M_{pq} = 0$}
 \end{cases}
 \]
 
We will use $D$ to translate both subsets of $\R^2$ (especially geometric graphs on $\pset$) and prototiles $q$ as follows.  For $S \subset \R^2$ and $D_{pq} \neq \varnothing$ define $S + D_{pq} := \bigcup_{k=1}^{M_{pq}} (S + d_{pq}^k)$.  If $D_{pq}= \varnothing$ we take the convention that $S + \varnothing = \varnothing$. For a tile $q$ and $D_{pq} \neq \varnothing$, we define $q + D_{pq}$ to be the set of tiles $\{q+ d_{pq}^k  : k \in 1, 2, ..., M_{pq}\}$, and again use the convention that $q + \varnothing = \varnothing$. 
With this notation we can write
\begin{equation}
\omega(p) = \bigcup_{q \in \pset} \left( q + D_{pq}\right).
\label{omega_with_digits}
\end{equation}

\subsection{Tiling contraction maps}

In this section we will define a tiling contraction map $\rrule$. When restricted to prototiles we want $\rrule$ to be equal to $\lambda^{-1}\omega$, which subdivides each prototile into subtiles rather than inflating and then subdividing. However, we want $\rrule$ to be a more general map and we will see that the domain of $\rrule$ plays an important role in its definition. In particular, we would like $\rrule$ to be a `shrink-and-replace' rule on patches of tiles and on disjoint unions of compact subsets of $\R^2$, though our prototypical application is to geometric graphs embedded in tiles. We will use the notation $\rrule$ regardless of what type of object is being acted on, with the understanding that the output is always the same type as the input.

Before defining $\rrule$ we will define its domain. In order to define the domain of $\rrule$ we need to decide whether we want it to act on disjoint unions of compact subsets of $\R^2$ or on patches of scaled tiles.  For the former, let $H(\R^2)$ denote the set of nonempty compact subsets of $\R^2$ and define the domain of $\rrule$ to be $\domain:=\disjoint H(\R^2)$. The latter is slightly trickier. Given a prototile set $\pset$, consider the set of all nonempty patches of translated prototiles, denoted  $\pset^*$. For any $0 < \kappa \leq 1$ the set $\kappa \pset^*$ is the set of all nonempty patches of prototiles scaled by $\kappa$. When we want $\rrule$ to act on scaled patches of tiles the domain is then $\displaystyle{X_\pset := \bigcup_{0< \kappa \leq 1} \disjoint \kappa\pset^*}$. 


\begin{definition}  The \emph{tiling contraction map} $\rrule$ is defined as follows. Let  $B = \disjointelt B_p$ be an element of $\domain$ or $X_\pset$. For each $p \in \pset$ let
\begin{equation}
\rrule(B)_p =\bigcup_{q \in \pset} \lambda^{-1} \left(B_q + D_{pq} \right),
\label{rrule_def}
\end{equation}
and then $\rrule(B) = \disjointelt \rrule(B)_p$.
\end{definition}

Since $\rrule$ has the same definition regardless of the domain we now illustrate the difference. The first case is if $B \in X$, and then $\rrule(B) \in X$ as well. By comparing equation (\ref{rrule_def}) to equation \eqref{omega_with_digits} we see that if $B=\disjointelt \, \supp (p)$ then we have
\[
\rrule(B)=\disjoint \bigcup_{q \in \pset} \lambda^{-1}(\supp(q) + D_{pq}) = \disjoint \supp(p)=  B.
\]
Thus the disjoint union of prototile supports is fixed under the action of $\rrule$ and is therefore the attractor of the dynamical system $(X,\rrule)$. In the special case that $G$ is a geometric graph on $\pset$, then $\rrule(G)$ is also a geometric graph on $\pset$.  If $G$ is $T$-consistent then $\rrule(G)$ will be $T$-consistent with no dangling vertices in its interior.  Moreover, if $G$ is quasi-dual then $\rrule(G)$ will be connected but will have too many boundary vertices to be quasi-dual itself. In the next section we will be selecting subgraphs of $\rrule(G)$ to define recurrent pairs.

On the other hand, if $B \in X_\pset$, then $\rrule(B)$ may fail to be in $X_\pset$. This can happen when the patches that make up $\rrule(B)_p$ have overlapping interiors. To deal with this annoyance we will restrict our attention to those $B$ for which $\supp(B_p) = \supp(p)$ for all $p \in \pset$, in which case both $\rrule(B)$ and $\rrule^n(B)$ are elements of $X_\pset$ for all $n \in \N$.  With a slight abuse of terminology we write $\rrule^n(\pset)$ for $\rrule^n(\disjointelt \, p)$. Then $\rrule(\pset)_p = \lambda^{-1}\sub(p)$ for all $p \in \pset$.  We call a tile in $\rrule(\pset)_p$ a {\em subtile}; and $\rrule$ can be applied repeatedly so that $\rrule^n(\pset)$ consists of patches of $\pset$-tiles scaled by $\lambda^{-n}$ that we call {\em $n$-subtiles}.

\begin{example}[The Chair tiling]\label{Chair tiling example}
Let $\pset$ be the set of four prototiles with long side length $1$ and the origin in the corner marked by the dot, as depicted in Figure \ref{chair_prototiles}. These are the prototiles of the Chair tiling.
\begin{figure}[ht]
\[
\begin{tikzpicture}[scale=0.8]
\foreach \x in {1,2,3,4}
	{
		\chaird{2*\x}{0}{90*(1-\x)}{p_\x}
	}
\end{tikzpicture}
\hspace{2cm}
\begin{tikzpicture}[scale=0.8]
\chaird{0}{0}{0}{p_1}
\begin{scope}[xshift=3cm]
\chaird{0}{0}{0}{p_1}
\chair{1/2}{1/2}{0}{p_1}
\chair{0}{1}{-90}{p_2}
\chair{1}{0}{-270}{p_4}
\end{scope}

\node (1.5,1/2) {}
	edge[->] node[auto] {} (2.5,1/2);
\end{tikzpicture}
\]
\caption{The $4$ prototiles of the chair tiling and the substitution of prototile $p_1$.}
\label{chair_prototiles}
\end{figure}
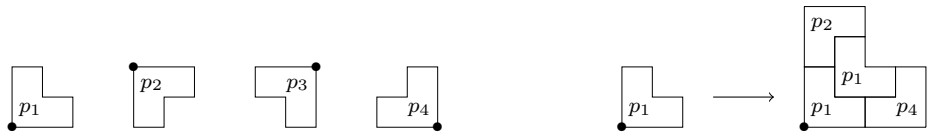
The expansion factor for the chair substitution is $\lambda = 2$, and the substitution of prototile $p_1$ also appears in Figure \ref{chair_prototiles}; the substitution of each of its three rotations are just the rotation of this substitution about the origin.
The digit matrix for the Chair substitution is 
\[
D = 
\begin{pmatrix}
\big\{(0,0),(\frac12, \frac12)\big\} & \{(0,2)\} & \varnothing & \{(2,0)\} \\
\{(0,-2)\} & \big\{(0,0),(\frac12, -\frac12)\big\} & \{(2,0)\} & \varnothing  \\
\varnothing & \{(-2,0)\} & \big\{(0,0),(-\frac12, -\frac12)\big\} &  \{(0,-2)\} \\
\{(-2,0)\} & \varnothing &  \{(0,2)\} & \big\{(0,0),(-\frac12, \frac12)\big\} 
\end{pmatrix}.
\]
Thus, for $B=\sqcup_{i=1}^4 B_i$ in $X$ or $X_\pset$, the tiling contraction map $\rrule$ is given by
\[
\rrule \begin{pmatrix}B_1 \\ B_2 \\ B_3 \\ B_4\end{pmatrix}
= \frac12 \begin{pmatrix}
B_1 \cup \big(B_1 + (\frac12,\frac12) \big) \cup \big(B_2 + (0,2)\big) \cup \big(B_4 + (2,0)\big) \\
\big(B_1 + (0,-2)\big) \cup B_2 \cup \big( B_2 +(\frac12,-\frac12) \big) \cup \big(B_3 + (2,0)\big) \\
\big(B_2 + (-2,0)\big) \cup B_3 \cup \big(B_3 + (-\frac12,\frac12)\big) \cup \big(B_4 + (0,-2)\big) \\
\big(B_1 + (-2,0)\big) \cup \big(B_3 + (0,2)\big) \cup B_4 \cup \big(B_4 + (-\frac12,\frac12)\big)
\end{pmatrix}.
\]
\end{example}

\section{Constructing fractal tilings from self-similar tilings}\label{sec:psi}

In this section we construct fractal substitution tilings. Our method uses a self-similar tiling to construct the fractal tiling explicitly. We use this method rather than standard techniques from fractal geometry because we obtain more information about the relationship between the the original tiling and a fractal realization. In particular, our techniques allow for the possibility of experimentation and we are able to pinpoint a necessary condition for a recurrent pair on a substitution tiling to give rise to a fractal substitution tiling.

Here is a brief overview of the method. Suppose $T$ is a fixed self-similar tiling with prototile set $\pset$ and substitution map $\omega$.  Given a geometric graph $G$ on $\pset$ satisfying the consistency conditions of Definition \ref{consistent}, we let $\tzero:= T_G$ be the tiling defined in Definition \ref{tiling_from_graph} obtained by embedding $G$ in all the tiles of $T$. We then select a geometric graph $S \subset \rrule(G)$ with the same combinatorics as $G$, forming a recurrent pair $(G,S)$.  The map taking $G$ to $S$ acts as a substitution rule on edges, and we define the map $\psi$ to be the extension of the substitution to the boundary of $\tzero$ and obtain a tiling $T^{(1)}:=\psi(T^{(0)})$.  Assuming certain injectivity conditions on the recurrent pair $(G,S)$ and iterating this process leads to a limiting tiling $\Tinf$, which is also a self-similar tiling.


%

\subsection{Recurrent pairs and edge refinement maps}

Recall that two geometric graphs $B$ and $C$ are said to be equivalent, and we write $B \sim C$,  provided there exists a combinatorial graph $K$ and embeddings $\iota_B$ and $\iota_C$ of the topological realization of $K$ such that $\im(\iota_B)=B$ and $\im(\iota_C)=C$.  In this case there is a homeomorphism between $B$ and $C$ given by $\iota_C \circ\iota_B^{-1}$.

\begin{definition}
A pair of $T$-consistent geometric graphs $(G,S)$ on $\pset$ is said to be a \emph{recurrent pair} for $(T,\sub)$ if the following conditions hold:
\begin{enumerate}[(i)]
\item $S \subset \rrule^N(G)$ for some $N \in \N$,
\item for all $p \in \pset$, $G_p \sim S_p$ with homeomorphism denoted by $\widetilde \psi_p : G_p \to S_p$.
\item \label{recurrent iii}$v$ is a boundary vertex of $G_p$ if and only if $\widetilde \psi_p(v)$ is a boundary vertex of $S_p$, in which case both lie in the same edge of $p$.
\end{enumerate}
\end{definition}

Notice also that we can always take $N=1$ in the definition of a recurrent pair by replacing $\rrule$ with $\rrule^N$. So after we find a recurrent pair $(G,S)$ we will typically assume $S \subset \rrule(G)$.

Figure \ref{2DTM_graph_sub} of the introduction shows how a recurrent pair for the 2DTM substitution is constructed. The leftmost images show the prototiles $\alpha$ and $\beta$ with dual graphs $G_\alpha$ and $G_\beta$ inscribed. The disjoint union of $G_\alpha$ and $G_\beta$ is the graph $G$.  The images labelled $\rrule(G)_\alpha$ and $\rrule(G)_\beta$ show the graph $G$ inscribed in each tile of the substitution.  The images labelled $S_\alpha$ and $S_\beta$ show the selection of a graph $S$ that makes $(G,S)$ a recurrent pair for the 2DTM substitution.  

Notice that 
in a recurrent pair, since $S \subset \rrule(G)$, an edge of $S$ is made up of a union of edges from $G$ rescaled by $\lambda^{-1}$,
and $S$ passes through many vertices of $\rrule(G)$ that do not affect the mutual underlying combinatorial graph $K$ since they are of degree 2. There is a very meaningful sense in which we think of $S$ as being the substitution of the graph $G$; describing how to iterate the substitution to produce fractals is our next task.\footnote{We will make this definition with the aid of our self-similar tiling $T$; a different but equivalent formulation can be made using the prototile set $\pset$ only.}

In all that follows, we suppose that when $G$ is drawn in all the tiles of $T$, 
it forms the boundary of a tiling called $T_G$, defined by Lemma \ref{tiling_from_graph}, which we take to be our initial tiling $\tzero$.  Since the boundary graph of $\tzero$ consists of translated elements of $G$ we can
 extend the $\widetilde\psi_p$ maps to a map $\psi: \partial \tzero \to \R^2$ that redraws an edge of $\tzero$ the same way $\widetilde\psi_p$ redrew it in its corresponding prototile. That is, if $z \in \partial \tzero$ is in the support of the tile $t = p + x \in T$ for  $p \in \pset$ and $x\in\R^2$, then $\psi(z) = \widetilde\psi_p(z-x)+x$.   
 
\begin{lemma}
The map $\psi: \partial \tzero \to \R^2$ is a homeomorphism onto its image.
\end{lemma}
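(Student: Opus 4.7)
The plan is to verify the four standard requirements in sequence: well-definedness, continuity, injectivity, and continuity of the inverse. The map $\psi$ is defined piecewise using the local homeomorphisms $\widetilde\psi_p$ on each tile of $T$, so each step reduces to a combination of a local check (on a single prototile) and a compatibility check (on shared boundaries between adjacent tiles of $T$).

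First I would address well-definedness. A point $z \in \partial\tzero$ lies in the interior of some tile support of $T$ unless it lies on the common boundary of two or more $T$-tiles. In the first case the formula $\psi(z)=\widetilde\psi_p(z-x)+x$ depends only on the unique tile $t=p+x\in T$ with $z\in\supp(t)$, so $\psi(z)$ is unambiguous. In the second case, $T$-consistency of $G$ (Definition \ref{consistent}, condition (i)) forces $z$ to be a boundary vertex of every translated copy $G_p+x_t$ whose tile support contains it. Condition (iii) of the recurrent pair definition then guarantees that each $\widetilde\psi_p$ sends this boundary vertex to a boundary vertex of $S_p$ lying on the same edge, and the $T$-consistency of $S$ (which is built into the definition of a recurrent pair together with the combinatorial equivalence $G_p\sim S_p$) ensures that the images from the neighbouring tiles agree after translation. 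Thus $\psi$ is unambiguously defined on all of $\partial\tzero$.

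Next I would handle continuity. Each local piece of $\partial\tzero$ lying in $\supp(t)$ equals $G_p+x_t$, and on this piece $\psi$ is simply the translate of the continuous homeomorphism $\widetilde\psi_p$. Since the overlaps between these local pieces occur on closed subsets of $\partial\tzero$ (shared boundary vertices) and the definitions agree there by Step 1, the pasting lemma gives continuity of $\psi$ globally. For injectivity, observe that by (iii) a boundary vertex of $G_p$ maps to a boundary vertex of $S_p$ and, since $\widetilde\psi_p$ is a homeomorphism of graphs, an interior point of $G_p$ maps to an interior point of $S_p\subset\supp(p)$. Consequently, the image of the portion of $\partial\tzero$ in $\supp(t)$ remains inside $\supp(t)$, meeting $\partial\supp(t)$ exactly at the image boundary vertices. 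If $\psi(z_1)=\psi(z_2)$, then either both preimages lie in the interior of a common tile support (whence $z_1=z_2$ by injectivity of $\widetilde\psi_p$), or the common image is a shared boundary vertex, and Step 1 reduces the equality to the injective matching of boundary vertices across adjacent tiles.

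Finally I would argue that $\psi^{-1}\colon\im(\psi)\to\partial\tzero$ is continuous. The key observation is that $\partial\tzero$ is locally compact: any bounded neighbourhood meets only finitely many translated graphs $G_p+x_t$, and each such piece is compact. Restricted to any finite union of these pieces, $\psi$ is a continuous injection between compact metric spaces, hence a homeomorphism onto its image, and its inverse is continuous there. Since this holds on a neighbourhood base of every point of $\im(\psi)$, the global inverse is continuous, and $\psi$ is a homeomorphism onto its image.

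The main obstacle is Step 1: making the compatibility of the local maps $\widetilde\psi_p$ across shared boundaries fully rigorous. This is exactly where conditions (ii) and (iii) of the recurrent pair definition, together with the $T$-consistency of both $G$ and $S$, must be combined to show that corresponding boundary vertices on adjacent tiles are sent to corresponding boundary vertices. Once this compatibility is established the remaining steps are essentially formal.
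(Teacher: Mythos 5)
Your proposal is correct and follows essentially the same route as the paper's (much terser) proof: translation of the homeomorphisms $\widetilde\psi_p$ on tile interiors, condition (iii) of the recurrent pair together with $T$-consistency of $S$ for well-definedness and continuity across $\partial\tzero\cap\partial T$, and $T$-consistency of $G$ for injectivity. The only genuine addition is your explicit local-compactness argument for continuity of the inverse, which the paper leaves implicit.
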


\begin{proof}
Since $\psi$ restricted to the interior of any $T$-tile is the translation of a homeomorphism, the only question is what happens on $\partial \tzero \cap \partial T$.  Part (\ref{recurrent iii}) in the definition of a recurrent pair along with the fact that $S$ is $T$-consistent imply that $\psi$ is well-defined and continuous.  The fact that $G$ is $T$-consistent implies that $\psi$ is injective. Thus $\psi$ is a homeomorphism.
\end{proof}

We define the tiling $\tone$ to be the tiling with boundary $\psi(\partial \tzero)$; its tiles are given by the empty Jordan curves in this graph and we allow a tile in $\tone$ to inherit the label of its corresponding tile in $\tzero$.
It should be noted that $\partial \tone=\psi(\partial \tzero)$ is the boundary of the tiling $T_S$ induced by $S$ on $T$.
Like $\tzero$, the tiling $T^{(1)}$ is typically not self similar but is pseudo-self-similar \cite{Me.Boris} and the two tilings are combinatorially equivalent and mutually locally derivable.

The set $\psi(\partial \tzero) = \partial (T^{(1)})$ is not contained in $\partial \tzero$, but because it is induced from the self-similar tiling $T$ it is contained in $\lambda^{-1}(\partial \tzero)$.
Thus if we wish to apply $\psi$ again, we must do so on $\lambda \psi(\partial \tzero)$.  Taking this into account we define $\psi^{(n)}: \partial \tzero \to \R^2$ by
\begin{equation}
\psi^{(n)}(z)=\lambda^{-n}(\lambda \psi)^n(z)
\end{equation}
That is, the edges in $\partial \tzero$ are redrawn and then inflated by $\lambda$ so that the result is again in $\partial \tzero$, after which they are redrawn and inflated again until this has happened $n$ times.  Rescaling the result by $\lambda^{-n}$ brings it back to the original scale.  The map $(\lambda \psi)^n$ can be thought of as a kind of `inflate-and-subdivide' rule for edges of $G$, taking an edge in $\partial \tzero$ to a sequence of edges in $\partial \tzero$ of length approximately $\lambda^n$ times as long.

The map $\psin$ is  a homeomorphism for any finite $n$ and we define $\tn$ to be the tiling with boundary graph $\psin(\partial \tzero)$ whose tiles inherit the labels of their corresponding tiles in $\tzero$:
\begin{definition}
For each tile $t \in \tzero$ we define the tile $t^{(n)}$ in $\tn$ as being supported by the set enclosed by $\psin(\partial t)$ and carrying the label of $t$.
\end{definition}

We define $\psi^{(\infty)}$ to be the pointwise limit of $\psi^{(n)}$, whose existence is proved in the following lemma.
\begin{lemma}
The sequence $(\psi^{(n)})_{n =1}^\infty$ is uniformly Cauchy. Hence the pointwise limit, denoted $\psi^{(\infty)}$, is continuous.
\end{lemma}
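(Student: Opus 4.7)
The plan is to establish a uniform geometric-decay bound on consecutive differences $\psi^{(n+1)} - \psi^{(n)}$, so that uniform convergence follows by dominance by a convergent geometric series. The key fact making this work is simply that $\psi$ moves each point by a bounded amount (bounded by a prototile diameter) and that the iterate formula sandwiches $\psi$ inside a factor $\lambda^{-n}$.

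First I would establish the base bound on the displacement of $\psi$. Each local homeomorphism $\widetilde\psi_p : G_p \to S_p$ sends points of $G_p \subset \supp(p)$ to points of $S_p \subset \supp(p)$, so $|\widetilde\psi_p(z) - z| \leq \diam(\supp(p))$. Passing to $\psi$ via the translations that build $\partial \tzero$ from the $G_p$'s, we obtain
\[
|\psi(z) - z| \leq C \quad \text{for all } z \in \partial \tzero,
\]
where $C := \max_{p \in \pset} \diam(\supp(p)) < \infty$ (since $\pset$ is finite).

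Next I would exploit the identity $\psi^{(n)}(z) = \lambda^{-n}(\lambda\psi)^n(z)$. Since $\psi(\partial \tzero) \subset \lambda^{-1}\partial \tzero$, the rescaled map $\lambda\psi$ takes $\partial \tzero$ into itself and can be iterated. Setting $w_n := (\lambda\psi)^n(z) \in \partial \tzero$, a short calculation gives
\[
\psi^{(n+1)}(z) - \psi^{(n)}(z) = \lambda^{-(n+1)} (\lambda\psi)(w_n) - \lambda^{-n} w_n = \lambda^{-n}\bigl(\psi(w_n) - w_n\bigr).
\]
Combined with the displacement bound, this yields the uniform estimate $|\psi^{(n+1)}(z) - \psi^{(n)}(z)| \leq C \lambda^{-n}$ for every $z \in \partial \tzero$.

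From here, for any $m > n$, telescoping and applying the triangle inequality gives $\|\psi^{(m)} - \psi^{(n)}\|_\infty \leq C \sum_{k=n}^{m-1} \lambda^{-k} \leq \frac{C \lambda^{-n}}{1-\lambda^{-1}}$, which tends to $0$ as $n \to \infty$ since $\lambda > 1$. Hence $(\psi^{(n)})$ is uniformly Cauchy. Each $\psi^{(n)}$ is continuous (it is built by composing the homeomorphism $\psi$ with scalings), so the uniform limit $\psi^{(\infty)}$ exists pointwise and is continuous. There is no substantive obstacle here; the only care required is the bookkeeping of the scale factors in the definition of $\psi^{(n)}$, and the observation that $\lambda\psi$ is a legitimate self-map of $\partial \tzero$ so that $(\lambda\psi)^n$ is well-defined.
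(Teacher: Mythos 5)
Your proposal is correct and follows essentially the same route as the paper: both arguments rest on the observation that $\psi$ displaces each point by at most a prototile diameter (since a point and its image lie in the same tile of $T$), and then extract the factor $\lambda^{-n}$ from the definition $\psi^{(n)} = \lambda^{-n}(\lambda\psi)^n$ to get geometric decay. The only cosmetic difference is that you telescope consecutive differences and sum a geometric series, whereas the paper bounds $|\psi^{(m)}(z)-\psi^{(n)}(z)|$ directly as $\lambda^{-m}|y-\psi^{(n-m)}(y)| \le \lambda^{-m}C$; both are valid.
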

\begin{proof}
Fix $\epsilon > 0$. Let $C = \max_{t \in T} \diam t$, and choose $N \in \N$ such that $C\lambda^{-N} < \epsilon$. Fix $m,n > N$, and assume without loss of generality that $m < n$. For $z \in \partial T$ 
we have
\begin{align*}
|\psi^{(m)}(z) - \psi^{(n)}(z) | &= |\lambda^{-m}(\lambda\psi)^m(z) - \lambda^{-n}(\lambda\psi)^n(z)|\\
&= \lambda^{-m}|(\psi\lambda)^m(z) - \lambda^{-(n-m)}(\lambda\psi)^{n-m}(\lambda\psi)^m(z)|\\
&= \lambda^{-m}|y - \psi^{(n-m)}(y)| \quad \left(\text{letting } y = (\lambda\psi)^m(z)\right)\\
&\leq \lambda^{-m} C \quad \left(\text{since } y \text{ and } \psi^{(n-m)}(y) \text{ are in the same tile of } T\right) \\
&< \epsilon.
\end{align*}
So $(\psi^{(n)})_{n =1}^\infty$ is uniformly Cauchy, and $\psi^{(\infty)}$ is continuous.
\end{proof}


\subsection{The limiting fractal tiling.}
When $\psiinf$ is injective its image forms the boundary of a tiling which we denote $\Tinf$ and which we will show is self-similar.   Under the condition of injectivity we make the formal definition of a fractal realization.


\begin{definition}
For each tile $t \in \tzero$ we define the tile $t^{(\infty)}$ in $\Tinf$ as being supported by the set enclosed by $\psiinf(\partial t)$ and carrying the label of $t$.  
 We call $\Tinf$ a {\em fractal realization} of $\tzero$ (or of $G$).
\label{definition_tinfty}
\end{definition}

We collect a few observations about $\psin$, $\psiinf$, $\tn$ and $\Tinf$ in the following lemma.

\begin{lemma}\label{MLD_lemma}
When $\psiinf$ is injective,
\begin{enumerate}
\item $\psi^{(1)} = \psi$
\item If $z \in \partial \tzero$ and $z \in \supp(t)$ for a $T$-tile $t$, then $\psin(z) \in \supp(t)$ for all $n$.
\item\label{MLD_part} $T$, $\tzero$, $\tn$, and $\Tinf$ are MLD for all $n \in \N$, and
\item For any $z \in \partial \tzero$ and $n \in \N$, \begin{equation}
\lambda^n \psiinf(z) = \psiinf((\lambda\psi)^n(z)). \label{tinf_invariant_lambda}
\end{equation}
\end{enumerate}
\label{psifacts}
\end{lemma}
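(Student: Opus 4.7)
The four assertions split into three routine ones and one that carries real content. Claims (1) and (4) follow from the definition $\psi^{(n)}(z) = \lambda^{-n}(\lambda\psi)^n(z)$. Indeed, (1) is $\psi^{(1)}(z) = \lambda^{-1}(\lambda\psi)(z) = \psi(z)$, and for (4) I would simply write both sides as pointwise limits:
\begin{align*}
\lambda^n \psi^{(\infty)}(z) &= \lim_{k\to\infty} \lambda^{n-k}(\lambda\psi)^k(z),\\
\psi^{(\infty)}((\lambda\psi)^n(z)) &= \lim_{k\to\infty} \lambda^{-k}(\lambda\psi)^{n+k}(z),
\end{align*}
and observe that the substitution $j = n+k$ in the second limit makes the two expressions identical.

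Claim (2) I would prove by induction on $n$, exploiting the self-similarity of $T$. The base case $n = 1$ uses only that $\psi$ is tilewise: if $z \in \partial T^{(0)} \cap \supp(t)$ with $t = p + x \in T$, then $\psi(z) = \widetilde\psi_p(z - x) + x \in \supp(p) + x = \supp(t)$. For the inductive step, recall from the paragraph preceding the definition of $\psi^{(n)}$ that $\lambda\psi(\partial T^{(0)}) \subset \partial T^{(0)}$, so the iterate $(\lambda\psi)^k(z)$ is well-defined on $\partial T^{(0)}$ for all $k$. Since $\lambda\supp(t) = \supp(\omega(t))$ is a union of $T$-tiles, there exists $t' \in \omega(t)$ with $\lambda\psi(z) \in \supp(t')$, so applying the base case to $t'$ and iterating yields $(\lambda\psi)^n(z) \in \lambda^n \supp(t)$. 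Rescaling by $\lambda^{-n}$ gives $\psi^{(n)}(z) \in \supp(t)$, and because $\supp(t)$ is closed the inclusion passes to the pointwise limit $\psi^{(\infty)}(z)$.

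Claim (3) is the substantive one, and I would handle it by reducing everything to MLD with $T$. We already know that $T$ and $T^{(0)} = T_G$ are MLD (Proposition \ref{quasi-dual tiling}, or more generally by the fact that a $T$-consistent embedding is determined locally by $T$), so it suffices to show $T^{(n)}$ and $T^{(\infty)}$ are each MLD with $T$. From a neighborhood of a $T$-tile $t$ one can locally reconstruct the corresponding $T^{(0)}$-tile, then redraw its boundary by $\psi^{(n)}$ (or $\psi^{(\infty)}$); by claim (2) this redrawing never leaves the finite patch of $T$-tiles intersecting the original $T^{(0)}$-tile, so the whole computation uses only a bounded neighborhood of $t$ in $T$. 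Conversely, tiles of $T^{(n)}$ and $T^{(\infty)}$ inherit the labels of their $T^{(0)}$-predecessors, and each label encodes the underlying $T$-vertex pattern, giving a local reconstruction in the opposite direction.

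The key obstacle is the $n = \infty$ case of (3): a priori, evaluating $\psi^{(\infty)}(z)$ requires infinitely many iterations of $\lambda\psi$, and one might worry that the diameter of the region of $T$ consulted grows without bound. Claim (2) is precisely the uniform confinement needed to defuse this worry: every iterate of $z$ remains in the single $T$-tile containing $z$, so the infinite iteration is encoded locally and the $n = \infty$ case reduces to the finite-$n$ case without any new input.
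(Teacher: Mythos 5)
Your proof is correct and follows essentially the same route as the paper's: part (1) from the definition, part (2) by induction using that $\lambda\supp(t)=\supp(\omega(t))$ is a union of $T$-tiles, part (4) by the reindexed limit computation, and part (3) by reducing everything to mutual local derivability with $T$ via the tile labels and the confinement from part (2). The paper's own proof is simply much terser (it dismisses (2) and (3) as ``by induction'' and ``immediate from the definitions''), so your write-up just supplies the details it leaves implicit.
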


\begin{proof}   The first part follows directly from the definition of $\psik$.  The second follows from the construction of $\psin$ by induction, since it is true for $\psi = \psi^{(1)}$.
The fact that $\tzero, \tn,$ and $\Tinf$ are MLD is immediate from the definitions.  Since every tile in $\tzero$ corresponds to a patch in $T$ we see that $\tzero$ and $T$ are MLD.
Part \eqref{MLD_part} can be seen by considering only $k > n$ in the following:
\begin{align*}
\lambda^n \psiinf(z) &= \lim_{k \to \infty} \lambda^{n-k}(\lambda \psi)^k(z)\\
&=\lim_{k \to \infty} \lambda^{n-k}(\lambda \psi)^{k-n}((\lambda \psi)^n(z))\\
&=\psiinf((\lambda\psi)^n(z)) \qedhere
\end{align*}
\end{proof}

\begin{thm}\label{prop:T infty is a tiling}
Let $T$ be self-similar tiling with expansion factor $\lambda$, and let $(G,S)$ be a recurrent pair for $T$. If $\psi^{(\infty)}$ is injective then $T^{(\infty)}$ is a self-similar tiling with expansion factor $\lambda$ and finite local complexity.
\end{thm}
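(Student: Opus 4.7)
The plan is to verify, in turn, that $\Tinf$ is a tiling, that it is self-similar with expansion factor $\lambda$, and that it has finite local complexity. The essential tools are the injectivity of $\psiinf$, the invariance relation \eqref{tinf_invariant_lambda}, the tile-trapping in Lemma \ref{MLD_lemma}(2), and the MLD statement of Lemma \ref{MLD_lemma}(\ref{MLD_part}).

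For the tiling property, I would define $\supp(t^{(\infty)})$ for each $t \in \tzero$ as the topological disk bounded by the Jordan curve $\psiinf(\partial t)$. This is well defined because $\partial t$ is a Jordan curve and $\psiinf$, restricted to the compact set $\partial t$, is a continuous injection and hence a homeomorphism onto its image. By Lemma \ref{MLD_lemma}(2), $\supp(t^{(\infty)}) \subseteq \supp(\tau)$ for the $T$-tile $\tau$ containing $\supp(t)$; since only finitely many $\tzero$-tiles meet $\supp(\tau)$ and their boundaries map injectively under $\psiinf$, the Jordan curve theorem applied inside $\supp(\tau)$ yields both covering (the $\Tinf$-tiles coming from those $\tzero$-tiles fill $\supp(\tau)$) and packing (their interiors are pairwise disjoint). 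Since the $T$-tiles tile $\R^2$, so do the $\Tinf$-tiles.

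For self-similarity, the inclusion $\lambda \partial \Tinf \subseteq \partial \Tinf$ follows directly from \eqref{tinf_invariant_lambda} with $n=1$: for $z \in \partial \tzero$, $\lambda \psiinf(z) = \psiinf(\lambda \psi(z))$, and since $\partial \tone = \psi(\partial \tzero) \subseteq \lambda^{-1}\partial \tzero$ we have $\lambda \psi(z) \in \partial \tzero$, so the left-hand side lies in $\psiinf(\partial \tzero) = \partial \Tinf$. The second self-similarity condition requires that translationally equivalent $\Tinf$-tiles have translationally equivalent $\lambda$-scaled substitution patches; this follows by invoking MLD between $\Tinf$ and $T$ (Lemma \ref{MLD_lemma}(\ref{MLD_part})) to transport the translational equivalence to equivalent $T$-patches, applying self-similarity of $T$ to the $\lambda$-enlarged patches, and then transporting back to $\Tinf$ via MLD.

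Finite local complexity is then immediate: $\Tinf$ is MLD with $T$, which has FLC by the standing assumption, and FLC is preserved under MLD. The main obstacle I anticipate is the tiling step, specifically the covering assertion: the map $\psiinf$ is only a pointwise limit on a non-compact planar set, so one must carefully exploit the local finiteness of $\tzero$ within each $T$-tile together with the Jordan curve theorem to upgrade the pointwise injectivity of $\psiinf$ into a genuine tiling statement, rather than getting a family of disks with overlapping or missing interiors. Once this is in place, the self-similar and MLD structure of $T$ pushes through $\psiinf$ routinely.
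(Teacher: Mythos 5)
Your proposal matches the paper's proof in all the parts the paper actually argues: the inclusion $\lambda\,\partial \Tinf \subseteq \partial \Tinf$ via the identity $\lambda\,\psiinf(z) = \psiinf((\lambda\psi)(z))$ from Lemma \ref{psifacts}, the equivalence of inflated patches by transporting translational equivalence through the correspondence between $\Tinf$-tiles and their labelling $T$-patches and then using self-similarity of $T$, and FLC via the injective correspondence between patches of $\Tinf$ and patches of $T$ (the paper counts two-tile patches directly rather than citing preservation of FLC under MLD, but the content is the same). You go further than the paper on the covering/packing verification, which the paper simply asserts when it declares that $\psiinf(\partial\tzero)$ "forms the boundary of a tiling"; supplying this is a genuine improvement in rigour. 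One correction to that extra step, though: your trapping claim "$\supp(t^{(\infty)}) \subseteq \supp(\tau)$ for the $T$-tile $\tau$ containing $\supp(t)$" presumes each $\tzero$-tile lies inside a single $T$-tile, which is false in the prototypical (quasi-dual) case where tiles of $\tzero$ are centred at vertices of $T$ and straddle several $T$-tiles. The correct consequence of Lemma \ref{MLD_lemma}(2) is that each point of $\partial t$ stays in whichever $T$-tile it started in, so $\supp(t^{(\infty)}) \subseteq \supp([t]^T)$; the local-finiteness and Jordan-curve argument should therefore be run over the finite patches $[t]^T$ (or over a large ball) rather than over individual $T$-tiles. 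With that adjustment the argument goes through.
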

\begin{proof}
To show $\Tinf$ has finite local complexity we show that it has finitely many types of $2$-tile patches up to translation. Since $\psi$ is injective, patches in $\Tinf$ are in one-to-one correspondence with patches in $\tzero$. The number of two-tile patches in $\tzero$ is governed by the number of types of patches in the original self-similar tiling $T$ that intersect a two-tile patch in $\tzero$.  Since $T$ is FLC, this number is finite.  Hence both $\tzero$ and $\Tinf$ (and all other $\tn$) have FLC.

To prove $T^{(\infty)}$ is self-similar we show that its boundary is invariant under scaling by $\lambda$ and that equivalent tiles inflate to equivalent patches.  The former follows directly from Lemma \ref{psifacts}, since if $x=\psiinf(z) \in \partial \Tinf$,  $\lambda x = \psiinf((\lambda \psi)(z)) \in \partial \Tinf$, as desired.  To prove the latter suppose $t_1$ and $t_2$ are equivalent tiles in $\tzero$ so that $t_1^{(\infty)}$ and $t_2^{(\infty)}$ are equivalent tiles in $\Tinf$ . 
Denote by $[t_1]^{T}$ and $[t_2]^T$ the patches in $T$ that intersect the supports of $t_1$ and $t_2$, respectively; these are also equivalent, by definition, since $t_1$ and $t_2$ carry the same label.  Since $T$ is self-similar the $T$-patches in $\lambda \supp [t_1]^{T}$ and $\lambda \supp [t_2]^{T}$ 
are equivalent.  This means that the $\tzero$- and $\Tinf$-patches contained completely inside them are equivalent as well. Since $\supp(t_1^{(\infty)}) $ is contained in  $[t_1]^{T}$ the same way $\supp(t_2^{(\infty)})$ is contained in $[t_2]^{T}$, $\lambda \supp(t_1^{(\infty)})$ and $\lambda \supp(t_2^{(\infty)})$ support equivalent $\Tinf$-patches, as required.
\end{proof} 

Under these circumstances we will let $\pinf$ denote the set of prototiles for the self-similar tiling $\Tinf$, and let $\subinf$ denote its substitution map.

\section{The existence of fractal dual and quasi-dual tilings}\label{sec:exist}

The process described in the previous section can be used to find recurrent pairs with injective $\psiinf$ maps by experimentation.  This section contains technical results related to the existence of such pairs in general.

In the first part of this section we provide sufficient conditions on a recurrent pair $(G,S)$ for the map $\psiinf$ to be injective.  These conditions result in fractal tilings that are  quasi-dual to their original self-similar tilings.
In the second part of the section we show that every singly edge-to-edge self-similar tiling has a recurrent pair satisfying the injectivity conditions.
In the third part of this section we prove that if a singly edge-to-edge self-similar tiling has convex prototiles, then its labelled combinatorial dual has a fractal realization.

\subsection{Conditions under which $\psiinf$ is injective.}

Next we introduce conditions that ensure that the combinatorics of $\psiinf(\partial \tzero)$ are identical to the combinatorics of $\tzero$.  One way this can fail if the interior of an edge in $\psin(\partial \tzero)$ approaches the boundary of a tile in the original self-similar tiling $T$ as $n \to \infty$.  If $\psiinf(z) \in \partial T$ and $z \notin \partial T$, it can happen that an additional tile (or tiles) can arise in $\Tinf$ either by two edges coming together that were apart in $\tzero$ or by an edge doubling back on itself. Infinitely many arbitrarily small tiles are also a danger in this situation.  We can avoid these troubles by keeping $\psin(\partial \tzero)$ away from the boundary of $T$ except at $T$-consistent boundary vertices.

Another way the combinatorics of $\psiinf(\partial\tzero)$ can  fail to be those of $\partial \tzero$ is if $\psin(z)$ approaches the vertex of a tile in $T$ as $n \to \infty$.  In this situation there may still be a perfectly good self-similar tiling $\Tinf$, but we've lost control of its combinatorics and its substitution rule is not guaranteed to be border forcing.

Before we give the injectivity conditions we set some notation.  In what follows, for a real number $k>0$ and any tiling $T$, the notation $kT$ represents the tiling obtained by rescaling all the tiles of $T$ by a factor of $k$ and keeping the labels the same.  Recall that if $B$ is any subset of $\R^2$ or patch of tiles and $Q$ is a tiling or patch the notation $[B]^Q$ represents the patch in $Q$ whose support intersects $B$.  In particular, for $p \in \pset$ note that $[B]^{\rrule^n(\pset)_p}$ is the patch of $n$-subtiles of $\rrule^n(\pset)$ in $p$ that intersect $B$.

\begin{definition}[Injectivity conditions]\label{injectivity_conditions}
Let $S$ be a geometric graph on $\pset$.  If there is an $N \in \N$ for which the following four conditions hold, we say that $S$ {\em satisfies the injectivity conditions for $N$-subtiles}.
\begin{enumerate}
\renewcommand{\theenumi}{I{\arabic{enumi}}}
\item\label{pc1} For any $p \in \pset$ and edges $e \neq f$ in $S_p$, the patches $[e]^{\rrule^N(\pset)_p}$ and $[f]^{\rrule^N(\pset)_p}$ have intersecting interiors if and only if $e$ and $f$ share a common vertex $v$;
\item\label{pc2} If $e \neq f \in S_p$ share a common vertex $v$ then $[e]^{\rrule^N(\pset)_p} \cap [f]^{\rrule^N(\pset)_p}=[v]^{\rrule^N(\pset)_p}$, the single subtile containing $v$, and this subtile is contained in the interior of $\supp(p)$;
\item\label{pc3} For each $p \in \pset$, $[S_p]^{\rrule^N(\pset)_p}$ does not contain any vertex of $p$ in its support; and
\item\label{pc4} For $e$ in $S_p$,  the support of $[e]^{\rrule^N(\pset)_p}$ intersects the boundary of $p$ if and only if $e$ does, in which case  $\supp[e]^{\rrule^N(\pset)_p} \cap \partial p$ is connected.
\end{enumerate}
\end{definition}

Notice that if $(G,S)$ is a recurrent pair, $G$ is a quasi-dual graph, and $S$ satisfies the injectivity conditions for $N$-subtiles,  then condition (\ref{pc3}) along with the definition of a recurrent pair implies that $S$ is also a quasi-dual graph.  In general these conditions imply that the interior of the patch of subtiles that $S$ runs through retracts to a geometric graph that has nearly the same combinatorics as $S$.   If the combinatorics differ it is at interior vertices of $S$, several of which could be collapsed into a single vertex in the retraction.

Given a recurrent pair $(G, S)$ we make two assumptions on the embeddings $\iota_G$ and $\iota_S$, which can be considered ``without loss of generality".  The first is that $G$ is a piecewise linear graph.  If for some reason it is not, it can be redrawn as one: the topological realization of the combinatorial graph $K$ corresponding to the geometric graph $G$ can always be embedded into $\supp (\pset)$ in a piecewise linear fashion. We assume, then,  that such an embedding has been chosen for $G$.  The second is on the embedding $\iota_S$ of $S$, which we can control since $S$, being a subgraph of $\rrule(G)$, is also a piecewise linear graph.  Letting $B \subset e \in E(G)$ or $E(S)$, denote by $|B|$ the arc length of the smallest sub-arc of $e$ containing $B$.  In this case we assume that $\iota_S$ has been chosen so that for any $e \in E(G)$ there is a $K_e$ such that for all $B \subset e$ we have $|\iota_S\circ\iota_G^{-1}(B)| = K_e|B|$.  In other words, $\widetilde \psi$ is a {\em piecewise constant-speed parameterization} taking $G$ to $S$. 

One further technical note is on the edge set of $\partial(\tzero)$, which can be thought of in two different ways.  One way is to consider the vertex set endowed by the tiling $\tzero$, and another is to allow $\partial(\tzero)$ to inherit the edge and vertex sets of $G$.  The latter has extra vertices wherever the boundary of $\tzero$ intersects the boundary of $T$. In all the proofs in this section we consider an edge of $\tzero$ to mean a copy of an edge in $G$.

\begin{prop}
Suppose $(G,S)$ is a recurrent pair such that $G$ is quasi-dual and the parameterization $\widetilde \psi: G\to S$ is  piecewise constant-speed.  If $S$ satisfies the injectivity conditions on $N$-subtiles for some $N$, then $\psiinf: \partial \tzero \to \partial \Tinf$ is injective.
\label{thm: psi infinity is injective}
\end{prop}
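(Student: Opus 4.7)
The plan is to trace each point $z \in \partial \tzero$ through a nested sequence of subtile patches shrinking to $\psiinf(z)$ and to use the injectivity conditions to force distinct points $z_1 \neq z_2$ into eventually disjoint patches. Since each $\psi^{(n)}(z)$ stays in the $T$-tile $t$ containing $z$ (Lemma \ref{MLD_lemma}, part 2), the localization happens within $t$. I would prove by induction on $n$ that for each edge $e$ of $\partial\tzero$ sitting in a tile $t$ of type $p$ there is a patch $P_n(e)$ of $(nN)$-subtiles of $t$ with $\psi^{(n)}(e) \subset P_n(e)$, $P_{n+1}(e) \subset P_n(e)$, and $\operatorname{diam}(P_n(e)) \to 0$. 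At level $1$, $P_1(e)$ is the translate into $t$ of $[\widetilde\psi_p(e)]^{\rrule^N(\pset)_p}$; condition \eqref{pc3} keeps $P_1(e)$ away from the vertices of $t$, while \eqref{pc4} ensures it meets $\partial t$ only where $e$ does, and in a connected set. For the inductive step, each sub-edge of $\widetilde\psi_p(e)$ sits in a single $N$-subtile of $P_1(e)$ as a scaled copy of an edge of $G$, so the same construction applied inside that subtile refines the patch further.

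Conditions \eqref{pc1} and \eqref{pc2} then govern how these patches intersect for distinct edges $e \neq f$ in a common $T$-tile: if $\widetilde\psi_p(e)$ and $\widetilde\psi_p(f)$ share no vertex in $S_p$, then $P_1(e)\cap P_1(f)$ has empty interior, and this interior-disjointness is inherited at every further level; if they share a vertex $v$, the intersection is the single interior subtile containing $v$, and iterating inside that subtile shrinks $P_n(e)\cap P_n(f)$ to the single point $\psiinf(v)$. For $z_1, z_2$ in distinct $T$-tiles, the containment of each $\psi^{(n)}(z_i)$ in its own $T$-tile handles the interior case; on shared $T$-tile boundaries, recurrent-pair condition \eqref{recurrent iii} combined with \eqref{pc4} guarantees that boundary vertices stay on their original prototile edges under iteration, so distinct boundary points keep distinct $\psiinf$-images. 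Thus if $z_1, z_2$ lie on distinct edges $e_1 \neq e_2$, the nested patches separate their images unless $e_1, e_2$ share a vertex and both $z_i$ equal that vertex, contradicting $z_1 \neq z_2$.

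The remaining case is $z_1 \neq z_2$ on a single edge $e$. Here the piecewise constant-speed parameterization is essential: it splits $e$ into finitely many sub-arcs each mapped with constant speed factor onto a sub-edge of $\widetilde\psi_p(e)$, which lives in its own $N$-subtile. Iterating, every point of $e$ inherits a symbolic address in the resulting tree of nested sub-arcs; two distinct arc-length positions produce distinct addresses from some finite level on, placing $z_1$ and $z_2$ into disjoint sub-edges and hence disjoint $(nN)$-subtiles, which reduces to the previous case.

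The main obstacle is this within-edge case, where the iterated fractal curve could a priori cross itself or collapse. The piecewise constant-speed hypothesis is precisely what turns the iteration along $e$ into an iterated function system whose attractor is a Jordan arc, and conditions \eqref{pc1}--\eqref{pc4} applied at every scale yield the open-set-type separation that makes the coding map from arc-length parameter to the attractor an injection. The recurrent-pair structure reproduces the same local combinatorics at every iteration, so no new obstruction appears at deeper levels and the result follows.
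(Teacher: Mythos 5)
Your overall strategy---localizing $\psi^{(n)}$ into nested patches of subtiles and using the injectivity conditions to separate distinct points---is essentially the paper's, but two steps as written do not hold up. First, for points on distinct edges you claim that interior-disjointness of the patches $P_n(e_1)$ and $P_n(e_2)$, ``inherited at every further level,'' separates the limits. It does not: two closed patches with disjoint interiors can share boundary, and a priori both $\psiinf(z_1)$ and $\psiinf(z_2)$ could converge to a point of that shared boundary. The paper's Lemma~\ref{lem:disjt edges} confronts exactly this: when $[\lambda\psi(e_x)]^{T}$ and $[\lambda\psi(e_y)]^{T}$ share a boundary piece $B$, it applies one further iteration together with Lemma~\ref{lem:patch boundary} (which rests on (\ref{pc3}) and (\ref{pc4})) to show that $[(\lambda\psi)^2(e_x)]^{T}$ pulls away from $\lambda B$, and only then are the two orbits trapped in disjoint tiles of $T$. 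Your proposal omits this extra iteration, and without it ``interior-disjoint at every level'' proves nothing about the limit points. (Relatedly, $\diam P_n(e) \to 0$ cannot hold for the whole-edge patches, since $P_n(e) \supset \psi^{(n)}(e)$, which converges to a nondegenerate arc; only the per-point refinements shrink.)

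Second, in the within-edge case you assert that two distinct arc-length positions acquire ``distinct addresses from some finite level on, placing $z_1$ and $z_2$ into disjoint sub-edges and hence disjoint $(nN)$-subtiles.'' Distinct addresses only put them into \emph{different} sub-arcs, which may be \emph{adjacent}---sharing a subdivision vertex---at every level; by (\ref{pc2}) the corresponding subtile patches then intersect, in the subtile containing the image of that vertex, at every level, so the reduction to the separated case fails. This adjacent-forever configuration is precisely the second case of the paper's Lemma~\ref{lem:inj on edges}: there one shows that the images of both points coincide with the image of the shared vertex $v_N$, and then falls back on the same-edge contraction estimate $|x-y| \le \rho^n |e_0|$ with $\rho<1$, which is where the piecewise constant-speed hypothesis and (\ref{pc2}) actually enter, to conclude $x_N = v_N = y_N$. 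Your coding picture can be repaired (for instance, once the mesh of the level-$n$ partition of $e$ is less than half the parameter separation, the two sub-arcs are non-adjacent, and injectivity of $(\lambda\psi)^n$ then makes the corresponding edges of $\partial\tzero$ disjoint), but as stated the inference ``distinct addresses $\Rightarrow$ disjoint subtiles'' is false, and it sits at the crux of the proof.
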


We prove Proposition~\ref{thm: psi infinity is injective} using a sequence of lemmas, each of which implicitly has the assumptions of Proposition \ref{thm: psi infinity is injective}.   For convenience of notation we will assume $N = 1$; the general case can be surmised by replacing the substitution rule $\sub$ by $\sub^N$, which changes the expansion factor to $\lambda^N$ and the tiling contraction map to $\rrule^N$.

The injectivity conditions (\ref{pc1})-(\ref{pc4}) are designed to keep $ \psi^{(n)}(e)$  away from the boundaries of both of $\tzero$ and $T$ except where the combinatorics of $\partial T$ explicitly require it, while the constant-speed assumption keeps edges from collapsing onto themselves in the limit.

A key observation in everything that follows is that if  $e$ is an edge in $\partial \tzero$ for which $e = e_p + z$ for some edge $e_p \in G_p$ and $z \in \R^2$, then $[\psi(e)]^{\lambda^{-1} T} = [\widetilde \psi(e_p)]^{\rrule(\pset)_p} + z$  by the self-similarity of $T$.  Thus edges is $\psi(\partial \tzero)$ follow the same injectivity conditions, when intersected with $\lambda^{-1}T$, as edges in $S$ do when intersected with $\rrule(\pset)$.

\begin{lemma}\label{lem:patch boundary}
Suppose that $e \in E(\partial \tzero)$ and $B$ is an edge in $\partial T$ such that $B \cap e = \varnothing$. Then $[\lambda\psi(e)]^{T} \cap \lambda B = \varnothing$.
\end{lemma}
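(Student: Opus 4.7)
The plan is to rescale by $\lambda^{-1}$, reducing the claim to $[\psi(e)]^{\lambda^{-1}T}\cap B=\varnothing$, and then transport the problem into a single prototile using the key observation stated just before the lemma: if $e=e_p+z$ for $e_p\in G_p$ and $t=p+z\in T$, then $[\psi(e)]^{\lambda^{-1}T}=[\widetilde\psi(e_p)]^{\rrule(\pset)_p}+z$. With everything translated into $p$, the question becomes whether the subtile patch $[\widetilde\psi(e_p)]^{\rrule(\pset)_p}$ can meet $B_0:=B-z$ whenever $B_0$ makes sense as a subset of $\supp(p)$.

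I would then split into two cases depending on where $B$ sits. If $B\not\subset\partial t$, then since $B$ and $\partial t$ are distinct edges of $\partial T$ and distinct edges of a FLC tiling meet only at vertices, $B\cap\supp(t)=B\cap\partial t$ is contained in the vertex set of $t$; but the support of our patch is inside $\supp(t)$, so any point of $B$ meeting the patch would be a vertex of $t$, which injectivity condition~(\ref{pc3}) excludes. In the remaining case $B\subset\partial t$, write $B=B_0+z$ with $B_0$ a prototile edge of $p$. The hypothesis $B\cap e=\varnothing$ says $B_0$ contains no boundary endpoint of $e_p$; recurrent pair condition~(iii) then transfers this to $\widetilde\psi(e_p)$, and since $S$ meets $\partial p$ only at boundary vertices (Definition~\ref{consistent}(i)), we conclude $\widetilde\psi(e_p)\cap B_0=\varnothing$.

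The main obstacle is upgrading this to ``the whole patch $[\widetilde\psi(e_p)]^{\rrule(\pset)_p}$ misses $B_0$''. Here I would invoke condition~(\ref{pc4}): either $\widetilde\psi(e_p)$ does not touch $\partial p$ at all, in which case its patch does not touch $\partial p$ either; or $\supp[\widetilde\psi(e_p)]^{\rrule(\pset)_p}\cap\partial p$ is a nonempty connected subset of $\partial p$. Condition~(\ref{pc3}) keeps this set away from the vertices of $p$, and since $\partial p$ is a topological circle cut into the prototile edges by those vertices, a connected vertex-avoiding subset must lie in a single prototile edge. That edge must contain the boundary points of $\widetilde\psi(e_p)$ on $\partial p$, which all lie on prototile edges other than $B_0$, so the whole patch avoids $B_0$. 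Reversing the rescaling by $\lambda$ then delivers $[\lambda\psi(e)]^T\cap\lambda B=\varnothing$.
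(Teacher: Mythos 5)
Your proof is correct and follows essentially the same route as the paper's: both reduce to the prototile via the identity $[\psi(e)]^{\lambda^{-1}T}=[\widetilde\psi(e_p)]^{\rrule(\pset)_p}+z$ and then use conditions (\ref{pc3}) and (\ref{pc4}) to confine the patch's intersection with $\partial T$ to a connected, vertex-avoiding subset of the single prototile edge that $e$ itself meets, which is disjoint from $B$. Your write-up is organized around a case split on the location of $B$ rather than on whether $e$ touches $\partial T$, and it makes explicit some steps (the role of recurrent-pair condition (iii) and of edge-to-edgeness) that the paper leaves implicit, but the underlying mechanism is identical.
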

\begin{proof}
If $e$ has no vertex in $\partial T$ then by (\ref{pc4}),  $[\psi(e)]^{\lambda^{-1}T} \cap B = \varnothing$.  If $e$ has a vertex in $\partial T$ then by (\ref{pc3}) and (\ref{pc4}), $[\psi(e)]^{\lambda^{-1}T}$ must intersect $\partial T$ only on the edge that $e$ does. However,  $B \cap e =\varnothing$, and $[\lambda\psi(e)]^{T} \cap \lambda B = \varnothing$.
\end{proof}

\begin{lemma}\label{lem:disjt edges}
Suppose that $x,y \in \partial \tzero$ and that there exists $n \in \N$ such that $(\lambda\psi)^n(x)$ and $(\lambda \psi)^n(y)$ are on disjoint edges in $\partial \tzero$. Then $\psi^{(\infty)}(x) \neq \psi^{(\infty)}(y)$.
\end{lemma}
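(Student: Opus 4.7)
The plan is to reduce to the base case $n=0$ using the functional equation $\lambda^n \psiinf(z) = \psiinf((\lambda\psi)^n(z))$ from Lemma \ref{psifacts}, and then show that the nested sequences of $\lambda^{-k}T$-tile patches containing $\psik(x)$ and $\psik(y)$ must eventually separate. Since scaling by $\lambda^n$ is a bijection, $\psiinf(x) = \psiinf(y)$ if and only if $\psiinf((\lambda\psi)^n(x)) = \psiinf((\lambda\psi)^n(y))$, so after replacing $x,y$ by $(\lambda\psi)^n(x),(\lambda\psi)^n(y)$ I may assume $n=0$: $x \in e_x$, $y \in e_y$ with $e_x \cap e_y = \varnothing$.

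For each $k \geq 0$ and each $z \in \partial\tzero$, set $V_k(z) := [\psik(z)]^{\lambda^{-k}T}$, the minimal $\lambda^{-k}T$-tile patch containing $\psik(z)$. Self-similarity of $T$ makes $\lambda^{-(k+1)}T$ a refinement of $\lambda^{-k}T$, and Lemma \ref{psifacts}(2) applied at scale $\lambda^{-k}$ shows that $\psik(z)$ and $\psi^{(k+1)}(z)$ lie in the same $\lambda^{-k}T$-tile; hence $V_{k+1}(z) \subseteq V_k(z)$. Because $\diam V_k(z) = O(\lambda^{-k})$, the singleton $\{\psiinf(z)\}$ equals $\bigcap_{k\geq 0} V_k(z)$, so it suffices to exhibit some $k$ for which $V_k(x) \cap V_k(y) = \varnothing$.

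I then split on the $T$-tiles $t_x, t_y$ containing $e_x, e_y$. If $t_x \cap t_y = \varnothing$, there is nothing to prove. If $t_x, t_y$ share an edge $B \subset \partial T$ and (say) $e_x \cap B = \varnothing$, Lemma \ref{lem:patch boundary} gives $[\lambda\psi(e_x)]^T \cap \lambda B = \varnothing$; descaling by $\lambda^{-1}$ shows $V_1(x)$ is disjoint from $B$, and therefore from $V_k(y) \subseteq t_y$ for all $k \geq 1$. If $t_x = t_y = t$ of type $p$, then the disjoint edges $e_{x,p}, e_{y,p} \subset G_p$ are carried by the recurrent-pair homeomorphism $\widetilde\psi_p$ to edges of $S_p$ with no common vertex, so injectivity condition (\ref{pc1}) forces $[\widetilde\psi(e_{x,p})]^{\rrule(\pset)_p}$ and $[\widetilde\psi(e_{y,p})]^{\rrule(\pset)_p}$ to have disjoint interiors, putting $V_1(x)$ and $V_1(y)$ into distinct $\lambda^{-1}T$-subtiles of $t$. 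In either case I iterate the same case analysis with $\psi(x),\psi(y)$ in place of $x,y$ and $\lambda^{-1}T$ in place of $T$.

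The main obstacle will be the residual configuration in which at every scale $V_k(x)$ and $V_k(y)$ remain distinct but adjacent, with the edges $e_x^{(k)}, e_y^{(k)}$ containing $\psik(x),\psik(y)$ each touching the shared boundary. Disjointness forces the touching points to be distinct boundary vertices of the underlying graph, and injectivity conditions (\ref{pc4}) and (\ref{pc3}) together confine each touching set to a connected arc of the shared boundary that is bounded away from the vertices of the $\lambda^{-k}T$-subtiles. Iterating, the two touching arcs shrink at rate $O(\lambda^{-k})$ into disjoint neighborhoods of the two distinct vertices, so $V_k(x) \cap V_k(y) = \varnothing$ for some finite $k$, and we conclude $\psiinf(x) \neq \psiinf(y)$.
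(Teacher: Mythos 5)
Your overall strategy is the paper's: reduce to $n=0$ via $\lambda^n\psiinf(z)=\psiinf((\lambda\psi)^n(z))$, then separate $\psik(x)$ and $\psik(y)$ into disjoint patches of $\lambda^{-k}T$-tiles by a case analysis on the tiles containing $e_x$ and $e_y$, using condition (\ref{pc1}) and Lemma \ref{lem:patch boundary}. The first three paragraphs are essentially sound, modulo two small inaccuracies: $\supp V_{k+1}(z)\subseteq \supp V_k(z)$ can fail when $\psi^{(k+1)}(z)$ lies on a subtile boundary straddling two level-$k$ tiles, and $V_k(y)$ need not be contained in $t_y$. Neither is needed for the sufficiency claim you actually use, namely that $\psiinf(z)\in\supp V_k(z)$ for all $k$ while the diameters tend to $0$.

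The genuine gap is the final paragraph, i.e.\ the termination of your iteration when the two patches remain adjacent. The configuration you describe there---the image edges touching the shared boundary of the two patches at distinct boundary vertices---cannot occur: $S$ is $T$-consistent, so an image edge can meet a subtile boundary only at a boundary vertex of the (rescaled) graph, and at such a vertex the edge crosses into the adjacent subtile, which lies in the interior of the other patch, contradicting the disjoint interiors guaranteed by (\ref{pc1}). This is precisely the observation the paper makes (invoking quasi-duality) to conclude that neither $\lambda\psi(e_x)$ nor $\lambda\psi(e_y)$ meets the shared boundary $B$ at all. Once that is known, one further application of Lemma \ref{lem:patch boundary} (to the edges of $\partial\tzero$ that make up $\lambda\psi(e_x)$, each disjoint from $B$) yields $[(\lambda\psi)^2(e_x)]^{T}\cap\lambda B=\varnothing$, so $(\lambda\psi)^2(x)$ and $(\lambda\psi)^2(y)$ already lie in disjoint $T$-tiles: the adjacency case terminates after exactly one more step, and no limiting argument is required. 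As written, your ``shrinking arcs'' paragraph neither identifies a configuration that can actually arise nor establishes $V_k(x)\cap V_k(y)=\varnothing$ at any finite $k$ (disjointness of the two touching arcs would not by itself make the two patches disjoint), so the proof is incomplete at the one point where the real work of the lemma lies.
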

\begin{proof}
Denote by $e_x$ and $e_y$ the edges in $\partial \tzero$ containing $x$ and $y$, respectively. If $x$ and $y$ are in distinct tiles in $T$, then even if those tiles share a common boundary segment, that segment cannot intersect both $e_x$ and $e_y$ without violating either the fact that $G$ is quasi-dual or that $e_x \cap e_y = \varnothing$.  Thus, in this case, it follows that $\psi^{(\infty)}(x) \neq \psi^{(\infty)}(y)$.

Next we consider the case where $e_x$ and $e_y$ are in the same $T$-tile.  Since $\lambda\psi$ is injective, we have that $(\lambda\psi (e_x)) \cap (\lambda\psi (e_y)) = \varnothing$. Then~\eqref{pc1} implies that $\int(\supp[\lambda\psi(e_x)]^{T}) \cap \int(\supp[\lambda\psi(e_y)]^{T}) = \varnothing$. If $\supp[\lambda\psi(e_x)]^{T} \cap \supp[\lambda\psi(e_y)]^{T} = \varnothing$, then we are done since $\psiinf(x) \in \supp\lambda^{-1}[\lambda\psi(e_x)]^{T}$ and $\psiinf(y) \in \supp\lambda^{-1}[\lambda\psi(e_y)]^{T}$.
If not, then $[\lambda\psi(e_x)]^{T}$ and $[\lambda\psi(e_y)]^{T}$ share some part $B$ of their boundary.  Since $G$ is quasi-dual, neither $\lambda\psi (e_x)$ nor $\lambda\psi (e_y)$ intersect $B$, for otherwise $\lambda\psi(e_x)$ would pass through $B$ into the interior of $[\lambda\psi(e_y)]^{T}$ or vice versa.
By Lemma~\ref{lem:patch boundary}, we have that $[(\lambda\psi)^2(e_x)]^{T} \cap \lambda B = \varnothing$. So there exist $t_x, t_y \in T$ satisfying $(\lambda\psi)^2(x) \in t_x$, $(\lambda\psi)^2(y) \in t_y$ and $t_x \cap t_y = \varnothing$. Since $\lambda^2\psi^{(\infty)}(x) = \psi^{(\infty)} ((\lambda\psi)^2(x))\in \supp(t_x)$  and $\lambda^2\psi^{(\infty)}(y) = \psi^{(\infty)} ((\lambda\psi)^2(y)) \in \supp (t_y)$, we have $\psi^{(\infty)}(x) \neq \psi^{(\infty)}(y)$.

Now suppose that there exists $n \in \N$ such that $(\lambda\psi)^n(x)$ and $(\lambda \psi)^n(y)$ are on disjoint edges in $E(\partial \tzero)$. Then by the previous argument, $ \psi^{(\infty)} ((\lambda \psi)^n(x)) \neq \psi^{(\infty)} ((\lambda \psi)^n(y)) $, making $\lambda^n \psi^{(\infty)}(x) \neq \lambda^n \psi^{(\infty)}(y)$, and in turn $\psi^{(\infty)}(x) \neq \psi^{(\infty)}(y)$, as desired.
\end{proof}

Lemma \ref{lem:disjt edges} implies that edges that are disjoint in $\tzero$ remain disjoint in $\Tinf$.  The next lemma shows that $\psiinf$ restricted to edges of $\tzero$ is injective, meaning that an edge in $\tzero$ cannot loop back onto itself in $\Tinf$.  This is the part of the proof that uses the assumption that the parameterizations are piecewise constant-speed.

\begin{lemma}\label{lem:inj on edges}
For each $e \in E(\partial \tzero)$, $\psi^{(\infty)}|_e$ is injective.
\end{lemma}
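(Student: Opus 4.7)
By Lemma~\ref{lem:disjt edges} it suffices to produce, for any distinct points $x,y\in e$, some $n\in\N$ at which $(\lambda\psi)^n(x)$ and $(\lambda\psi)^n(y)$ lie on disjoint edges of $\partial\tzero$. I will argue by contradiction: suppose that for every $n$ both iterates lie on a common $\partial\tzero$-edge $e^{(n)}$. Each $\lambda\psi$ is a homeomorphism onto its image, so the preimages $E_n:=((\lambda\psi)^n)^{-1}(e^{(n)})\subseteq e$ form a nested decreasing family of sub-arcs, each containing $x$ and $y$; the goal is to show $|E_n|\to 0$, forcing $x=y$.

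The piecewise constant-speed hypothesis makes $\widetilde\psi$, and hence $\lambda\psi$, have constant arc-length derivative on every $G$-edge. Induction then shows that $(\lambda\psi)^n|_{E_n}$ is itself a constant-speed bijection onto $e^{(n)}$ with slope $\sigma_n$ equal to the telescoping product of the one-step slopes $\lambda K_{e^{(k)}}=|\lambda\widetilde\psi(e^{(k)})|/|e^{(k)}|$. Consequently $|E_n|=|e^{(n)}|/\sigma_n$, and the arc distance from $(\lambda\psi)^n(x)$ to $(\lambda\psi)^n(y)$ along $e^{(n)}$ is $\sigma_n\cdot d$, where $d>0$ is the arc distance from $x$ to $y$ on $e$. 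Since this distance cannot exceed $|e^{(n)}|\le M:=\max_{g\in G}|g|$, we obtain the \emph{a priori} bound $\sigma_n\le M/d$ for all $n$.

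The contradiction will come from showing $\sigma_n\to\infty$. To do so I will prove the following key refinement claim: for every $G$-edge $g$, the image $\widetilde\psi(g)$ is a concatenation of at least two edges of $\rrule(G)$. Granting this, the constant-speed relation $|e_i|/|g|=|g_i|/\sum_j|g_j|$ between the pre-image pieces $e_i\subset g$ and the lengths of the image $\rrule(G)$-edges $g_i$ forces every piece to be strictly smaller than $g$; finiteness of the prototile data then yields a uniform bound $r<1$ on all such ratios, whence $|E_n|\le r^n|e|\to 0$ and $\sigma_n=|e^{(n)}|/|E_n|\to\infty$, contradicting the bound of the previous paragraph.

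The main obstacle is the refinement claim itself: this is where the hypotheses do their work. The quasi-dual structure of $G$ forces each $g\in G_p$ to run from an interior vertex of $p$ to a boundary vertex, so $\widetilde\psi(g)$ connects an interior vertex of $S_p$ to a boundary vertex of $S_p$ inside $\rrule(G)_p$. The injectivity conditions then prevent this from happening in a single $\rrule(\pset)_p$-sub-tile: condition \eqref{pc3} keeps $S_p$ away from the corners of $p$, while \eqref{pc1}--\eqref{pc2} bound how much of $S_p$ can be packed into one sub-tile. Together these show that the interior-vertex and boundary-vertex endpoints of $\widetilde\psi(g)$ must lie in distinct sub-tiles, so the path $\widetilde\psi(g)$ crosses at least one sub-tile boundary and therefore splits into at least two $\rrule(G)$-edges, as required.
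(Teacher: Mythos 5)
There is a genuine gap in the case analysis. You reduce the lemma, via Lemma~\ref{lem:disjt edges}, to showing that for distinct $x,y\in e$ some iterate $n$ places $(\lambda\psi)^n(x)$ and $(\lambda\psi)^n(y)$ on \emph{disjoint} edges, and then you take as your contradiction hypothesis that the iterates lie on a \emph{common} edge $e^{(n)}$ for every $n$. But the negation of ``eventually on disjoint edges'' is not ``always on the same edge'': the two iterates can lie on two \emph{distinct adjacent} edges sharing a vertex $v_n$, for all $n$. This happens, for instance, whenever some point $z$ strictly between $x$ and $y$ on $e$ is sent by $(\lambda\psi)^n$ to a vertex of $\partial\tzero$; your $\sigma_n$ bound and the refinement claim say nothing about this situation, so the argument as written does not establish injectivity. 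The paper's proof devotes its entire second half to exactly this case: it shows that if $x_n$ and $y_n$ share a vertex $v_n$ for all large $n$, then $\|\psi^{(n)}(x_N)-\psi^{(n)}(v_N)\|\le\lambda^{-n}\max_t\diam(t)$, so $\psiinf(x_N)=\psiinf(v_N)$, and then invokes the same-edge case (applied to $x_N$ and $v_N$, which \emph{do} lie on a common edge) to conclude $x_N=v_N=y_N$ and hence $x=y$. You would need to add this step, e.g.\ by applying your same-edge argument separately to the pairs $(x,z)$ and $(z,y)$ where $z$ is the common preimage of the shared vertices.

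For the case you do treat, your mechanism is essentially the paper's: the piecewise constant-speed hypothesis turns the nested preimages $E_n$ into arcs whose lengths contract by a uniform factor $r<1$ (the paper's $\rho$), with the strict inequality coming from the fact that $\widetilde\psi(g)$ always consists of at least two edges of $\rrule(G)$. Two smaller caveats on your justification of that refinement claim. First, an edge of a quasi-dual graph need not run from an interior vertex to a boundary vertex; when $G_p$ has several interior vertices there are interior-to-interior edges, and for those your ``endpoints lie in distinct subtiles'' argument does not apply directly --- one instead uses \eqref{pc1} together with the tree structure of $G_p$ (two non-adjacent edges of the tree would be forced to share a subtile, violating \eqref{pc1}), or \eqref{pc2} to note that a single-subtile edge touching $\partial p$ would contradict the requirement that the subtile containing a shared vertex lie in the interior of $\supp(p)$. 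Second, the uniform bound $r<1$ should be taken over the finitely many edge/subedge ratios arising from $(G,S)$, which is fine, but worth stating since the one-step ratios $|l_n|/|e^{(n)}|$ depend on which subedge of $\lambda\psi(e^{(n)})$ the next edge $e^{(n+1)}$ happens to be.
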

\begin{proof}
Fix $x,y \in e \in E(\partial \tzero)$, and suppose that $\psi^{(\infty)}(x) = \psi^{(\infty)}(y)$. We will show $x = y$. Write $x_n := (\lambda\psi)^n (x)$ and $y_n := (\lambda \psi)^n(y)$. In order for $\psi^{(\infty)}(x) $ and $\psi^{(\infty)}(y)$ to be equal,  Lemma~\ref{lem:disjt edges} implies that for each $n \in \N$, $x_n$ and $y_n$ are either on the same or adjacent edges in $E(\partial \tzero)$.  This breaks into two cases: either they are on the same edge for all $n$ or there is an $N$ such that if $n \ge N$, $x_n$ and $y_n$ are on adjacent edges.

We first suppose that $x_n$ and $y_n$ are on the same edge $e_n$ in $\partial \tzero$ for all $n$. For each $n$, we have $e_{n+1} \subset \lambda\psi(e_n)$, and we let $l_n$ be the arc in $e_n$ for which $\lambda\psi(l_n) = e_{n+1}$. 
Set $\rho = \max_{n \in \N} (|l_n| / |e_n|)$. We can deduce that $\rho < 1$ by injectivity condition (\ref{pc2}) because for every $e \in E(\partial T)$, $\lambda\psi(e)$ is a union of at least $2$ edges in $E(\partial T)$. 
The constant-speed assumption on $\widetilde \psi$ extends to guarantee that $\lambda \psi$ is piecewise constant-speed as well.

%

For each $n \in \N$ and $k \le n$ we know that  $(\lambda \psi)^{-(n-k)}(e_n)$ and $(\lambda \psi)^{-(n-k)}(l_n)$ are in the same edge in $\partial \tzero$ and the constant-speed assumption implies that $$\frac{|(\lambda\psi)^{-n}(l_n)|}{|(\lambda\psi)^{-n}(e_n)|}=\frac{|(\lambda\psi)^{-(n-1)}(l_n)|}{|(\lambda\psi)^{-(n-1)}(e_n)|} = \cdots = \frac{|l_n|}{|e_n|}\le \rho.$$

Since $x, y \in (\lambda \psi)^{-n}(l_n)$ for all $n$, we have that
\begin{align*}
|x - y| \leq |(\lambda\psi)^{-n}(l_n)| &\leq \rho |(\lambda\psi)^{-n}(e_n)| = \rho |(\lambda\psi)^{-(n-1)}(l_{n-1})|\\
&\leq \rho^2 |(\lambda\psi)^{-(n-1)}(e_{n-1})| = \rho^2|(\lambda\psi)^{-(n-2)}(l_{n-2})| \leq \cdots \leq \rho^n|e_0|
\end{align*}
Since this is true for all $n$ and $\rho$ is strictly less than one we have shown $x = y$ when $x_n$ and $y_n$ are on the same edge of $\partial \tzero$ for all $n$.

%

Now suppose there is an $N$ such that if $n \ge N$ then $x_n$ and $y_n$ are on adjacent edges that share a vertex $v_n$.
Since $\psi$ is continuous, the edges containing $(\lambda \psi)^{n}(x_N)$ and $(\lambda \psi)^{n}(y_N)$ share the vertex $(\lambda \psi)^n (v_N)$ for all $n \in \N$. Then 
\[
\| (\lambda\psi)^{n}(x_N) - (\lambda\psi)^{n}(v_N) \| \leq \max_{t \in T} \diam (t)
\]
for all $n \in \N$. Multiplying both sides by $\lambda^{-n}$ then gives 
\[
\| \psi^{(n)}((\lambda \psi)^N(x)) - \psi^{(n)}(v_N)\| \leq \lambda^{-n} \max_{t\in T} \diam (t)
\]
for all $n \in \N$. This implies that $\psiinf(x_N) = \psiinf(v_N)$, and since $x_N$ and $v_N$ are always on the same edge the first case in this proof shows that $x_N =v_N$.  This argument shows that also $y_N = v_N$.  Since $(\lambda \psi)^N$ is injective this implies that $x = y$ and we have shown that $\psiinf$ is injective on edges of $\partial \tzero$.
\end{proof}


\begin{proof}[Proof of Proposition~\ref{thm: psi infinity is injective}]
Fix $x,y \in \partial T$ and suppose that $x \neq y$. If $\psin(x)$ and $\psin(y)$ are on the same or on disjoint edges for some $n$, then Lemmas \ref{lem:disjt edges} and \ref{lem:inj on edges} imply that $\psi^{(\infty)}(x) \neq \psi^{(\infty)}(y)$.  The only other option is that $\psin(x)$ and $\psin(y)$ are on adjacent edges for all $n$, but the proof of Lemma~\ref{lem:inj on edges} shows that this cannot happen if $x \neq y$. 
Thus $\psi^{(\infty)}$ is injective.
%
%
\end{proof}

\subsection{Existence of recurrent pairs with injective edge refinements}\label{subsec:exist}

In this section we prove that every singly edge-to-edge, primitive self-similar tiling $T$ with finite local complexity has a recurrent pair $(G,S)$ satisfying the conditions of Proposition \ref{thm: psi infinity is injective}. In this case the map $\psiinf$ is injective and the tiling $T$ gives rise to a fractal substitution tiling $\Tinf$. The construction of the recurrent pair often requires multiple iterations of the substitution $\sub$, so the expansion factor of $\Tinf$ is always a power of the expansion factor of $T$. 

\begin{thm}\label{thm:existence}
Suppose $T$ is a singly edge-to-edge, primitive self-similar tiling with FLC. Let $T$ have substitution $\omega$ and prototile set $\pset$.  Then $T$ has a recurrent pair $(G,S)$ satisfying the conditions of Proposition \ref{thm: psi infinity is injective}, and hence $T$ has a fractal realization.
\end{thm}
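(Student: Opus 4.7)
The plan is to explicitly construct a dual graph $G$ on $\pset$ and then select an appropriate subgraph $S \subset \rrule^N(G)$ for sufficiently large $N$. First I would build $G$: since $T$ is singly edge-to-edge, each prototile $p$ has well-defined Jordan arc edges and finitely many vertices. For each $p$, place one interior vertex $v_p$ in the interior of $\supp(p)$, and place one boundary vertex at the midpoint of each edge of $p$; single edge-to-edge adjacency guarantees that these midpoints form a $T$-consistent set. Connect $v_p$ to each boundary vertex by a piecewise linear Jordan arc lying in the open interior of $\supp(p)$ except at its terminal boundary vertex, with different arcs disjoint except at $v_p$. The result is a dual graph $G$ with each $G_p$ a star.

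Next I would use primitivity to choose $N$ large. Primitivity supplies some $M$ such that $\omega^M(p)$ contains every prototile type for every $p$; take $N$ a sufficiently large multiple of $M$ so that (a) the $N$-subtiles of $\rrule^N(\pset)_p$ have diameter much smaller than the minimum distance from $v_p$ to $\partial \supp(p)$ and from the interior of $G_p$ to the vertices of $p$, and (b) there are abundantly many subtiles of every type abutting each edge $f$ of $p$. Then for each $p$, select a ``central'' $N$-subtile $c_p$ whose closure lies in the interior of $\supp(p)$, and let $v_p'$ denote the interior vertex of its embedded copy of $G$. For each edge $f$ of $p$ choose a boundary vertex $m_{p,f}'$ of $\rrule^N(G)_p$ lying in the interior of $f$; $T$-consistency is arranged by fixing one representative per $\sim_T$-equivalence class of edges before making the choices. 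Since $\rrule^N(G)_p$ is connected (each $G$ drawn in a subtile is a connected star, and adjacent subtiles share boundary-edge midpoints through $T$-consistency), one may choose simple paths $\gamma_{p,f} \subset \rrule^N(G)_p$ from $v_p'$ to $m_{p,f}'$, pairwise disjoint except at $v_p'$. Setting $S_p = \bigcup_f \gamma_{p,f}$ and taking the piecewise constant-speed parameterization $\widetilde\psi_p\colon G_p \to S_p$, the pair $(G,S)$ is a recurrent pair: $S \subset \rrule^N(G)$, $S_p \sim G_p$ modulo the degree-two vertices along each $\gamma_{p,f}$, and boundary vertices of $G_p$ are mapped to boundary vertices of $S_p$ lying on the same edge of $p$.

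The verification of the injectivity conditions (I1)--(I4) then proceeds by careful routing. Conditions (I3) and (I4) are arranged by routing each $\gamma_{p,f}$ to approach $\partial \supp(p)$ only within a single terminal subtile $b_{p,f}$ adjacent to $f$, and otherwise through subtiles whose closures lie in the interior of $\supp(p)$ away from the vertices of $p$; this is possible because $N$ is large enough for subtile-scale maneuvering and because $G$ never meets a prototile vertex. For (I1) and (I2), distinct edges of $S_p$ either share only the central vertex $v_p'$ (in which case they meet only in the single subtile $c_p$, which lies in the interior of $\supp(p)$) or must lie in subtile patches with disjoint interiors; the latter is arranged by inserting a buffer of subtiles between distinct paths $\gamma_{p,f}$, which is feasible given the abundance of subtiles guaranteed by primitivity. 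Once all four conditions hold, Proposition \ref{thm: psi infinity is injective} yields injectivity of $\psiinf$, and Theorem \ref{prop:T infty is a tiling} delivers the fractal realization $\Tinf$.

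The hard part will be the simultaneous enforcement of all four injectivity conditions, especially (I1): non-adjacent edges of $S_p$ must lie in subtile patches with disjoint interiors, so one needs a uniform lower bound on the subtile-distance between different paths $\gamma_{p,f}$, together with control over where these paths meet $\partial \supp(p)$. This is where the interaction between primitivity (providing combinatorial freedom at scale $N$) and finite local complexity (reducing to finitely many prototile configurations) must be exploited. A secondary difficulty is arranging $T$-consistency of the boundary-vertex choices across prototiles that meet along shared edges, but this is handled at the outset by choosing one representative per $\sim_T$-class of prototile edges before drawing any paths.
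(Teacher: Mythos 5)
There is a genuine gap, and it sits exactly where you flagged ``the hard part.'' Your construction commits at the outset to $G$ being a dual graph (each $G_p$ a star with a single interior vertex) and then requires $S_p\subset\rrule^N(G)_p$ to be combinatorially a star as well: $k_p$ simple paths $\gamma_{p,f}$ from one interior vertex $v_p'$ to the $k_p$ boundary edges, pairwise disjoint except at $v_p'$ and with subtile patches whose interiors meet only in the single subtile $c_p$. You assert this routing is ``feasible given the abundance of subtiles guaranteed by primitivity,'' but no argument is given, and the paper itself does not believe this can be done in general: it proves the dual (star) realization only under the \emph{additional} hypothesis that the prototiles are convex (Theorem \ref{thm:dual self-similar}, where convexity is used to keep the patches $[l(v)]^{\rrule^N(\pset)_p}$ and $[l(w)]^{\rrule^N(\pset)_p}$ from colliding), and it explicitly lists the characterization of substitutions admitting dual substitution realizations as an open problem. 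For a non-convex prototile the disjoint-routing claim is a nontrivial planar vertex-disjoint-paths problem, and nothing in your proposal rules out a bottleneck forcing two of the $\gamma_{p,f}$ through the same subtile's interior vertex.

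The paper's proof avoids this problem entirely by \emph{not} insisting that $S$ be equivalent to the dual graph. Lemma \ref{lem:existdualish} routes the paths from $\partial p$ only as far as an interior copy $p_0$ of $p$, keeping them separated by an auxiliary system of disjoint ``wall'' arcs $H_p$ joining the vertices of $p$ to those of $p_0$ (this is the concrete mechanism your buffer heuristic is missing), and then connects the entry points on $\partial p_0$ by an \emph{arbitrary} tree pruned from $\rrule^N(G_0)_{p_0}$. The result $G_1$ is merely quasi-dual, generally with several interior vertices, so $G_1\nsim G_0$. The theorem is then rescued by iteration: Lemma \ref{lem:existence iteration} produces $G_{i+1}\subset\rrule^N(G_i)$ uniquely through the same subtile patches, and the sequence must stabilize because a quasi-dual tree on a prototile with $k_p$ boundary leaves has at most $k_p-2$ interior vertices of degree $\ge 3$ while this count strictly increases whenever $G_i\nsim G_{i+1}$. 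The recurrent pair is $(G_n,G_{n+1})$ for the first $n$ with $G_n\sim G_{n+1}$; it is quasi-dual but typically not dual. To repair your argument you would either need to prove the disjoint star routing for arbitrary (possibly non-convex) singly edge-to-edge prototiles --- which would in fact strengthen Theorem \ref{thm:dual self-similar} by removing its convexity hypothesis --- or adopt the paper's relaxation to quasi-dual graphs together with its stabilization argument.
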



The proof of the Theorem \ref{thm:existence} is an algorithm for constructing the recurrent pair $(G,S)$. The algorithm begins with Lemma~\ref{lem:existdualish} followed by a finite number of iterations of Lemma~\ref{lem:existence iteration}. Lemma~\ref{lem:existdualish} and \ref{lem:existence iteration} are implicitly assumed to have the assumptions of Theorem \ref{thm:existence}. 

Recall that we say that a graph $G$ satisfies the injectivity conditions for $N$-subtiles if conditions \eqref{pc1} - \eqref{pc4} of Definition \ref{injectivity_conditions} are satisfied with respect to the subtiles $\rrule^N(\pset)$. Also recall that we say a graph $G_0$ is $T$-consistent if it satisfies Definition \ref{consistent}.

\begin{lemma}\label{lem:existdualish}
Let $G_0$ be a $T$-consistent dual graph in $\pset$. Then there exists $N \in \N$ and a $T$-consistent quasi-dual graph $G_1 \subset \rrule^N(G_0)$ such that $G_1$ satisfies the injectivity conditions for $N$-subtiles.
\end{lemma}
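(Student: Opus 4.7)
The plan is to construct $G_1$, for each prototile $p$, as a subtree of $\rrule^N(G_0)_p$ chosen so that it picks out exactly one boundary vertex per prototile edge and joins these through a small tree of $N$-subtiles routed through the deep interior of $p$. Because $G_0$ is a dual graph, $\rrule^N(G_0)_p$ is combinatorially the graph whose vertices are the center points of all $N$-subtiles of $p$ together with their shared edge-midpoints, with each center joined to its adjacent midpoints; so choosing a subgraph reduces to choosing a tree of $N$-subtiles connected via shared edges.

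First, I would use the $\lambda^{-N}$ rescaling to pick $N \in \N$ large enough that each prototile $p$ contains a thick interior band of $N$-subtiles avoiding $\partial p$ and all corners, and so that every prototile edge of $p$ is subdivided into several $N$-subtiles. Second, I would $T$-consistently select a single boundary vertex per prototile edge: since $T$ is singly edge-to-edge and self-similar, the $N$-subdivision of a shared edge is identical when viewed from either adjoining prototile, so a single midpoint on that subdivision determines compatible boundary vertices $v_e$ on both sides. Third, inside each prototile $p$ I would construct a Steiner-style tree $\mathcal{T}_p$ of $N$-subtiles whose leaves are exactly the $N$-subtiles containing the chosen $v_e$ for each prototile edge $e$ of $p$, with all non-leaf subtiles lying in the interior band, all branch subtiles having branching multiplicity at least $3$, and the paths joining different leaves pairwise disjoint except at those branch subtiles. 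The graph $G_1$ is then the subgraph of $\rrule^N(G_0)_p$ obtained by taking, inside each subtile of $\mathcal{T}_p$, the portion of the embedded copy of $G_0$ that connects the center to the midpoints lying on edges shared with adjacent subtiles of $\mathcal{T}_p$ (including the outward midpoint $v_e$ when the subtile is a leaf).

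By construction $G_1$ is $T$-consistent, a tree in each prototile, and has exactly one boundary vertex per prototile edge; its combinatorial interior vertices (those of degree at least $3$) occur exactly at the branch subtiles of $\mathcal{T}_p$, so $G_1$ is quasi-dual. The injectivity conditions follow: \eqref{pc1} and \eqref{pc2} because the sub-paths of $\mathcal{T}_p$ associated to distinct edges of $G_1$ meet only at interior branch subtiles; \eqref{pc3} because no corner subtile belongs to $\mathcal{T}_p$; and \eqref{pc4} because only the leaf subtile of each branch touches $\partial p$, and does so along a single $N$-subtile edge. The main obstacle is step three, namely the existence of such a Steiner tree of $N$-subtiles with prescribed leaves and forced avoidances. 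I would handle this by invoking primitivity of $\sub$, which guarantees that for sufficiently large $N$ the interior band of $N$-subtiles is arbitrarily rich and multiply path-connected, so the finitely many leaves, forced edge approaches, and corner exclusions can all be routed simultaneously through disjoint interior corridors; the FLC and singly edge-to-edge assumptions ensure that the routing data to be avoided remains uniformly bounded across the finite prototile set.
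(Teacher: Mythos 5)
Your overall strategy matches the paper's: pick one $T$-consistent boundary vertex per prototile edge, route a path of $N$-subtiles from each such vertex into the deep interior of $p$, and join these paths by a tree whose branch points become the interior vertices of the quasi-dual graph. The difference is that the paper actually constructs the routing, whereas you flag the existence of the ``Steiner-style tree'' $\mathcal{T}_p$ with pairwise disjoint corridors as the main obstacle and then dispose of it by asserting that primitivity makes the interior band ``arbitrarily rich and multiply path-connected.'' That assertion is the whole content of the lemma: multiple path-connectivity of a planar region does not by itself produce $|V_p|$-many pairwise disjoint corridors of $N$-subtiles, one per prototile edge, whose subtile \emph{patches} $[e]^{\rrule^N(\pset)_p}$ (not merely the underlying arcs) are disjoint except at designated branch subtiles, avoid all corner subtiles, and meet $\partial p$ in connected sets. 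Conditions \eqref{pc1}--\eqref{pc4} are statements about these patches, so the disjointness has to be engineered at the patch level, and primitivity as you invoke it gives no mechanism for that.

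The paper closes exactly this gap with a concrete topological device. Primitivity is used to locate an $n$-subtile $p_0$ that is a scaled copy of $p$ itself, interior to $\rrule^n(\pset)_p$; the region $p\setminus\int(p_0)$ is an annulus, and one draws $|V_p|$ disjoint ``spokes'' $H_p$ joining each vertex of $p$ to its counterpart on $p_0$. These spokes cut the annulus into sectors, one per prototile edge, and a quantitative choice of $N$ (via an $\varepsilon$ measuring the separation between spokes and between spokes and the relevant boundary pieces, with $\lambda^{-N}\max_p\diam p<\varepsilon/3$) forces the subtile patch around each path $x_e$ to stay inside its own sector and away from the corners. Disjointness of the patches for distinct edges, and conditions \eqref{pc3} and \eqref{pc4}, then come for free; the connecting tree is obtained by pruning $\rrule^N(G_0)_{p_0}$ inside $p_0$. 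If you replace your appeal to ``disjoint interior corridors'' with this annulus-and-spokes argument (or an equivalent explicit separation argument), your proof becomes essentially the paper's. As written, the key existence step is asserted rather than proved.
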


\begin{proof}
By primitivity, there exists $n \in \N$ such that for each $p \in \pset$, there is a copy of $\ptile$ interior to $\rrule^n(\pset)_{\ptile}$. More precisely, there exists $x \in \R^2$ such that the $n$-subtile $p_0 = \lambda^{-n} p+x$ has support contained in the interior of $\rrule^n(\pset)_{\ptile}$.

Now fix $\ptile \in \pset$ and let $V_p$ denote the vertices of $p$. Since $p_0 \subset \int (p)$, there exists a geometric graph $H_p \subset p \setminus \int (p_0)$ consisting of $|V_p|$ disjoint edges, where the edge associated to $v \in V_p$ has endpoints $v \in p$ and $v^0=\lambda^{-n}v + x \in p_0$. Let $V_p^0$ denote the collection of such vertices. Note that the geometric graph $H_p$ connects the vertices of $p$ with their corresponding vertices $p_0$, and the existence of $H_p$ is guaranteed since $p \setminus p_0$ is an annulus. Let $H$ be the disjoint union $H:=\disjoint H_p$.

Let $M \in \N$ be large enough so that for each $p \in \pset$ there are at least three $M$-subtile edges along each edge of $p$ and three $M$-subtile edges along each edge of $p_0$. Now let $E(H_p)$ denote the set of edges of $H_p$. Set $\varepsilon$ to be less than the minimum of
\begin{align*}
\varepsilon_1 &= \min_{p \in \pset} \{ d(e,f) \mid e\neq f \in E(H_p)\} \text{ and} \\
\varepsilon_2 &= \min_{p \in \pset} \{ d(e,f) \mid e \in E(H_p), f \in \partial p \setminus \partial ([V_p \cup V_p^0]^{\rrule^M(\pset)_p}) \}
\end{align*}
Then since $\lambda > 1$, we can choose $N \geq M$ such that $\lambda^{-N} \max_{p \in \pset} \diam p < \varepsilon / 3$.

Let $O_B=\cup_{p \in \pset}\{\partial p \setminus \partial ([V_p]^{\rrule^M(\pset)_p})\}$ and let $I_B=\partial p_0 \setminus \partial ([V_p^0]^{\rrule^M(\pset)_p})$. 
Choose a $T$-consistent set of vertices contained in the set $O_B \cap \rrule^N(G_0)$ with exactly one vertex for each edge $e \in \pset$, and denote the set of vertices by $O_V$. By our choice of $N$ with respect to $\ep >0$, for every edge $e \in \pset$ there is at least one path in $\rrule^N(G_0)$ connecting $O_V$ with any vertex in $I_B \cap \rrule^N(G_0)$ that does not cross any edge of $H_p$. We choose one such path, which we denote $x_e$, with the additional restriction that once it enters an interior $N$-subtile it continues through interior subtiles until it reaches its destination on the boundary of $p_0$. Note that $x_e$ must connect the edge $e \in E(p)$ to its counterpart $\lambda^{-n}e + x$ in $p_0$, and $[x_e]^{\rrule^N(\pset)_p}$ must intersect the boundary of $p$ in a connected set.  Moreover, for $e \neq f \in p$, we have $[x_e]^{\rrule^N(\pset)_p} \cap [x_f]^{\rrule^N(\pset)_p}=\varnothing$. Let $X_p=\cup_{e \in p} x_e$.  Although the component graphs of  $X = \disjoint X_p$ are disconnected, $X$ satisfies the injectivity conditions for $N$-subtiles.

In order to complete the proof, for each $p \in \pset$, we need to connect the points $X_p \cap I_B \subset \partial p_0$ by a tree in $p_0$. To that end, consider the connected graph $\rrule^{N}(G_0)_{p_0}$, which we can prune until we have a tree connecting the points $X_p \cap I_B$ with no dangling vertices. Let $Y_p$ denote this tree.

Set $G_1 := \disjoint X_p \cup Y_p$. Then, by construction, the graph $G_1$ is quasi-dual and satisfies the injectivity conditions for $N$-subtiles.
\end{proof}

When $G' \subset \rrule^N(G)$ is a geometric graph we have been interested in the patches of $N$-subtiles intersecting the edges of $G'$.  The next lemma states that the paths in $\rrule^N(G')$ running through those same patches are unique when $G$ and $G'$ are both quasi-dual.

\begin{lemma}\label{lem:existence iteration}
Suppose that $G$ and $G' \subset \rrule^N(G)$ are quasi-dual graphs in $\pset$ such that, for some $M \in \N$, $G'$ satisfies the injectivity conditions for $M$-subtitles. Then there is a unique quasi-dual graph $H \subset \rrule^N(G')$ such that $[G'_p]^{\rrule^N(\pset)_p}=[H_p]^{\rrule^N(\pset)_p}$ for all $p \in \pset$, which satisfies the injectivity conditions for $(M+N)$-subtiles.
\end{lemma}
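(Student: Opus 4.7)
The construction of $H$ will proceed prototile by prototile and then be followed by verification of the required properties. Fix $p\in\pset$ and let $P_p:=[G'_p]^{\rrule^N(\pset)_p}$ denote the patch of $N$-subtiles traversed by $G'_p$. The first step is to pin down the structural picture of $P_p$. Because $G'$ is quasi-dual and satisfies the $M$-subtile injectivity conditions, the patches of distinct edges of $G'_p$ overlap only in the single subtile housing their shared vertex, and $G'_p$ avoids the vertices of $p$. The $N$-subtile adjacency graph of $P_p$ is therefore itself a tree that mirrors the tree structure of $G'_p$.

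Next I would introduce the ``raw'' refined graph $R_p := \rrule^N(G')_p \cap \supp(P_p)$, which places a scaled copy $\lambda^{-N}G'_q + v$ of $G'_q$ inside each $N$-subtile $s = \lambda^{-N}q + v$ of $P_p$. Each such scaled copy is quasi-dual, and by $T$-consistency the copies in subtiles of $P_p$ sharing an edge meet at exactly one boundary vertex on that edge. Gluing trees along single vertices in a tree pattern produces a tree, so $R_p$ is itself a tree, equipped with ``dangling'' branches extending to boundary vertices of subtiles of $P_p$ that are either shared with subtiles outside $P_p$, or lie on $\partial p$ along edges of $p$ that $G'_p$ does not traverse.

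I would then define $H_p$ as $R_p$ with all such dangling branches pruned away, keeping exactly the subtree that connects the boundary vertices on $\partial p$ at which $G'_p$ reaches the boundary; combinatorial degree-$2$ vertices arising from the gluing are invisible as graph vertices in the sense of Definition~\ref{equiv_graphs}. The claims $[H_p]^{\rrule^N(\pset)_p}=P_p$ and that $H_p$ is a $T$-consistent quasi-dual graph then follow directly from the construction and from the $T$-consistency of $G'$. Uniqueness is forced because $R_p$ is a tree and the minimal subtree of $R_p$ connecting the required boundary vertices on $\partial p$ is unique; any competing $H'\subset\rrule^N(G')$ with the same patch support must contain this subtree exactly, for otherwise it either exits $P_p$, acquires an interior leaf (violating quasi-duality), or fails to visit every subtile of $P_p$.

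Finally I would verify the injectivity conditions \eqref{pc1}--\eqref{pc4} for $H$ at the $(M+N)$-subtile level by transferring the $M$-subtile conditions on $G'$. Each edge of $H_p$ lies inside some scaled copy $\lambda^{-N}G'_q + v$ and thus corresponds to an edge $e'$ of $G'_q$; the $M$-subtile patch of $e'$ in $q$ scales and translates to the $(M+N)$-subtile patch of that edge in $p$, so the avoidance, intersection, and boundary-connectedness properties rescale directly. The main obstacle will be checking \eqref{pc1} and \eqref{pc2} \emph{across} the subtile gluing: two edges of $H_p$ lying in distinct but adjacent $N$-subtiles of $P_p$ must have their $(M+N)$-subtile patches intersect only at the single subtile containing their shared vertex. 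Handling this case requires invoking injectivity condition \eqref{pc4} at the $M$-subtile level for $G'$ to control how the two scaled copies of $G'$ approach one another across the shared edge of $P_p$, thereby ruling out any spurious patch overlap at the finer level.
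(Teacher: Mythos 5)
Your construction is the same as the paper's: restrict $\rrule^N(G')$ to the patch of $N$-subtiles met by $G'_p$, prune to the unique quasi-dual tree whose dangling vertices sit on the boundary subedges that $G'_p$ reaches, and transfer the injectivity conditions by rescaling. If anything you supply more justification than the paper, whose proof asserts uniqueness in one sentence (from quasi-duality and the singly edge-to-edge property) and the passage from $M$- to $(M+N)$-subtile injectivity in another; your tree-gluing argument for uniqueness and your explicit treatment of the cross-subtile case in verifying \eqref{pc1}--\eqref{pc2} flesh out exactly those two assertions.
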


\begin{proof}
For each $p \in \pset$, consider the patch of $N$-subtiles $Q_p:=[G'_p]^{\rrule^N(\pset)_p}$ and boundary $N$-subedges $E_p:=\{e \in \rrule^N(\pset)_p \cap \partial p \mid e \cap G' \neq \varnothing\}$. Since $G'$ is quasi-dual  and $T$ is singly edge-to-edge there is a unique quasi-dual graph $H_p$ with edges in $\rrule^N(G')_p$ through the patch $Q_p$ with dangling vertices exactly on the edges $E_p$. Let $H:=\disjoint H_p$. By construction, $H$ is a quasi-dual graph satisfying $[G'_p]^{\rrule^N(\pset)_p}=[H_p]^{\rrule^N(\pset)_p}$ for all $p \in \pset$.

Since $G'$ satisfies the injectivity conditions for $M$-subtiles and $H \subset \rrule^N(G')$, it follows that $H$ satisfies the injectivity conditions for $(M+N)$-subtiles.
\end{proof}

We can now prove Theorem~\ref{thm:existence}.

\begin{proof}[Proof of Theorem~\ref{thm:existence}]
Let $G_0$ and $G_1$ be the quasi-dual graphs from Lemma \ref{lem:existdualish}. If $G_0 \sim G_1$, the proof is complete with $G=G_0$ and $S=G_1$. If not, apply Lemma~\ref{lem:existence iteration} with $G_0=G$ and $G_1=G'$ to obtain $G_2:=H$. 

If $G_1 \sim G_2$ then we are done with $G=G_1$ and $S=G_2$. If not, we continue iterating this process of applying Lemma~\ref{lem:existence iteration} to the last two graphs in the sequence. We claim that there exists $n \geq 2$ such that $G_n \sim G_{n+1}$. 

Since each prototile has only a finite number of edges and each graph $G_j$ is quasi-dual, the number of interior vertices (with degree greater than two) is bounded by half the number of edges of the prototile. Suppose $G_i \nsim G_{i+1}$, for $i \geq 2$, then there exists $p \in \pset$ such that $(G_{i})_p \nsim (G_{i+1})_p$. Then the construction in Lemma \ref{lem:existence iteration} implies that the number of interior vertices of $(G_{i+1})_p$ is greater than the number of interior vertices of $(G_{i})_p$. Since the number of interior vertices of each prototile is bounded, this process must end. So there exists $n \in \N$ such that $G_n \sim G_{n+1}$, and we let $G:=G_n$ and $S:=G_{n+1}$.

Lemma~\ref{lem:existdualish} shows that $G_1$ satisfies the injectivity conditions for $N$-subtiles. Lemma \ref{lem:existence iteration} implies that each graph $G_i$ in the algorithm satisfies the injectivity conditions for $iN$-subtiles. Thus, every graph in the sequence continues to satisfy the injectivity conditions. In particular, $S \subset \rrule^{N}(G)$ satisfies the injectivity conditions for $(n+1)N$-subtiles. Moreover, since $G_0$ is a dual tiling and $G \subset \rrule^{nN}(G_0)$ and $S \subset \rrule^{(n+1)N}(G_0)$ the map $\widetilde{\psi}$ can be chosen to be piecewise constant-speed. Thus, $(G,S)$ satisfies the conditions of Proposition \ref{thm: psi infinity is injective} and Theorem \ref{thm:existence} is proved.
\end{proof}

\subsection{Self-similar combinatorial dual tilings} \label{subsec:self-similar dual}

The construction in Section \ref{subsec:exist} invites the question of when the $(G,S)$ recurrent pair are both dual graphs. We give a sufficient condition in the following result.

\begin{thm}\label{thm:dual self-similar}
Let $T$ be a singly edge-to-edge, primitive self-similar tiling whose tiles are all convex.  Then the combinatorial dual tiling of $T$ has a self-similar realization.
\end{thm}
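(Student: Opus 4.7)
The plan is to produce a recurrent pair $(G,S)$ satisfying the hypotheses of Proposition \ref{thm: psi infinity is injective} in which \emph{both} $G$ and $S$ are dual graphs, not merely quasi-dual. Once we have this, the conclusions follow quickly: by the remark after Lemma \ref{tiling_from_graph}, $\tzero = T_G$ is the labelled combinatorial dual of $T$; by Theorem \ref{prop:T infty is a tiling}, $\Tinf$ is self-similar; and the bijection $t \mapsto t^{(\infty)}$ preserves tile-to-tile adjacency (since $\psiinf$ is a homeomorphism), so $\Tinf$ is combinatorially identical to $\tzero$. Hence $\Tinf$ realizes the combinatorial dual of $T$ as a self-similar tiling.

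I would construct $G$ directly from the convexity hypothesis: in each prototile $p$ choose an interior anchor point $c_p$ (e.g.\ the centroid) and let $G_p$ be the star consisting of straight-line segments from $c_p$ to the midpoint of each edge of $p$. Convexity of $p$ guarantees every such segment lies inside $p$, and the singly edge-to-edge hypothesis ensures that the set of edge midpoints is $T$-consistent; thus $G = \disjoint G_p$ is a dual graph.

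The substantive work is the construction of $S$, following the spirit of Lemma \ref{lem:existdualish}. By primitivity, choose $N$ large enough that for each $p$ a scaled copy $p_0 = \lambda^{-N}p + y_p$ lies in the interior of $\rrule^N(\pset)_p$, and let $v_p$ be the image of $c_p$ in $p_0$. This $v_p$ will be the unique interior vertex of $S_p$. For each edge $e$ of $p$ I would select a path $x_e \subset \rrule^N(G)_p$ from the boundary vertex $m_e$ on $e$ to $v_p$. Here is where convexity is essential: the straight segments $\overline{v_p m_e}$ all lie in $p$ and are pairwise disjoint except at $v_p$, so each admits a tubular neighborhood of radius bounded below by some $\varepsilon>0$, and these tubes are pairwise disjoint except in a small ball about $v_p$. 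Taking $N$ large enough that the subtile diameter $\lambda^{-N}\max_p\diam p$ is much smaller than $\varepsilon$, each segment can be approximated by a path $x_e \subset \rrule^N(G)_p$ which stays inside its tube. The patches $[x_e]^{\rrule^N(\pset)_p}$ then satisfy injectivity conditions (\ref{pc1}) and (\ref{pc2}) because their tubes are disjoint, and (\ref{pc3}), (\ref{pc4}) because we route endpoints to edge midpoints, bounded away from corners of $p$. Setting $S_p := \bigcup_e x_e$ gives a dual subgraph of $\rrule^N(G)$. Since $G_p$ and $S_p$ are both stars with the same number of leaves we have $G_p \sim S_p$, and the homeomorphism $\widetilde\psi_p$ can be chosen piecewise constant-speed, so Proposition \ref{thm: psi infinity is injective} yields injectivity of $\psiinf$.

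The main obstacle is the simultaneous geometric routing of the $k$ paths $x_e$ through a single prescribed interior vertex $v_p$ while maintaining pairwise disjointness and the injectivity conditions. Convexity is precisely what makes this achievable: in a non-convex prototile the straight-line skeleton from $v_p$ to the boundary vertices need not lie in $p$, and any replacement routing will generically force paths to share additional interior vertices, yielding at best a quasi-dual graph. A minor secondary issue, handled exactly as in Lemma \ref{lem:existdualish}, is to choose the boundary endpoints of the paths from a $T$-consistent set so that $S$ remains $T$-consistent throughout.
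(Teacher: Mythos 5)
Your proposal is correct and follows essentially the same route as the paper's proof: a dual star graph $G$ of straight segments from an interior point to one boundary vertex per prototile edge, primitivity to locate an interior $N$-subtile copy $p_0$ carrying the single interior vertex of $S$, and convexity to route pairwise disjoint paths of $\rrule^N(G)$ along the straight-line segments so that the injectivity conditions (\ref{pc1})--(\ref{pc4}) and Proposition \ref{thm: psi infinity is injective} apply. The only (cosmetic) difference is that you guide each path by a tube around the segment from the interior vertex to the boundary vertex, whereas the paper uses the patch of $N$-subtiles meeting the line from the boundary vertex to its counterpart on $\partial p_0$; both hinge on the same convexity argument.
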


\begin{proof}
Let $G$ be a $T$-consistent dual graph $\pset$, embedded such that $G_p$ has exactly one boundary vertex in the interior of each edge of $p$ and there is one interior vertex that is connected by a straight-line edge to each boundary vertex.  
By primitivity we can choose $N$ such that
\begin{enumerate}
\item $\rrule^N(\pset)_p$ contains a tile of type $p$ in its interior for each $p \in \pset$ and
\item there is a $T$-consistent set of boundary vertices in $\rrule^N(G)$ such that if $v$ is in this set and in $p\in \pset$, then 
$\supp[v]^{\rrule^N(\pset)_p}$ does not contain any vertex of $p$.  Denote by $V_B$ such a $T$-consistent set that has exactly one vertex per prototile edge.
\end{enumerate}

We let the vertex set of $S_p$ be $(V_B)_p$ along with a single interior vertex $v_{p,int}$ chosen from the interior of an $N$-subtile $p_0$ of type $p$ lying in the interior of $\rrule^N(\pset)_p$.  We construct the edges of $S_p$ as follows.
Let $l(v)$ be the line connecting the boundary vertex $v \in S_p$ to its counterpart in $p_0$. We take as the edge in $S_p$ a path in $\rrule^N(G)$ through $[l(v)]^{\rrule^N(\pset)_p}$ connecting $v$ to $v_{p,int}$.  

 Notice that if $v \neq w$ are boundary vertices in $p$, by convexity the $\rrule^N(\pset)_p$-patches intersecting $l(v)$ and $l(w)$ cannot intersect except at $p_0$ and perhaps on their boundaries, and thus (\ref{pc1}) and (\ref{pc2}) are satisfied.  $G$ was already dual and thus quasi-dual and (\ref{pc3}) and (\ref{pc4}) follow from the construction and by convexity. The map $\widetilde \psi$ can be chosen to be piecewise constant-speed so that all the conditions of Proposition \ref{thm: psi infinity is injective} hold.
 
 Thus we have a recurrent pair $(G,S)$ for which $T_G = \tzero$ is a labelled dual tiling of $T$. Since $\Tinf$ is combinatorially equivalent to $T_G$ the result follows.
\end{proof}

%
%

\section{Edge contraction map and Hausdorff dimension of $\partial \Tinf$}\label{FractalDimension_sec}

We mentioned earlier that it is possible to see the edge refinement map $\widetilde\psi$ as a substitution rule on prototile supports.  We make this precise here, using the formalism of digit sets and the tiling contraction map used to describe a substitution rule.  For simplicity we assume that $S \subset \rrule(G)$ but all results extend to the general case of $S \subset \rrule^N(G)$.

Once this is done it is possible to see the edges of $\Tinf$, or more precisely their counterparts in $\pset$, as the invariant set of a graph iterated function system.  This system satisfies the strong open set condition, so \cite{MW} implies that the Hausdorff dimension of its attractor (our edges) is the Mauldin-Williams dimension. In this way we obtain a geometric invariant on the set of fractal realizations of a particular tiling.  

\subsection{Edge refinement as a contraction map $\rrule^E$.}\label{sec:edge contraction}
Suppose that $(G,S)$ is a recurrent pair for a substitution $\sub$ with expansion $\lambda$.

Let the edge set of $G$, denoted $E(G)$, be the collection of all edges in $G$:  $E(G) = \bigcup_{p \in \pset} E(G_p)$.  Analogous to the tiling contraction case, we assign a copy of $H(\R^2)$ for each $e \in E(G)$ and define $X^E = \disjointE H(\R^2)$.
Consider a fixed $e \in E(G_p)$ and consider its counterpart $\widetilde\psi(e) \subset S$.
By definition we know that  $\widetilde \psi(e)$ is a subset of $\rrule(G)_p$ and thus is comprised of subedges of the form $\lambda^{-1}(f + d)$, where $f$ is an edge in a prototile $q$ and $d$ is some element of the digit set $D_{pq}$.  We define the matrix $M^E$ by letting $M^E_{ef}$ be the number of copies of $f$ appearing in $\widetilde\psi(e)$ for all $f \in E(G)$.  This makes $M^E$ a  nonnegative integer matrix whose rows and columns are indexed by $E(G)$.

Suppose that $f$ is an edge in the prototile $q$.  The digit set $D^E_{ef}$ is obtained by taking all of the digits in $D_{pq}$ giving copies of $f$ in $\widetilde\psi(e)$:
\begin{equation}
D^E_{ef} = \{d \in D_{pq} \,\,|\,\, \widetilde\psi(e) \cap \lambda^{-1}\left(f + d \right) = \lambda^{-1}\left(f + d \right) \}
\end{equation}

For $B \in X^E$ and $e \in E(G)$ we define
\begin{equation}
\rrule^E(B)_e = \bigcup_{q \in \pset}\bigcup_{f \in E(G_q)} \lambda^{-1}\left(B_f + D^E_{ef} \right)
\end{equation}

With this definition we see that for $e \in E(G)$, 

\begin{equation*}
\widetilde\psi(e) = \bigcup_{q \in \pset}\bigcup_{f \in E(G_q)} \lambda^{-1}\left(f + D^E_{ef} \right)= \left(\rrule^E\left(\sqcup_{f \in E(G)} f\right)\right)_e
\end{equation*}

We can iterate $\rrule^E$ but not $\widetilde\psi$, since the latter is only defined for edges $e \in E(G)$. However we can define $\wpsin(e)$ for $n \in \N \cup \{\infty\}$ using $\psin$: take any edge $e' \in \tzero$ for which $e' = e + x$, then $\wpsin(e) = \psin(e') - x$.  It is a tedious, but not difficult, chase through notation to verify that  if $B = \sqcup_{f \in E(G)} f$, then
\begin{equation*}
\wpsin(e) = \left((\rrule^E)^n(B)\right)_e
\end{equation*}

Let $\KSinf \subset X^E$ represent the attractor of $\rrule^E$, and let $\Sinf \subset X$ represent the canonical projection of $\KSinf$ onto the support of the prototiles: $\Sinf_p = \bigcup_{e \in E(G_p)} \KSinf_e$.

\begin{prop}
Let $(G,S)$ be a recurrent pair and $T$ a self-similar tiling for the substitution $\sub$.  Then $\psiinf(\partial \tzero) = \Gamma(T, \Sinf)$.  If $\psiinf$ is injective, then $\partial \Tinf = \partial T_\Sinf$.
\end{prop}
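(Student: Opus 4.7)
The plan is to prove the first equality edge-by-edge and then deduce the second assertion from injectivity. First I would identify the pointwise limit $\wpsiinf(e)$ with the attractor-piece $\KSinf_e$ for each $e \in E(G)$. The identity $\wpsin(e) = ((\rrule^E)^n(B))_e$ stated just before the proposition (for $B = \sqcup_{f \in E(G)} f$) reduces this to matching two modes of convergence: on one hand $\wpsin(e) \to \wpsiinf(e)$ uniformly, via the Cauchy argument that produced $\psiinf$; on the other hand the graph-IFS $\rrule^E$ is a disjoint union of contractions by $\lambda^{-1}$, so its iterates converge in Hausdorff distance to the attractor $\KSinf$ componentwise. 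The two limits must therefore coincide, giving $\wpsiinf(e) = \KSinf_e$, and hence $\bigcup_{e \in E(G_\ptile)} \wpsiinf(e) = \Sinf_\ptile$ by definition of $\Sinf$.

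Next I would establish translation equivariance of $\psiinf$: for $z \in \partial\tzero$ contained in $t = \ptile + x_t \in T$, the relation $\psiinf(z) = \wpsiinf(z - x_t) + x_t$ holds. At $n = 1$ this is just the definition $\psi(z) = \widetilde\psi_\ptile(z - x_t) + x_t$; the inductive step rests on Lemma \ref{MLD_lemma}(2), which guarantees that $(\lambda\psi)^k(z)$ stays in the same $T$-tile $t$ for all $k$, so the translation $x_t$ commutes through every iterate of $\lambda\psi$ and survives the limit. Combining this with the first step and unwinding the definition of $\Gamma(T,\Sinf)$ from \eqref{graph_from_prototiles} yields
\begin{align*}
\psiinf(\partial\tzero)
&= \bigcup_{\ptile \in \pset} \bigcup_{\{t \in T \,:\, t \text{ of type } \ptile\}} \bigcup_{e \in E(G_\ptile)} \bigl(\wpsiinf(e) + x_t\bigr) \\
&= \bigcup_{\ptile \in \pset} \bigcup_{\{t \in T \,:\, t \text{ of type } \ptile\}} \bigl(\Sinf_\ptile + x_t\bigr) = \Gamma(T, \Sinf),
\end{align*}
which is the first assertion.

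For the second assertion, assume $\psiinf$ is injective. Definition \ref{definition_tinfty} together with Theorem \ref{prop:T infty is a tiling} gives $\partial\Tinf = \psiinf(\partial\tzero)$. Since $\Tinf$ is a tiling, the arcs comprising $\psiinf(\partial\tzero) = \Gamma(T,\Sinf)$ close up into empty Jordan curves, so the hypothesis of Lemma \ref{tiling_from_graph} is satisfied by the $T$-consistent graph $\Sinf$ on $\pset$, and that lemma delivers $\Gamma(T,\Sinf) = \partial T_\Sinf$. Chaining these two identities gives $\partial\Tinf = \partial T_\Sinf$.

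The main obstacle is the very first step: aligning the a priori distinct notions of limit (pointwise limit of the edge refinement maps $\wpsin$ versus Hausdorff limit of the graph-IFS iterates $(\rrule^E)^n$) on the interior of a single edge. Once that identification is in hand, translation equivariance propagates the identity from the prototile level to the whole plane, and the remaining steps are bookkeeping with the definitions of $\Gamma$, $T_\Sinf$, and $\Tinf$.
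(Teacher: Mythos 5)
Your proposal is correct and follows essentially the same route as the paper: the paper's entire written proof is your first step, namely identifying $\wpsiinf(e)$ with $\KSinf_e$ by exchanging the limit over $n$ with the disjoint union over edges and invoking the attractor property of $\rrule^E$ (via the identity $\wpsin(e) = ((\rrule^E)^n(B))_e$ recorded just before the proposition). The translation-equivariance bookkeeping and the deduction of $\partial\Tinf = \partial T_{\Sinf}$ from injectivity, which you spell out, are left implicit in the paper, so your version is simply a more detailed rendering of the same argument.
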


The first part says that even if $\Gamma(T, \Sinf)$ is not the boundary of a tiling, it still corresponds to $\psiinf(\partial \tzero)$.  The second part does not quite extend to imply that  $T_{\Sinf}$ and $\Tinf$ are actually equal, because there is the possibility that the tile labels differ slightly if the tiles of $T_{\Sinf}$ intersect larger $T$-patches than those that $\tzero$ did, since, by definition, those endow the tiles of $\Tinf$ with their labels.

\begin{proof}
We must show that if $e \in E(G)$, then $\wpsiinf(e)=\KSinf_e$. We have
\[
\disjointE \wpsiinf(e) = \disjointE\left(\lim_{n \to \infty} \wpsin(e)\right) = \lim_{n \to \infty} \disjointE \wpsin(e) =  \lim_{n \to \infty} \rrule_E^n\left(\disjointE e\right) = \KSinf,
\] since $\KSinf$ is the attractor of $\rrule_E$.
\end{proof}

\subsection{Hausdorff dimension}

We will prove the following theorem.

\begin{thm}\label{thm:dim}
Suppose that $(G,S)$ is a recurrent pair for a tiling $T$ with substitution $\sub$ having  expansion factor $\lambda$ which satisfies the injectivity conditions of Definition \ref{injectivity_conditions}.
Let $M^E$ be the edge substitution matrix with largest eigenvalue $\lambda_E$. Then the Hausdorff dimension of $\partial \Tinf$ is $\frac{\ln \lambda_E}{\ln \lambda}$.
\end{thm}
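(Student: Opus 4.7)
The plan is to exhibit $\rrule^E$ as a graph-directed iterated function system (GIFS) in the sense of Mauldin and Williams \cite{MW} and then apply their dimension formula. The directed multigraph has vertex set $E(G)$, and for each pair $(e,f)$ and each $d \in D^E_{ef}$ there is a labeled directed edge from $e$ to $f$ whose associated similarity is $\phi_{e,f,d}(x) = \lambda^{-1}(x + d)$. Every similarity in the system has the same contraction ratio $r = \lambda^{-1}$, and the incidence matrix of the graph is precisely $M^E$. The attractor of this GIFS is $\KSinf$, whose components $\KSinf_e$ parametrize the edges of $\partial \Tinf$.

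Next I would verify the open set condition. For each $e \in E(G_p)$ I would choose a bounded open set $U_e$ containing the relative interior of $e$, contained in the interior of the patch $[e]^{\rrule^N(\pset)_p}$, and staying away from the vertices of $p$. Such a $U_e$ exists because condition \eqref{pc3} keeps the patch away from prototile vertices and condition \eqref{pc4} forces the patch to meet $\partial p$ only along the single boundary segment containing the endpoints of $e$. The claim is that for fixed $e$ the images $\phi_{e,f,d}(U_f)$ are pairwise disjoint and contained in $U_e$. Containment follows from $S \subset \rrule(G)$ together with an appropriately chosen $U_e$, and disjointness of interiors follows from \eqref{pc1} and \eqref{pc2}, which force the $\lambda^{-1}$-scaled patches corresponding to distinct subedges of $\widetilde\psi(e)$ to lie in subtiles with disjoint interiors. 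Strongness of the open set condition is immediate because each $U_e$ meets $\KSinf_e$ (the edge itself lies in $U_e$ and the attractor contains the edge's endpoints).

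With the open set condition in hand, the Mauldin-Williams theorem gives that $\dim_H \KSinf_e$ is the unique $s \ge 0$ satisfying $\rho(r^s M^E) = 1$, where $\rho$ is the spectral radius, for $e$ in the strongly connected component of $M^E$ realizing the Perron value, and no larger for other $e$. Because $r = \lambda^{-1}$ is a constant independent of the edge, this reduces to $\lambda^{-s} \lambda_E = 1$, giving
\[
s = \frac{\ln \lambda_E}{\ln \lambda}.
\]
Finally, by the proposition preceding the theorem, $\partial \Tinf = \partial T_\Sinf = \Gamma(T, \Sinf)$ is a countable union of translates of the sets $\KSinf_e$ for $e \in E(G)$, taken over the occurrences of the corresponding prototile edges in $T$. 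Hausdorff dimension is countably stable and translation invariant, so $\dim_H \partial \Tinf = \max_{e \in E(G)} \dim_H \KSinf_e = \ln \lambda_E / \ln \lambda$.

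The main obstacle is the open set condition; however, the four injectivity conditions of Definition \ref{injectivity_conditions} were designed precisely to enforce it, so the work is largely bookkeeping. A secondary subtlety is that $M^E$ need not be irreducible, so some care is needed to ensure that the Perron eigenvalue $\lambda_E$ really does govern the dimension of the full boundary rather than only a proper sub-attractor; this follows because every edge of $\partial \Tinf$ in the tiling $\Gamma(T,\Sinf)$ arises infinitely often under the substitution and hence projects onto some $\KSinf_e$ in the recurrent class of $M^E$.
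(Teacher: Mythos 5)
Your overall strategy is exactly the one the paper uses: realize $\rrule^E$ as a Mauldin--Williams graph-directed IFS whose maps all have contraction ratio $\lambda^{-1}$ and whose incidence matrix is $M^E$, verify the strong open set condition from the injectivity conditions, and solve $\lambda^{-s}\lambda_E=1$. Your closing remarks on the possible reducibility of $M^E$ and on passing from the attractor components to all of $\partial\Tinf$ by countable stability and translation invariance are welcome additions; the paper is terse on both points.

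The one step that would fail as written is your choice of open sets. You take $U_e$ to be a neighbourhood of the relative interior of the \emph{original} edge $e$, contained in $\int\supp[e]^{\rrule^N(\pset)_p}$. But the images $h_\epsilon(U_f)$ for the subedges of $\widetilde\psi(e)$ cluster around $S_e=\widetilde\psi(e)$, and after iteration around the fractal edge $\einf=\KSinf_e$; neither of these need lie anywhere near $e$, since a recurrent pair only requires $S_e$ to be combinatorially, not geometrically, close to $e$, and the injectivity conditions are stated for the patches $[\,\cdot\,]^{\rrule^N(\pset)_p}$ of edges of $S$, not of $G$. Consequently $h_\epsilon(U_f)\subset U_e$ can fail, and so can $U_e\cap\KSinf_e\neq\varnothing$: your justification that ``the attractor contains the edge's endpoints'' is not true in general, because $\widetilde\psi$ is only required to move each boundary vertex to some point of the same prototile edge, so $\KSinf_e$ need not contain, or even approach, the endpoints of $e$. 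The fix is to anchor the open sets to the attractor rather than to $G$: letting $a_e$ denote the interior endpoints of $\einf$, the paper takes
\[
U_e=\bigcup_{n\in\N}\int\bigl([\einf]^{\rrule^n(\pset)_p}\setminus[a_e]^{\rrule^n(\pset)_p}\bigr),
\]
for which the injectivity conditions \eqref{pc1}--\eqref{pc4} give the nesting $h_\epsilon(U_f)\subset U_e$, pairwise disjointness of the images, and $U_e\cap\KSinf_e\neq\varnothing$. With that substitution the remainder of your argument goes through as in the paper.
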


It is important to notice that while $M$ is always a primitive matrix, by assumption, it often happens that $M^E$ is not.  Theorem \ref{thm:dim} can be deduced almost directly from the definition of the Hausdorff dimension when $M^E$ is primitive.  In the interest of completeness we prove Theorem~\ref{thm:dim} by describing a fractal realization as the fixed point of a graph-directed iterated function system (GIFS), as defined in \cite{Edgar-Golds,MW}. In particular, \cite{MW} allows us to compute the Hausdorff dimension of the boundary of $\Tinf$ by computing the Mauldin-Williams dimension of the GIFS. 

We begin by defining a directed combinatorial graph $\MWedgegraph$ that has one vertex, denoted $v_e$, for each edge $e \in E(G)$.
The edge set $E(\MWedgegraph)$ contains an edge $\epsilon$ from $v_e$ to $v_f$ for each copy of $f$ that appears as a subedge in $\widetilde\psi(e)$.  That is, there are $M^E_{ef}$ edges pointing from $v_e$ to $v_f$.   To each vertex $v_e$ we associate a copy of $\R^2$. To each edge $\epsilon \in E(\MWedgegraph)$ from $e$ to $f$ we assign a digit $d \in D^E_{ef}$ and construct the map $x \mapsto\lambda^{-1}(x + d)$ and call this map $h_\epsilon$.  These are the components necessary to define a GIFS; since the maps $h_\epsilon$ are taken directly from the definition of $\rrule^E$ we refer to this as the GIFS given by $\rrule^E$.

Our fractal graph $\KSinf$ (as a vector of edges) is invariant for this GIFS in the sense that for each vertex $v_e \in V(\MWedgegraph)$, $\KSinf_e = \bigcup_{v_f \in V(\MWedgegraph)} \bigcup_{\epsilon \in E_{ef}} h_\epsilon(\KSinf_{e})$, where $E_{ef}$ denotes edges from $v_e$ to $v_f$. 

The Hausdorff dimension of the invariant set of a GIFS has bounds that depend on the contraction factors of the maps $h_\epsilon$.  In our case, each contraction factor is exactly $\lambda^{-1}$ and this leads to the lower and upper bounds being equal, and in fact $\ln \lambda_E / \ln \lambda$ as soon as we can establish that the GIFS satisfies the strong open set condition given here (and adapted to our notation.)

\begin{definition}[\cite{Edgar-Golds}, Definition 3.11]\label{def:sosc}
A GIFS with attractor set $\KSinf$ satisfies the \emph{strong open set condition} if, for each $v_e \in V(\MWedgegraph)$, there exists an open set $U_e \subset \R^2$ satisfying:
\begin{enumerate}
\item\label{sosc1} For all vertices $v_e, v_f$ and $\epsilon \in E_{ef}$, $h_\epsilon(U_f) \subset U_e$;
\item\label{sosc2} For all vertices $v_e, v_f$ and $v_{f'}$, $\epsilon \in E_{ef}$ and $\epsilon' \in E_{ef'}$ with $\epsilon \neq \epsilon'$, we have $h_\epsilon(U_f) \cap h_{\epsilon'}(U_{f'}) = \varnothing$; and
\item\label{sosc3} For each vertex $v_e$, $U_e \cap \KSinf_e \neq \varnothing$.
\end{enumerate}
\end{definition}

\begin{prop}\label{prop:sosc}
Suppose $(G,S)$ is a recurrent pair satisfying the injectivity conditions (\ref{pc1})-(\ref{pc4}) of Definition \ref{injectivity_conditions}.  Then the GIFS  given by $\rrule^E$ satisfies the strong open set condition.
\end{prop}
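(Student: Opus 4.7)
My plan is to verify the three conditions of Definition \ref{def:sosc} using the natural choice of open sets
\[
U_e := \int\bigl(\supp[\widetilde\psi(e)]^{\rrule(\pset)_p}\bigr), \qquad e \in E(G_p),
\]
i.e., the interior of the support of the $\rrule(\pset)$-subtile patch through which $\widetilde\psi(e)\subset S$ passes inside $p$. (I will take $S\subset \rrule(G)$ throughout; the general case follows by replacing $\rrule$ by $\rrule^N$.)

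The inclusion condition (\ref{sosc1}) should follow from a direct subtile-level computation: for $\epsilon\in E_{ef}$ carrying digit $d\in D^E_{ef}$ with $f\in E(G_q)$, the set $h_\epsilon(U_f)=\lambda^{-1}(U_f+d)$ is open, and since $U_f\subset \supp(q)$ it is contained in $\supp\bigl(\lambda^{-1}(q+d)\bigr)$, a single subtile of $\rrule(\pset)_p$ belonging to $[\widetilde\psi(e)]^{\rrule(\pset)_p}$ (because $\lambda^{-1}(f+d)$ is a subedge of $\widetilde\psi(e)$). Being an open subset of the support of the patch, $h_\epsilon(U_f)\subset U_e$. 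For the disjointness condition (\ref{sosc2}), distinct GIFS edges $\epsilon\neq \epsilon'$ correspond by construction to distinct pairs $(q,d)\neq(q',d')$ and therefore to distinct subtiles of $\rrule(\pset)_p$, whose interiors are pairwise disjoint; the argument just given then places $h_\epsilon(U_f)$ and $h_{\epsilon'}(U_{f'})$ in these disjoint interiors.

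I expect the main obstacle to be the attractor-meets-open-set condition (\ref{sosc3}), which requires an explicit point of $\KSinf_e$ in $U_e$. My approach is to exhibit such a point as the $\psiinf$-image of an interior vertex of $G$. Because $G_p$ is quasi-dual, every interior vertex has degree $\geq 3$, so for each edge $e$ the tree $G_p$ contains a path from $e$ to some edge $\tilde e$ incident to an interior vertex $v$. Condition (\ref{pc2}) of Definition \ref{injectivity_conditions} places the corresponding $S$-vertex $\widetilde\psi(v)$ strictly in the interior of a single subtile that itself lies in $\int(\supp p)$. I would then argue inductively that the iterates $\psin(v)$ remain trapped in a nested sequence of ``interior'' subtiles whose common limit $\psiinf(v)$ is an endpoint of $\KSinf_{\tilde e}$ lying inside $U_{\tilde e}$. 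The already-verified inclusion (\ref{sosc1}) then transports this interior witness along the tree-path via the maps $h_\epsilon$, yielding a point of $\KSinf_e$ in $U_e$. The delicate bookkeeping, and the part I expect to require most care, is verifying that this iterated refinement truly avoids the topological boundary $\partial[\widetilde\psi(\tilde e)]^{\rrule(\pset)_p}$; conditions (\ref{pc2}) and (\ref{pc3}) of Definition \ref{injectivity_conditions}, which isolate ``vertex'' subtiles to the interior of $p$ and keep the patch away from corners of $p$, are engineered precisely to rule out such a boundary collapse.
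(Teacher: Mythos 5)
There is a genuine gap in your verification of condition (\ref{sosc2}), and it stems from your choice of open sets. You claim that distinct GIFS edges $\epsilon\neq\epsilon'$ emanating from $v_e$ ``correspond by construction to distinct pairs $(q,d)\neq(q',d')$ and therefore to distinct subtiles.'' This is false: a GIFS edge out of $v_e$ is indexed by a subedge $\lambda^{-1}(f+d)$ of $\widetilde\psi(e)$, i.e.\ by a pair (edge $f$ of $G_q$, digit $d$), and the path $\widetilde\psi(e)$ typically enters a single subtile $\lambda^{-1}(q+d)$ through one boundary vertex of $G_q+d$, passes through an interior vertex $v$ of $G_q+d$, and exits through another --- thereby using \emph{two distinct} edges $f\neq f'$ of $G_q$ with the \emph{same} digit $d$. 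For such a pair, $\widetilde\psi(f)$ and $\widetilde\psi(f')$ share the vertex $\widetilde\psi(v)$, so by (\ref{pc2}) the patches $[\widetilde\psi(f)]^{\rrule(\pset)_q}$ and $[\widetilde\psi(f')]^{\rrule(\pset)_q}$ both contain the subtile $[\widetilde\psi(v)]^{\rrule(\pset)_q}$; hence your sets $U_f$ and $U_{f'}$ both contain the interior of that subtile, and $h_\epsilon(U_f)\cap h_{\epsilon'}(U_{f'})=\lambda^{-1}\bigl((U_f\cap U_{f'})+d\bigr)\neq\varnothing$. So condition (\ref{sosc2}) fails for $U_e=\int\bigl(\supp[\widetilde\psi(e)]^{\rrule(\pset)_p}\bigr)$; this is not a bookkeeping issue but a defect of the open sets themselves. (Your (\ref{sosc1}) argument is fine, and the elaborate construction you sketch for (\ref{sosc3}) would be unnecessary with a better choice of $U_e$, since one only needs \emph{some} point of $\KSinf_e$ in $U_e$.)

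The paper's proof avoids exactly this problem by excising the offending vertex subtiles: it sets $a_e$ to be the interior endpoints of the attractor edge $\einf=\KSinf_e$, puts $U_{e,n} = \int\bigl([\einf]^{\rrule^n(\pset)_p}\setminus[a_e]^{\rrule^n(\pset)_p}\bigr)$, and takes $U_e=\bigcup_{n\in\N}U_{e,n}$. Working with the attractor $\einf$ rather than the level-one graph $S$, removing the subtiles around the shared endpoints (which by (\ref{pc1})--(\ref{pc2}) are the \emph{only} places where the patches of adjacent edges overlap), and unioning over all refinement levels $n$ yields an open neighbourhood of $\einf\setminus a_e$, which makes (\ref{sosc2}) and (\ref{sosc3}) immediate while preserving (\ref{sosc1}). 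To repair your argument you would need to modify your $U_e$ in a similar way --- at minimum, delete from your patch the subtiles $[\widetilde\psi(v)]^{\rrule(\pset)_q}$ at every interior vertex where two subedges of some $\widetilde\psi(e)$ meet --- and then re-verify (\ref{sosc3}), since after the excision your witness point must be shown to avoid the deleted sets.
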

\begin{proof}
Fix an edge $e \in E(G)$ in the prototile $p$; $e$ is associated to a unique vertex of our GIFS. Denote by $\einf=\KSinf_e$ its counterpart in the attractor and define $U_e$ as follows. Let $a_e$ be the set of interior vertices of $\einf$ that are endpoints of $\einf$ (i.e., endpoints of $\einf$ interior to the support of $p$; by the injectivity conditions this has either one or two elements). Let $U_{e,n} = \int([\einf]^{\rrule^n(\pset)_p}\setminus[a_e]^{\rrule^n(\pset)_p} )$. Then set $U_e = \bigcup_{n \in \N} U_{e,n}$. Then the injectivity conditions \eqref{pc1}--\eqref{pc4} then guarantee that these $U_e$ satisfy conditions \eqref{sosc1}--\eqref{sosc3} of Definition~\ref{def:sosc}.
\end{proof}

The \emph{Mauldin-Williams dimension} \cite{MW} of the invariant set $F$ of a GIFS is defined as follows. Suppose each $h_\epsilon$ has similiarity ratio $r_\epsilon<1$. Define a matrix $M^E(s)$ by $M^E_{ef}(s) = \sum_{\epsilon \in E_{ef}} r_\epsilon^s$, where $e$ and $f$ represent edges in $G$ and therefore vertices in $\MWedgegraph$. Let $\Phi(s)$ denote the spectral radius of $M^E(s)$.  
The Mauldin-Williams dimension $\dim_{MW}(F)$ of $F$ is the unique $s_1$ for which $\Phi(s_1) = 1$.

\begin{proof}[Proof of Theorem~\ref{thm:dim}]
In \cite[Section 3]{Edgar-Golds} it is shown that if $\|h_\epsilon(x) - h_\epsilon(y)\| = r_\epsilon \|x - y \|$ for each $\epsilon \in E_{ef}$ and $x,y$ in the copy of $\R^2$ associated with $v_f$, and if the GIFS satisfies the strong open set condition, then the Hausdorff dimension $\dim_H\KSinf_e = \dim_{MW}(\KSinf_e)$ for each vertex $v_e$. Our GIFS satisfies the strong open set condition by Proposition~\ref{prop:sosc} and has $r_\epsilon = \lambda^{-1}$ for all $\epsilon$. This means that $M^E(s) = \lambda^{-s} M^E$, where $M^E$ is the edge substitution matrix given by $(G,S)$.  This implies that $\Phi(s) = \lambda^{-s}\lambda_E$, where $\lambda_E$ is the spectral radius of $M^E$.
Solving $\lambda^{-s}\lambda_E = 1$ gives $s = \ln \lambda_E / \ln \lambda$.
\end{proof}

\section{\v{C}ech Cohomology of a tiling space}\label{cohom_zeta_sec}

In this section we relate our fractal tiling construction to the Anderson-Putnam complex and \v{C}ech cohomology. Given an nonperiodic and primitive substitution tiling along with a recurrent pair satisfying the injectivity conditions of Definition \ref{injectivity_conditions}, we show that defining an orientation on the recurrent pair $(G,S)$ gives rise to the substitution maps and boundary maps for the associated fractal realization. If the recurrent pair is quasi-dual then the fractal realization forces its border and is mutually locally derivable to the original substitution tiling. Putting this together, a recurrent pair $(G,S)$ can be used to compute the cohomology of the original tiling. The upshot of our construction is that \v{C}ech cohomology is readily computable for tilings that do not force the border. We illustrate our method by computing the cohomology for the 2-dimensional Thue-Morse (2DTM) tiling. We begin with some background on substitution tiling spaces and their cohomology.

\subsection{A brief description of the tiling space of a substitution}

A substitution $\sub$ with expansion $\lambda$ generates a topological space $\tilingspace$ of tilings often called the \emph{hull}.  This consists of all tilings $T$ such that for every finite patch of tiles in $T$ there exists a prototile $\ptile \in \pset$ and an $N$ for which a copy of the patch appears inside $\sub^N(\ptile)$.  That is, every patch in $T$ is admissible by the substitution.

The topology on $\tilingspace$ is generated by the `big ball' metric, which measures how close two tilings are by how little they differ on big balls around the origin.\footnote{This is a continuous analogue to the standard metric used on shift spaces in symbolic dynamics.} We will not define it precisely here but refer readers to \cite[p.5]{Sol} for details.  It is possible to define $\tilingspace$ as the orbit closure of a fixed self-similar tiling $T$ under the action of translation, and in this view $\tilingspace$ is often called the hull of $T$.

The substitution $\sub$ extends to all tilings in $\tilingspace$ in a natural way.  Given a tiling $T \in \tilingspace$ and a tile $t \in T$, we place the patch $\sub(t)$ atop $\lambda \supp(t)$.  Doing this for all the tiles of $T$ yields a tiling we call $\sub(T)$ which also lives in $\tilingspace$.  When the substitution on $\tilingspace$ is invertible we call the substitution {\em recognizable}.  This means that in any location of any tiling $T$ it is possible to determine the exact type and location of the supertile $\sub(t)$ at that spot using only local information.  Since all the tilings considered in this paper are nonperiodic, \cite{Sol2} implies that they are recognizable. 

\subsection{Border-forcing}

For topological analysis of tiling spaces, it is crucial that the substitution have the property of forcing its border.  This means that whenever a supertile $\sub^N(t)$ appears in $T\in \tilingspace$, the patch $[\sub^N(t)]^T$ of tiles (which includes those immediately adjacent to  $\sub^N(t)$ in $T$) are independent of $T$ and appear in identical fashion anywhere else $\sub^N(t)$ appears in any tiling in $\tilingspace$.  Formally, the substitution forces the border if whenever $t$ and $t'$ are equivalent tiles in $T$, then $[\sub^N(t)]^T$ and $[\sub^N(t')]^T$ are equivalent  patches.

When $\sub$ is border-forcing its tiling space is homeomorphic to the inverse limit of its approximants as discussed below, which is key to cohomology computations on $\tilingspace$.
The following theorem gives conditions for border-forcing that are sufficient but not necessary; it is often possible to tell by inspection whether a given recurrent pair will produce a substitution with the desired result.

\begin{prop}\label{Prop:border-forcing}
Let $(G,S)$ be a recurrent pair for a self-similar tiling $T$ satisfying the conditions of Proposition \ref{thm: psi infinity is injective}. Then the substitution map $\subinf$ for the tiling $\Tinf$ is border-forcing.\end{prop}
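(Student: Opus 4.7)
My plan rests on three ingredients provided by the excerpt: the MLD correspondence of Lemma~\ref{MLD_lemma}\eqref{MLD_part}, which supplies a radius $R>0$ of local determination between $T$ and $\Tinf$ in both directions; the labelling convention of Lemma~\ref{tiling_from_graph}, by which the label of a tile $t^{(\infty)} \in \Tinf$ records the translational type of the whole $T$-patch $[t]^T$ sitting inside its support; and the injectivity conditions of Definition~\ref{injectivity_conditions}, particularly (I3) and (I4), which force $\psiinf(\partial \tzero)$ to miss every vertex of $T$ and confine each fractal edge of $\Tinf$ to a patch consisting of exactly the two $T$-tiles along a single $T$-edge. The immediate consequence I would record is that each fractal edge sits strictly between two $T$-tiles whose types are determined by the label of that fractal edge alone, so tracing the fractal boundary of any fractal tile or supertile yields an unambiguous cyclic list of the $T$-tiles touching that boundary from both inside and outside.

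Given this setup, I would then pick $N$ large enough that the fractal supertile $\subinf^N(t^{(\infty)})$, augmented by the first layer of $T$-tiles sitting outside its fractal boundary and read off as above, contains the entire $T$-patch $[s^{(\infty)}]^T$ for every $\Tinf$-tile $s^{(\infty)}$ touching $\subinf^N(t^{(\infty)})$. Such an $N$ exists because fractal prototile diameters are uniformly bounded by FLC of $\Tinf$ (Theorem~\ref{prop:T infty is a tiling}). Once this is in place, for any ambient tiling $T_\infty \in \Omega_\infty$ containing a copy of the supertile, every neighbouring $\Tinf$-tile has its support inside the canonical region just described, so by MLD its label, and hence its identity, is forced to equal the canonical one read off from $\subinf^N(t^{(\infty)})$ alone. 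This gives $[\subinf^N(t^{(\infty)})]^{T_\infty}$ independent of the ambient tiling, which is precisely the border-forcing property of $\subinf$.

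The main obstacle I anticipate is propagation outside the first layer. The fractal boundary directly determines only the $T$-tiles sitting immediately across it; if some adjacent fractal tile straddles that first outside layer into a second layer of $T$-tiles, one must iterate the boundary-tracing argument, using the recursive structure of $\wpsin$ (and, if necessary, recognisability of $\sub$ on the nonperiodic hull, cf.~\cite{Sol2}) to pin down deeper $T$-tiles. Controlling this iteration uniformly in the fractal supertile type so that a single $N$ suffices is, I expect, where the technical work lies.
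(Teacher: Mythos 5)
Your first two ingredients (the labelling of fractal tiles by their $T$-vertex-patches, and the control of the fractal boundary via the injectivity conditions) are also the starting point of the paper's proof, but the step where you choose $N$ contains a genuine gap. The region you propose to determine --- the supertile $\subinf^N(\tinf)$ together with the single layer of $T$-tiles read off across its fractal boundary --- has an \emph{outside} thickness that is bounded independently of $N$ (one $T$-tile deep), whereas the patch $[s^{(\infty)}]^T$ of an adjacent fractal tile can extend a distance up to $k_1=\max_{t\in\tzero}\diam[t]^T$ beyond the supertile, which in general is several $T$-tiles deep. Increasing $N$ only enlarges the supertile, not the determined collar outside it, so no $N$ of the kind you want exists; the appeal to bounded fractal prototile diameters does not address this. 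You do flag the problem, but the proposed repair is circular: the second layer of outside $T$-tiles would have to be read off from the fractal edges of the \emph{adjacent} fractal tiles, and those are precisely the tiles that border-forcing is supposed to pin down; recognizability of $\sub$ recovers supertiles from a given tiling and does not by itself make the neighbourhood of a supertile independent of the ambient tiling.

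The missing idea, which is how the paper proceeds, is to grow the determined collar from the inside rather than the outside. The label of $\tinf$ determines the $T$-patch $[\tinf]^T$, hence the supertile $\subinf^N(\tinf)$ determines the $T$-patch $\sub^N([\tinf]^T)$, whose support contains $\lambda^N\supp(\tinf)$ together with a collar of width at least $\lambda^N k_2$, where $k_2>0$ is the minimum distance from $\partial\tinf$ to $\partial\supp[\tinf]^T$. This collar grows with $N$, and choosing $N$ so that $\lambda^N k_2>2k_1$ makes it contain $[s^{(\infty)}]^T$ for every fractal tile $s^{(\infty)}$ meeting the supertile; those tiles are then determined by their labels, which is exactly border-forcing. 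No tracing of the fractal boundary, no iteration over layers, and no recognizability are needed.
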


\begin{proof}
The fact that $G$ and $S$ are quasi-dual and form a recurrent pair means that for any $t \in \tzero$ and its counterpart $\tinf \in \Tinf$ we have $[t]^T = [\tinf]^T$, and that both $t$ and $\tinf$ are on the interior of the same patch of $T$-tiles.  By definition, the equivalence class of this patch forms the label for both $t$ and $\tinf$.
Let $k_1 = \max_{t \in \tzero} \diam [t]^{T}$. Then every ball of radius $2k_1$ contains a patch of tiles in $T$ which determine at least one tile in $\Tinf$.

There is a strictly positive minimum distance $k_2$ between $\partial \tinf$ and $\partial\supp[\tinf]^{T}$, where the minimum is taken over all $\tinf \in \Tinf$. Choose $N$ such that $\lambda^N k_2 > 2k_1$. For this $N$, we know that $\sub^N([\tinf]^T)$ determines all of the $T$-tiles on its interior, and this includes all of the $T$-tiles a distance of $2k_1$ or less from
$\lambda^N(\supp\tinf)$.  By choice of $k_1$ this patch determines $[\subinf^N(\tinf)]^{\Tinf}$, as desired.
\end{proof}

\begin{cor}
Suppose $T$ is a self-similar tiling satisfying the conditions of Theorem \ref{thm:existence} and let $\tilingspace$ denote its hull.  Then there are infinitely many border-forcing substitutions $\subinf$ such that $(\oinf,\subinf)$ is topologically conjugate to $(\Omega,\sub)$.
\end{cor}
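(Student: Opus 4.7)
The plan is to combine Theorem~\ref{thm:existence} with the just-established Proposition~\ref{Prop:border-forcing} to obtain a border-forcing fractal realization whose hull is conjugate to $(\Omega,\sub)$, and then to vary the construction to produce infinitely many such realizations. First, I would invoke Theorem~\ref{thm:existence} to obtain a recurrent pair $(G,S)$ for $T$ satisfying the injectivity conditions of Definition~\ref{injectivity_conditions}. Theorem~\ref{prop:T infty is a tiling} then supplies a self-similar tiling $\Tinf$ with substitution $\subinf$, and Proposition~\ref{Prop:border-forcing} guarantees that $\subinf$ forces its border; let $\oinf$ be its hull. Next I would upgrade the MLD relation from Lemma~\ref{MLD_lemma}(\ref{MLD_part}) to a homeomorphism $\Phi\colon\Omega\to\oinf$: because every patch appearing in any $T'\in\Omega$ already appears in $T$, the local derivation rules and their inverses extend unchanged pointwise over the hull, and a standard argument (cf.\ \cite{Sol}) produces a translation-equivariant homeomorphism $\Phi$.

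To verify that $\Phi$ intertwines the substitutions I would start on the base tiling: self-similarity gives $\sub(T)=T$ and $\subinf(\Tinf)=\Tinf$, and by construction $\Phi(T)=\Tinf$, so $\Phi(\sub(T))=\subinf(\Phi(T))$ trivially. Translation-equivariance of $\Phi$ and of both substitution maps, together with the scaling identity~\eqref{tinf_invariant_lambda}, propagate this identity across the translation orbit of $T$; continuity of all the maps involved extends it to all of $\Omega$, and recognizability of both $\sub$ and $\subinf$ (automatic from nonperiodicity, by \cite{Sol2}) ensures the extension is unambiguous. Combined with $\Phi$ being a translation-equivariant homeomorphism, this shows $(\Omega,\sub)$ and $(\oinf,\subinf)$ are topologically conjugate.

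To exhibit infinitely many distinct $\subinf$, I would apply the construction to each iterate $\sub^k$, $k\in\N$. Each $\sub^k$ inherits primitivity, the singly edge-to-edge property, and FLC, and has the same hull $\Omega$, so Theorem~\ref{thm:existence} yields a border-forcing fractal realization $\subinf_k$ whose expansion factor is at least $\lambda^k$. Since the expansion factor is a substitution invariant that grows without bound in $k$, the substitutions $\{\subinf_k\}$ are pairwise distinct, giving infinitely many. The main obstacle I expect is the intertwining check in the middle step: translational conjugacy follows cleanly from MLD, but upgrading it to a conjugacy of the substitution maps requires combining the self-similarity identity~\eqref{tinf_invariant_lambda} with recognizability and verifying that the local MLD rule defining $\Phi$ behaves naturally with respect to the hierarchical structure of the substitution, which is where the technical content lives.
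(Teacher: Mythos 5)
Your overall architecture (Theorem \ref{thm:existence} $\to$ Proposition \ref{thm: psi infinity is injective} $\to$ Theorem \ref{prop:T infty is a tiling} $\to$ Proposition \ref{Prop:border-forcing}, then MLD upgraded to conjugacy) matches the paper's intent, but your mechanism for producing \emph{infinitely many} examples is different from the paper's and is where the argument breaks. You propose running the construction on the iterates $\sub^k$ and distinguishing the resulting substitutions $\subinf_k$ by their growing expansion factors. But the expansion factor is (essentially) a conjugacy invariant: the induced map of $\subinf_k$ on $\check{H}^2(\oinf_k)$ has the square of the expansion factor as its Perron eigenvalue, and a topological conjugacy $(\tilingspace,\sub)\cong(\oinf_k,\subinf_k)$ would force these eigenvalues to agree with those of the map induced by $\sub$. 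So the very invariant you invoke to show the $\subinf_k$ are pairwise distinct also shows that at most one of them can be conjugate to $(\tilingspace,\sub)$: a realization built from $\sub^k$ is conjugate to (a power of) $\sub^k$, not to $\sub$. The paper instead obtains infinitely many examples by varying the recurrent pair itself: in Lemma \ref{lem:existdualish} one may start from a quasi-dual rather than a dual graph $G_0$, and for each admissible $N$ there are several distinct choices of the selected subgraph $G_1$; different choices yield genuinely different prototile sets and substitutions without running into the expansion-factor obstruction.

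On the conjugacy step itself, the paper's proof is silent, so your attempt to upgrade the MLD of Lemma \ref{MLD_lemma}\eqref{MLD_part} to an intertwining homeomorphism is added content rather than a deviation. Your verification, however, only checks $\Phi(\sub(T))=\subinf(\Phi(T))$ at the fixed point $T$, which is vacuous since both sides are fixed; what is actually needed is that the local derivation rule carries the $\sub$-supertile decomposition of an arbitrary $T'\in\tilingspace$ to the $\subinf$-supertile decomposition of $\Phi(T')$, which follows from the construction of $\Tinf$ (each fractal tile $t^{(\infty)}$ sits inside the patch $[t]^T$, and equation \eqref{tinf_invariant_lambda} matches the two hierarchies) together with recognizability. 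That part of your outline is repairable; the ``infinitely many'' step is the genuine gap.
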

\begin{proof}
The proof of Theorem \ref{thm:existence} can be adapted to produce an infinite number of distinct recurrent pairs. In particular, Lemma \ref{lem:existdualish} can be adapted by
\begin{enumerate}
\item using a quasi-dual graph in place of $G_0$ or
\item allowing $N$ to increase, giving several distinct $G_1$ graphs for each $N$.
 \qedhere
\end{enumerate}
\end{proof}

\subsection{\v{C}ech cohomology of a substitution tiling}\label{AP_background}
We begin with a very brief description of the Anderson-Putnam complex and \v{C}ech cohomology of a border forcing tiling space in two dimensions. We refer the reader to \cite{AP} or \cite{Lorenzo.book} for further details. Suppose $\Omega$ is the tiling space of a border forcing nonperiodic substitution tiling with finite local complexity and prototile set $\pset$. A finite CW complex $\Gamma$, called the approximant, is defined by identifying edges and vertices in two prototiles whenever those edges or vertices are common in any two translates that occur in a patch in the tiling space. Anderson and Putnam \cite{AP} have shown that the tiling space $\Omega$ is the inverse limit of the approximant for tilings that force the border
\begin{equation}\label{inverse limit}
\Omega = \underset{\longleftarrow}{\lim} (\Gamma,f),
\end{equation}
where $f$ is the forgetful map described in \cite[Section 2.5]{Lorenzo.book}.

Since the approximant is a finite CW complex, we obtain a chain complex in simplicial homology. However, homology is not well-behaved with respect to inverse limits so we use cohomology instead, and consider the dual cochain complex. Let $\Gamma^0$, $\Gamma^1$ and $\Gamma^2$ denote the functions from the vertices, edges, and tiles into the group of integers, respectively. There are coboundary maps $\delta_0: \Gamma^0 \rightarrow \Gamma^1$ and $\delta_1: \Gamma^1 \rightarrow \Gamma^2$ defined by taking the transpose of the matrix defining the homology boundary maps. We then have the following cochain complex:
\begin{align*}
0 \overset{}{\longrightarrow} \Gamma^0 & \overset{\delta_0}{\longrightarrow} \Gamma^1 \overset{\delta_1}{\longrightarrow} \Gamma^2 \overset{}{\longrightarrow} 0.
\end{align*}
From \cite[Theorem 3.4]{Lorenzo.book}, \v{C}ech cohomology and singular cohomology are equal on finite CW complexes. Thus we obtain the cohomology groups of the approximant $\Gamma$ by
\begin{equation*}
\check{H}^0(\Gamma)= \ker(\delta_0), \quad \check{H}^1(\Gamma)= \ker(\delta_1)/\Im(\delta_0), \quad \text{and} \quad \check{H}^2(\Gamma)= \Gamma^2/\Im(\delta_1).
\end{equation*}

Since we have a substitution tiling there are substitution maps on vertices, edges, and tiles, which we denote by $A_0$, $A_1$, and $A_2$ respectively. Since cohomology is contravariant, the inverse limit appearing in \eqref{inverse limit} turns into direct limits on the cohomology groups of the approximant. Thus, the \v{C}ech cohomology of $\Omega$ is given by
\begin{equation}\label{inv_limit}
\check{H}^i(\Omega)=\underset{\longrightarrow}{\lim}(H^i(\Gamma),A_i^*)
\end{equation}
where $A_i^*$ denotes the map induced by the substitution on the singular cohomology group $H^i(\Gamma)$.

\subsection{Cohomology from a recurrent pair $(G,S)$}\label{Cohomology_G-S}

We now relate the Anderson-Putnam complex to our situation in order to compute the cohomology  of a tiling $T$ using a fractal realization of $T$. In fact, it is not necessary to see the fractal at all: once a recurrent pair $(G,S)$ satisfying the injectivity conditions is identified, it is possible to do all computations using the graphs $G$ and $S$. We illustrate the construction in Section \ref{2DTM_cohomology} by computing the \v{C}ech cohomology of the two dimensional Thue-Morse tiling using a recurrent pair $(G,S)$. We note that it may be useful to have the example in Section \ref{2DTM_cohomology} at hand while reading this section.

Suppose $T$ is a strongly nonperiodic tiling with finite local complexity and prototile set $\pset$ admitting a  recurrent pair $(G,S)$ satisfying the injectivity conditions of Definition \ref{injectivity_conditions}. Let $\Omega$ be the tiling space associated with $T$. 
Using the construction in Section \ref{sec:psi} we obtain a fractal tiling $\Tinf$ with tiling space $\oinf$ and prototile set $\pinf$. Proposition \ref{prop:T infty is a tiling} implies that $\Tinf$ is a self-similar nonperiodic tiling and Proposition \ref{Prop:border-forcing} implies that $\Tinf$ forces the border.

We will show that the Anderson-Putnam complex is completely determined by the recurrent pair $(G,S)$. First the approximant $\Gamma$ of $\oinf$. 

Let $V_I$ denote the set of interior vertices of $G$ and for each $v \in V_I$ let $f_v$ denote the function that takes the value one on $v$ and zero on $V_I \setminus v$. Then $\Gamma^0$ is generated by $\{f_v \mid v \in V_I\}$ and $\Gamma^0 \cong \Z^{|v_I|}$.

We now look at $\Gamma^1$. Since we have assumed that $(G,S)$ satisfies the conditions of Proposition \ref{thm: psi infinity is injective}, $\psiinf$ is a bijective map from $\tzero$ to $\Tinf$, which induces a bijection taking edges in $G$ to fractal edges whose translations make up $\Tinf$. More specifically, there are two types of edges involved in computing the cohomology of the approximant. The first are pairs of edges in $G$ that form a single edge when we take the quotient by the equivalence relation $\sim_T$ described in section \ref{subsec:tilinggraphs} (two boundary vertices are identified if they ever meet in any patch in $T$). We call pairs of these edges a (single) boundary edge. The second type are edges that connect two interior vertices of $G$ within a prototile, and we call these edges interior edges. 
Let $E$ be the union of the boundary edges and the interior edges. For $e \in E$ let $f_e$ denote the function that takes the value one on $e$ and zero on $E \setminus e$. Then $\Gamma^1$ is generated by $\{f_e \mid e \in E\}$ and $\Gamma^1 \cong \Z^{|E|}$.

For $\Gamma^2$, recall that the prototile set $\pinf$ is in bijective correspondence with distinct vertex patterns in $T$. For $\ptile \in \pinf$, let $f_{\ptile}$ denote the function that takes the value one on $\ptile$ and zero on $\pinf \setminus \ptile$. Then $\Gamma^2$ is generated by $\{f_{\ptile} \mid \ptile \in \pinf\}$ and $\Gamma^2 \cong \Z^{|\pinf|}$.

To compute the cohomology of the approximant it remains to determine the coboundary maps. We accomplish this by finding the boundary maps in homology and then taking adjoints. The first step is to assign an orientation to each edge in $E$ (forming the dual space of $\Gamma^1$). As in \cite[Section 3.2]{Lorenzo.book}, let $\partial_2$ be the matrix with a row for each oriented edge in $E$ and a column for each prototile in $\pinf$. For $e\in E$ and $p \in \pinf$, a $+1$ is added to the $(e,p)$ entry if edge $e$ appears as an edge in $p$ in the same orientation and a $-1$ is added if it appears in the opposite orientation. The coboundary map $\delta_1:\Gamma^1 \to \Gamma^2$ is the transpose matrix $\delta_1=\partial_2^t$. Let $\partial_1$ be the matrix with a row for each interior vertex $V_I$, and a column for each oriented edge in $E$. For each $e \in E$, place a $+1$ in the $(v,e)$ entry if $v$ is the range vertex of $e$ and place a $-1$ in the $(w,e)$ entry if $w$ is the source vertex of $e$. The coboundary map $\delta_0:\Gamma^0 \to \Gamma^1$ is the transpose matrix $\delta_0=\partial_1^t$. Putting all of this together leads to the cohomology groups of the approximants $\check{H}(\Gamma^i)$ for $i=0,1,2$.

In order to compute the \v{C}ech cohomology of the tiling space $\oinf$ the final ingredients are the substitution maps. The matrix $A_0$ is the $|V_I| \times |V_I|$ substitution matrix on interior vertices, described entirely by the substitution of interior vertices from $G$ to $S$. The matrix $A_1$ is an $|E| \times |E|$ matrix with a $+1$ added to the $(e,f)$ entry if $f$ appears in the substitution of $e$ in the same orientation as $e$ and a $-1$ if $f$ appears in the opposite orientation, where the substitution is determined by the graph $S$. The matrix $A_2$ is a $|\pinf| \times |\pinf|$ matrix where entry $(p,q)$ is the number of translations of prototile $q$ appearing in the substitution of prototile $p$.

Computing the \v{C}ech cohomology of $\oinf$ is now an exercise in linear algebra. Lemma \ref{MLD_lemma}\eqref{MLD_part} implies that $\oinf$ is mutually locally derivable to $\Omega$, and hence the \v{C}ech cohomology groups of the two tiling spaces are equal.

%
%
%

\subsection{Cohomology of the two dimensional Thue-Morse tiling}\label{2DTM_cohomology}

We compute the cohomology of the 2-dimensional Thue-Morse (2DTM) tiling using the recurrent pair shown in Figure \ref{2DTM_graph_sub}  of the introduction. 

We label the edges of the graph $G$ and define an orientation on these edges as in Figure \ref{fig:2DTM_tiles} and extend the orientation to $S$. It is essential that the orientation be consistent across pairs of edges connected by boundary vertices, since they become edges in the AP complex of $\Tinf$.

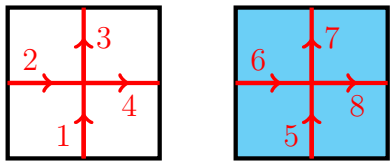
\begin{figure}[ht]\label{fig:edges_tiles}
\[
\begin{tikzpicture}
\draw[ultra thick] (0,0) rectangle (2,2);

\draw[ultra thick,red,->] (1,0) -> node[auto] {1} (1,0.6);
\draw[ultra thick,red,->] (1,1) ->  node[auto,pos=1,swap] {3} (1,1.6);
\draw[ultra thick,red] (1,0) -- (1,2);

\draw[ultra thick,red,->] (0,1) -> node[auto] {2} (0.6,1);
\draw[ultra thick,red,->] (1,1) ->  node[auto,pos=1,swap] {4} (1.6,1);
\draw[ultra thick,red] (0,1) -- (2,1);

\begin{scope}[xshift=3cm]

\draw[ultra thick,fill=cyan!50] (0,0) rectangle (2,2);

\draw[ultra thick,red,->] (1,0) -> node[auto] {5} (1,0.6);
\draw[ultra thick,red,->] (1,1) ->  node[auto,pos=1,swap] {7} (1,1.6);
\draw[ultra thick,red] (1,0) -- (1,2);

\draw[ultra thick,red,->] (0,1) -> node[auto] {6} (0.6,1);
\draw[ultra thick,red,->] (1,1) ->  node[auto,pos=1,swap] {8} (1.6,1);
\draw[ultra thick,red] (0,1) -- (2,1);
\end{scope}
\end{tikzpicture}
\]
\caption{The prototiles of the 2DTM tiling with an orientated dual graph $G$ inscribed.} 
\end{figure}
\begin{figure}[ht]
\begin{tikzpicture}
\begin{scope}[xshift=0cm,yshift=0cm,scale=1.1]
      \node at (0,0) {\includegraphics[width=0.75in]{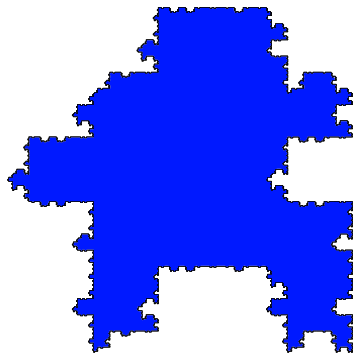}};
      \draw[black,->] (-1.0,-1.0) --  (-1.0,-0.5) node[left] {$\scriptstyle 7$};
      \draw[black,->] (-1,-0.5) -- (-1,0.5) node[left] {$\scriptstyle 5$};
      \draw[black] (-1,0.5) -- (-1,1);
      \draw[black,->] (-1,1) --  (-0.5,1) node[above] {$\scriptstyle 8$};
      \draw[black,->] (-0.5,1) -- (0.5,1) node[above] {$\scriptstyle 6$};
      \draw[black] (0.5,1) -- (1,1);
      \draw[black,->] (1.0,-1.0) --  (1.0,-0.5) node[right] {$\scriptstyle 7$};
      \draw[black,->] (1,-0.5) -- (1,0.5) node[right] {$\scriptstyle 5$};
      \draw[black] (1,0.5) -- (1,1);
      \draw[black,->] (-1,-1) --  (-0.5,-1) node[below] {$\scriptstyle 8$};
      \draw[black,->] (-0.5,-1) -- (0.5,-1) node[below] {$\scriptstyle 6$};
      \draw[black] (0.5,-1) -- (1,-1);
      \node at (-1,-1) {$\scriptstyle \bullet$};
      \node at (-1,1) {$\scriptstyle \bullet$};
      \node at (1,-1) {$\scriptstyle \bullet$};
      \node at (1,1) {$\scriptstyle \bullet$};
\end{scope}
\begin{scope}[xshift=3cm,yshift=0cm,scale=1.1]
      \node at (0,0) {\includegraphics[width=0.75in]{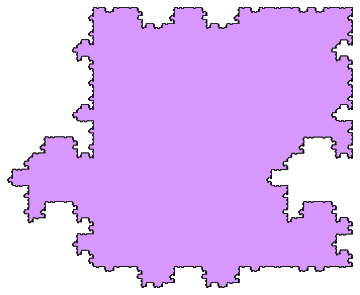}};
      \draw[black,->] (-1.0,-1.0) --  (-1.0,-0.5) node[left] {$\scriptstyle 3$};
      \draw[black,->] (-1,-0.5) -- (-1,0.5) node[left] {$\scriptstyle 1$};
      \draw[black] (-1,0.5) -- (-1,1);
      \draw[black,->] (-1,1) --  (-0.5,1) node[above] {$\scriptstyle 4$};
      \draw[black,->] (-0.5,1) -- (0.5,1) node[above] {$\scriptstyle 2$};
      \draw[black] (0.5,1) -- (1,1);
      \draw[black,->] (1.0,-1.0) --  (1.0,-0.5) node[right] {$\scriptstyle 3$};
      \draw[black,->] (1,-0.5) -- (1,0.5) node[right] {$\scriptstyle 1$};
      \draw[black] (1,0.5) -- (1,1);
      \draw[black,->] (-1,-1) --  (-0.5,-1) node[below] {$\scriptstyle 4$};
      \draw[black,->] (-0.5,-1) -- (0.5,-1) node[below] {$\scriptstyle 2$};
      \draw[black] (0.5,-1) -- (1,-1);
      \node at (-1,-1) {$\scriptstyle \bullet$};
      \node at (-1,1) {$\scriptstyle \bullet$};
      \node at (1,-1) {$\scriptstyle \bullet$};
      \node at (1,1) {$\scriptstyle \bullet$};
\end{scope}
\begin{scope}[xshift=6cm,yshift=0cm,scale=1.1]
      \node at (0,0) {\includegraphics[width=0.75in]{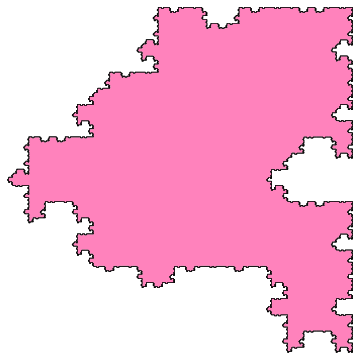}};
      \draw[black,->] (-1.0,-1.0) --  (-1.0,-0.5) node[left] {$\scriptstyle 3$};
      \draw[black,->] (-1,-0.5) -- (-1,0.5) node[left] {$\scriptstyle 5$};
      \draw[black] (-1,0.5) -- (-1,1);
      \draw[black,->] (-1,1) --  (-0.5,1) node[above] {$\scriptstyle 8$};
      \draw[black,->] (-0.5,1) -- (0.5,1) node[above] {$\scriptstyle 2$};
      \draw[black] (0.5,1) -- (1,1);
      \draw[black,->] (1.0,-1.0) --  (1.0,-0.5) node[right] {$\scriptstyle 7$};
      \draw[black,->] (1,-0.5) -- (1,0.5) node[right] {$\scriptstyle 1$};
      \draw[black] (1,0.5) -- (1,1);
      \draw[black,->] (-1,-1) --  (-0.5,-1) node[below] {$\scriptstyle 4$};
      \draw[black,->] (-0.5,-1) -- (0.5,-1) node[below] {$\scriptstyle 6$};
      \draw[black] (0.5,-1) -- (1,-1);
      \node at (-1,-1) {$\scriptstyle \bullet$};
      \node at (-1,1) {$\scriptstyle \bullet$};
      \node at (1,-1) {$\scriptstyle \bullet$};
      \node at (1,1) {$\scriptstyle \bullet$};
\end{scope}
\begin{scope}[xshift=9cm,yshift=0cm,scale=1.1]
      \node at (0,0) {\includegraphics[width=0.75in]{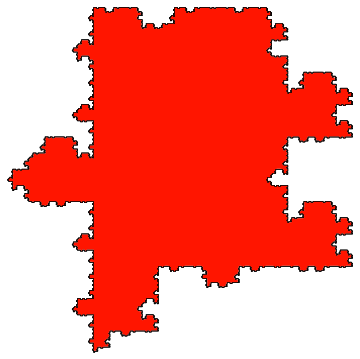}};
      \draw[black,->] (-1.0,-1.0) --  (-1.0,-0.5) node[left] {$\scriptstyle 7$};
      \draw[black,->] (-1,-0.5) -- (-1,0.5) node[left] {$\scriptstyle 1$};
      \draw[black] (-1,0.5) -- (-1,1);
      \draw[black,->] (-1,1) --  (-0.5,1) node[above] {$\scriptstyle 4$};
      \draw[black,->] (-0.5,1) -- (0.5,1) node[above] {$\scriptstyle 6$};
      \draw[black] (0.5,1) -- (1,1);
      \draw[black,->] (1.0,-1.0) --  (1.0,-0.5) node[right] {$\scriptstyle 3$};
      \draw[black,->] (1,-0.5) -- (1,0.5) node[right] {$\scriptstyle 5$};
      \draw[black] (1,0.5) -- (1,1);
      \draw[black,->] (-1,-1) --  (-0.5,-1) node[below] {$\scriptstyle 8$};
      \draw[black,->] (-0.5,-1) -- (0.5,-1) node[below] {$\scriptstyle 2$};
      \draw[black] (0.5,-1) -- (1,-1);
      \node at (-1,-1) {$\scriptstyle \bullet$};
      \node at (-1,1) {$\scriptstyle \bullet$};
      \node at (1,-1) {$\scriptstyle \bullet$};
      \node at (1,1) {$\scriptstyle \bullet$};
\end{scope}
\begin{scope}[xshift=0cm,yshift=-3cm,scale=1.1]
      \node at (0,0) {\includegraphics[width=0.75in]{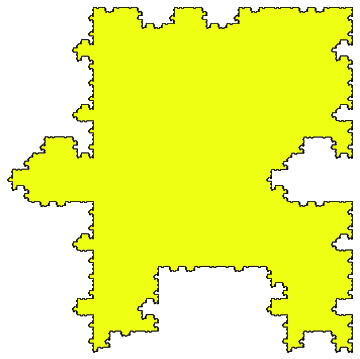}};
      \draw[black,->] (-1.0,-1.0) --  (-1.0,-0.5) node[left] {$\scriptstyle 7$};
      \draw[black,->] (-1,-0.5) -- (-1,0.5) node[left] {$\scriptstyle 1$};
      \draw[black] (-1,0.5) -- (-1,1);
      \draw[black,->] (-1,1) --  (-0.5,1) node[above] {$\scriptstyle 4$};
      \draw[black,->] (-0.5,1) -- (0.5,1) node[above] {$\scriptstyle 2$};
      \draw[black] (0.5,1) -- (1,1);
      \draw[black,->] (1.0,-1.0) --  (1.0,-0.5) node[right] {$\scriptstyle 7$};
      \draw[black,->] (1,-0.5) -- (1,0.5) node[right] {$\scriptstyle 1$};
      \draw[black] (1,0.5) -- (1,1);
      \draw[black,->] (-1,-1) --  (-0.5,-1) node[below] {$\scriptstyle 8$};
      \draw[black,->] (-0.5,-1) -- (0.5,-1) node[below] {$\scriptstyle 6$};
      \draw[black] (0.5,-1) -- (1,-1);
      \node at (-1,-1) {$\scriptstyle \bullet$};
      \node at (-1,1) {$\scriptstyle \bullet$};
      \node at (1,-1) {$\scriptstyle \bullet$};
      \node at (1,1) {$\scriptstyle \bullet$};
\end{scope}
\begin{scope}[xshift=3cm,yshift=-3cm,scale=1.1]
      \node at (0,0) {\includegraphics[width=0.75in]{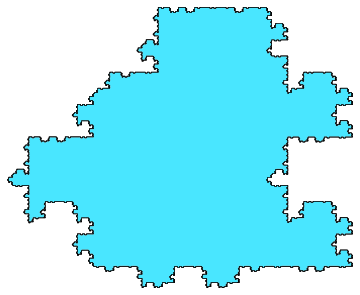}};
      \draw[black,->] (-1.0,-1.0) --  (-1.0,-0.5) node[left] {$\scriptstyle 3$};
      \draw[black,->] (-1,-0.5) -- (-1,0.5) node[left] {$\scriptstyle 5$};
      \draw[black] (-1,0.5) -- (-1,1);
      \draw[black,->] (-1,1) --  (-0.5,1) node[above] {$\scriptstyle 8$};
      \draw[black,->] (-0.5,1) -- (0.5,1) node[above] {$\scriptstyle 6$};
      \draw[black] (0.5,1) -- (1,1);
      \draw[black,->] (1.0,-1.0) --  (1.0,-0.5) node[right] {$\scriptstyle 3$};
      \draw[black,->] (1,-0.5) -- (1,0.5) node[right] {$\scriptstyle 5$};
      \draw[black] (1,0.5) -- (1,1);
      \draw[black,->] (-1,-1) --  (-0.5,-1) node[below] {$\scriptstyle 4$};
      \draw[black,->] (-0.5,-1) -- (0.5,-1) node[below] {$\scriptstyle 2$};
      \draw[black] (0.5,-1) -- (1,-1);
      \node at (-1,-1) {$\scriptstyle \bullet$};
      \node at (-1,1) {$\scriptstyle \bullet$};
      \node at (1,-1) {$\scriptstyle \bullet$};
      \node at (1,1) {$\scriptstyle \bullet$};
\end{scope}
\begin{scope}[xshift=6cm,yshift=-3cm,scale=1.1]
      \node at (0,0) {\includegraphics[width=0.75in]{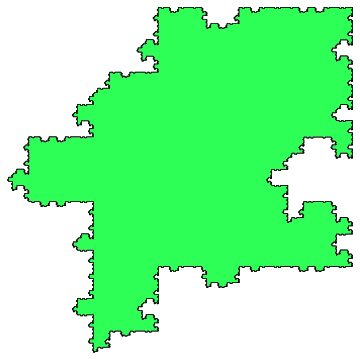}};
      \draw[black,->] (-1.0,-1.0) --  (-1.0,-0.5) node[left] {$\scriptstyle 7$};
      \draw[black,->] (-1,-0.5) -- (-1,0.5) node[left] {$\scriptstyle 5$};
      \draw[black] (-1,0.5) -- (-1,1);
      \draw[black,->] (-1,1) --  (-0.5,1) node[above] {$\scriptstyle 8$};
      \draw[black,->] (-0.5,1) -- (0.5,1) node[above] {$\scriptstyle 2$};
      \draw[black] (0.5,1) -- (1,1);
      \draw[black,->] (1.0,-1.0) --  (1.0,-0.5) node[right] {$\scriptstyle 3$};
      \draw[black,->] (1,-0.5) -- (1,0.5) node[right] {$\scriptstyle 1$};
      \draw[black] (1,0.5) -- (1,1);
      \draw[black,->] (-1,-1) --  (-0.5,-1) node[below] {$\scriptstyle 8$};
      \draw[black,->] (-0.5,-1) -- (0.5,-1) node[below] {$\scriptstyle 2$};
      \draw[black] (0.5,-1) -- (1,-1);
      \node at (-1,-1) {$\scriptstyle \bullet$};
      \node at (-1,1) {$\scriptstyle \bullet$};
      \node at (1,-1) {$\scriptstyle \bullet$};
      \node at (1,1) {$\scriptstyle \bullet$};
\end{scope}
\begin{scope}[xshift=9cm,yshift=-3cm,scale=1.1]
      \node at (0,0) {\includegraphics[width=0.75in]{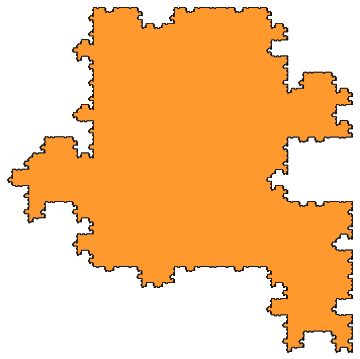}};
      \draw[black,->] (-1.0,-1.0) --  (-1.0,-0.5) node[left] {$\scriptstyle 3$};
      \draw[black,->] (-1,-0.5) -- (-1,0.5) node[left] {$\scriptstyle 1$};
      \draw[black] (-1,0.5) -- (-1,1);
      \draw[black,->] (-1,1) --  (-0.5,1) node[above] {$\scriptstyle 4$};
      \draw[black,->] (-0.5,1) -- (0.5,1) node[above] {$\scriptstyle 6$};
      \draw[black] (0.5,1) -- (1,1);
      \draw[black,->] (1.0,-1.0) --  (1.0,-0.5) node[right] {$\scriptstyle 7$};
      \draw[black,->] (1,-0.5) -- (1,0.5) node[right] {$\scriptstyle 5$};
      \draw[black] (1,0.5) -- (1,1);
      \draw[black,->] (-1,-1) --  (-0.5,-1) node[below] {$\scriptstyle 4$};
      \draw[black,->] (-0.5,-1) -- (0.5,-1) node[below] {$\scriptstyle 6$};
      \draw[black] (0.5,-1) -- (1,-1);
      \node at (-1,-1) {$\scriptstyle \bullet$};
      \node at (-1,1) {$\scriptstyle \bullet$};
      \node at (1,-1) {$\scriptstyle \bullet$};
      \node at (1,1) {$\scriptstyle \bullet$};
\end{scope}
    \end{tikzpicture}
\caption{The $8$ prototiles of the fractal realization of the 2DTM tiling from $(G,S)$. Each orientated (single) edge is labelled by a pair of edges from $G$ as pictured in the bounding squares (which are only present to label the edges).} 
\label{fig:2DTM_tiles}
\end{figure}

Applying the machinery from Section \ref{sec:psi} we obtain the tiles appearing in Figure \ref{fig:2DTM_tiles} along with their substitution appearing in Figure \ref{fig:2DTM_substitution}.\begin{figure}[ht]
\[
	\includegraphics[width=1.0in]{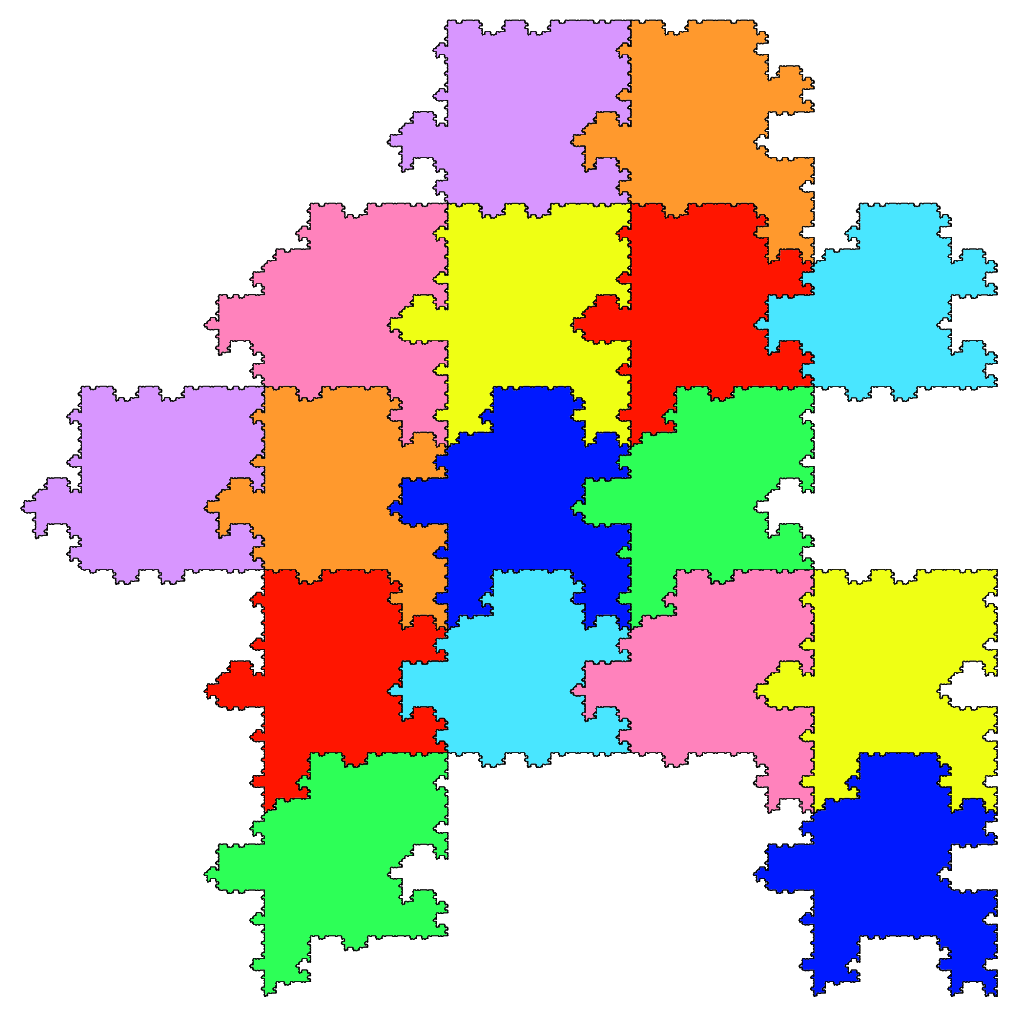}
	\includegraphics[width=1.0in]{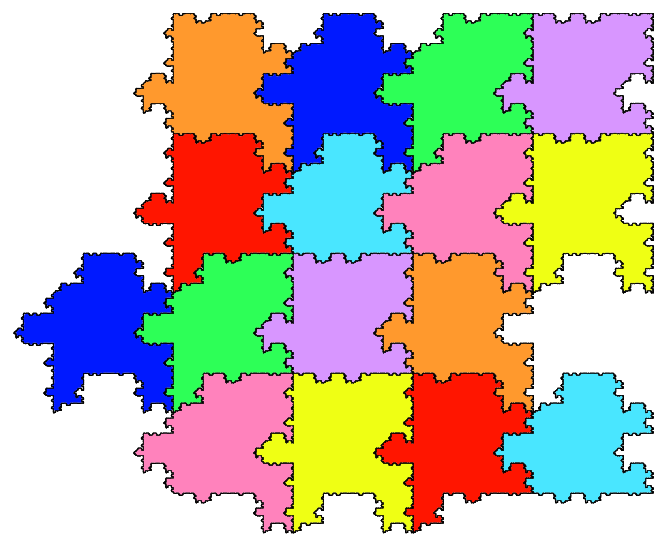} 
	\includegraphics[width=1.0in]{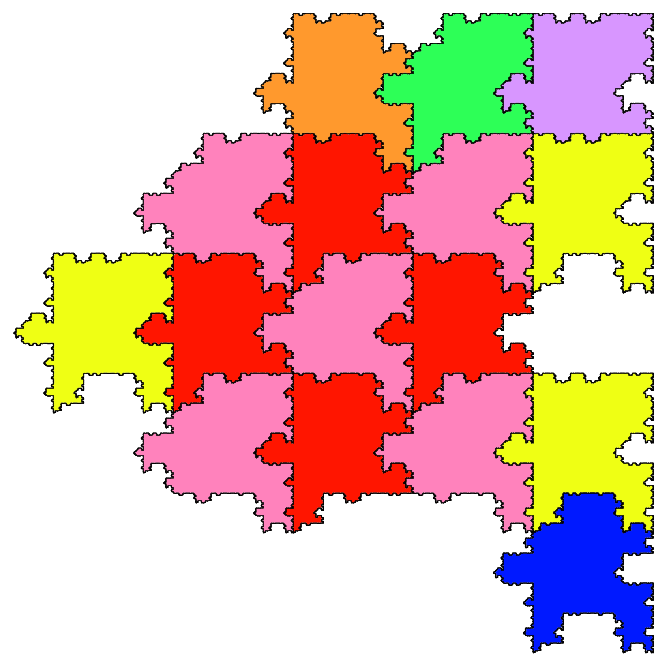} 
	\includegraphics[width=1.0in]{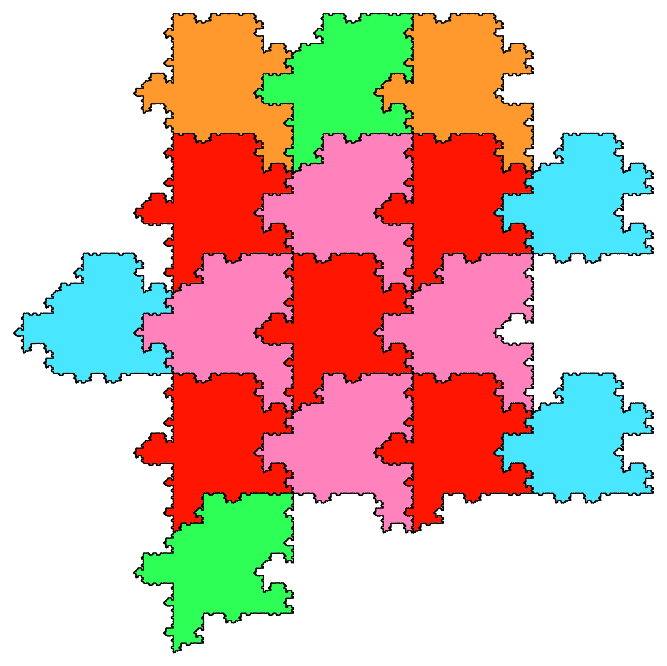}	
\]
\[
	\includegraphics[width=1.0in]{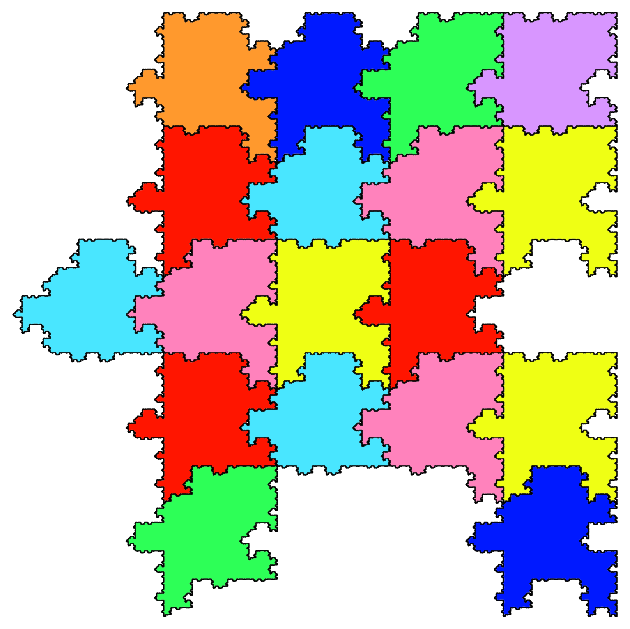}
	\includegraphics[width=1.0in]{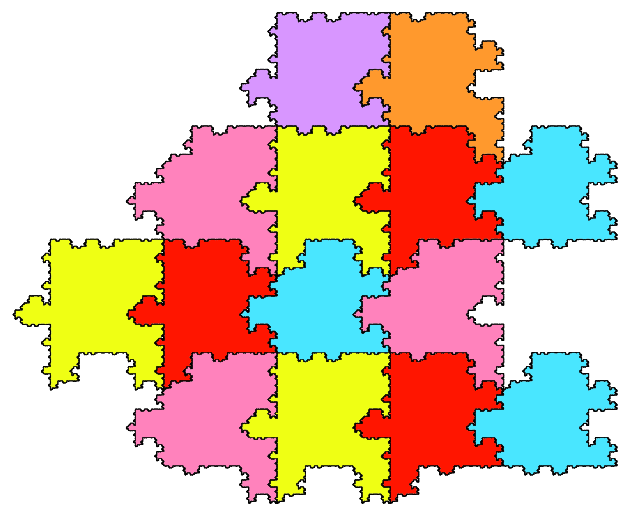} 
	\includegraphics[width=1.0in]{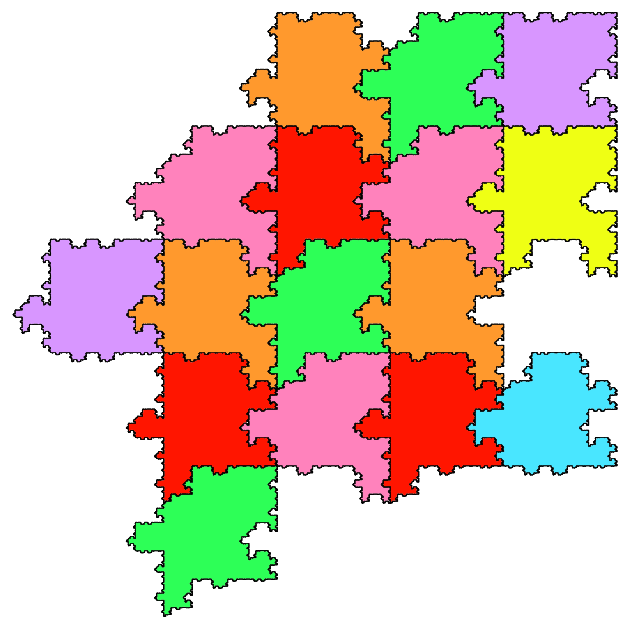} 
	\includegraphics[width=1.0in]{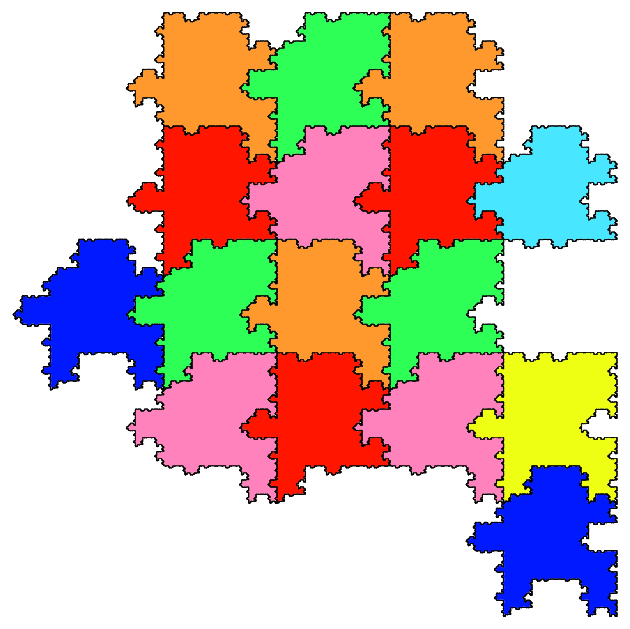}	
\]
\caption{The substitutions of the $8$ fractal prototiles (not to scale).}
\label{fig:2DTM_substitution} 
\end{figure}

Our first goal is to define the matrices used to compute the cohomology $\check{H}^i(\oinf)$. The fractal edges of the tiles are in bijective correspondence with pairs of edges in $G$ whose endpoints are identified in $S$ by boundary vertices. Since we will be interested in the substitution matrices of these edges, we label them in the ordered set $\{42, 46, 82, 86, 31, 35, 71, 75\}$. The pictures above give rise to the matrices:

\begin{small}
\[
A_0=\left( \begin{matrix}
1 & 0 \\
0 & 1 \\
\end{matrix} \right)
\quad
\delta_1=\left( \begin{matrix}
0 & 0 \\
-1 & 1\\
1 & -1 \\
0 & 0 \\
0 & 0 \\
-1 & 1 \\
1 & -1 \\
0 & 0 \\
\end{matrix} \right)
\quad
\delta_2=\left( \begin{matrix}
0 & 0 & 0 & 0 & 0 & 0 & 0 & 0 \\
0 & 0 & 0 & 0 & 0 & 0 & 0 & 0 \\
0 & -1 & 1 & 0 & 0 & 1 & -1 & 0 \\
0 & 1 & -1 & 0 & 0 & -1 & 1 & 0 \\
1 & 0 & 0 & -1 & 0 & 0 & 0 & 0 \\
-1 & 0 & 0 & 1 & 0 & 0 & 0 & 0 \\
0 & 0 & 0 & 0 & -1 & 0 & 0 & 1 \\
0 & 0 & 0 & 0 & 1 & 0 & 0 & -1 \\
\end{matrix} \right)
\]
\[
A_1=\left( \begin{matrix}
1 & 1 & 1 & 1 & 0 & 0 & 0 & 0 \\
0 & 2 & 1 & 1 & 0 & 0 & 0 & -1 \\
1 & 1 & 2 & 0 & 1 & 0 & 0 & 0 \\
1 & 1 & 1 & 1 & 1 & 0 & 0 & -1 \\
0 & 0 & 0 & 0 & 1 & 1 & 1 & 1 \\
1 & 0 & 0 & -1 & 0 & 2 & 1 & 0 \\
-1 & 0 & 0 & 1 & 1 & 1 & 2 & 1 \\
0 & 0 & 0 & 0 & 1 & 1 & 1 & 1 \\
\end{matrix} \right)
\quad
A_2=\left( \begin{matrix}
2&2&2&2&2&2&2&2\\
2&2&2&2&2&2&2&2\\
1&1&5&4&3&0&1&1\\
0&0&4&5&0&3&2&2\\
2&1&3&3&3&3&2&1\\
0&1&3&3&3&3&0&1\\
0&2&3&3&1&1&3&3\\
2&0&3&3&1&1&3&3\\
\end{matrix} \right)
\]
\end{small}

The zeroth cohomology of the approximant $\Gamma$ is generated by $\ker(\delta_0)=(1 1)^t$, which is viewed as the function assigning the value $1$ to each vertex in $\Gamma$. So $\check{H}^0(\Gamma) \cong \Z$. Since $A_0$ is the identity matrix, it follows that $\check{H}^0(\oinf) \cong \Z$ as well.

The first cohomology of the approximant $\Gamma$ is given by $\check{H}^1(\Gamma)= \ker(\delta_1)/\Im(\delta_0)$, and routine linear algebra shows that
\begin{small}
\[
\check{H}^1(\Gamma)=\newspan \left\{ 
\left( \begin{matrix} 1\\0\\0\\1\\0\\0\\0\\0\\ \end{matrix} \right),
\left( \begin{matrix} 0\\1\\1\\0\\0\\0\\0\\0\\ \end{matrix} \right),
\left( \begin{matrix} 0\\0\\0\\0\\1\\0\\0\\1\\ \end{matrix} \right),
\left( \begin{matrix} 0\\1\\0\\0\\0\\1\\0\\0\\ \end{matrix} \right)
\right\} \cong \Z^4.
\]
\end{small}
The induced matrix $A_1^*$ on $\check{H}^1(\Gamma)$ has eigenvalues $\{4,4,1,1\}$ and eigenvectors $EV(A_1^*)$:
\begin{small}
\[
A_1^*=\left( \begin{matrix}
2 & 1 & 0 & 0 \\
2 & 3 & 0 & 0 \\
0 & -1 & 2 & 2 \\
1 & 0 & 1 & 3 \\
\end{matrix} \right)
\quad
EV(A_1^*)=\left\{ 
\left( \begin{matrix} 1\\ 2\\0\\ 1\\ \end{matrix} \right),
\left( \begin{matrix} -1\\ -2\\1\\ 0\\ \end{matrix} \right),
\left( \begin{matrix} -2\\ 2\\0\\ 1\\  \end{matrix} \right),
\left( \begin{matrix} -1\\ 1\\1\\ 0\\  \end{matrix} \right)
\right\}.
\]
\end{small}
Thus, we have $\check{H}^1(\oinf) \cong \Z [1/4]^2 \oplus \Z^2$.

Finally, $\check{H}^2(\Gamma)= \Z^8/\Im(\delta_1)$ and we see that
\begin{small}
\[
\check{H}^2(\Gamma)=\newspan \left\{ 
\left( \begin{matrix} 1\\1\\0\\0\\0\\0\\0\\0\\ \end{matrix} \right),
\left( \begin{matrix} 0\\0\\1\\1\\0\\0\\0\\0\\ \end{matrix} \right),
\left( \begin{matrix} 0\\0\\0\\0\\1\\1\\0\\0\\ \end{matrix} \right),
\left( \begin{matrix} 0\\0\\0\\0\\0\\0\\1\\1\\ \end{matrix} \right),
\left( \begin{matrix} 1\\-1\\0\\0\\0\\0\\0\\0\\ \end{matrix} \right)
\right\}.
\]
\end{small}
The induced matrix $A_2^*$ on $\check{H}^2(\Gamma)$ has nonzero eigenvalues $\{16,4,4,1\}$ with eigenvectors $EV(A_2^*)$:
\begin{small}
\[
A_2^*=\left( \begin{matrix}
4 & 1 & 2 & 2 & 0 \\
4 & 9 & 6 & 6 & 0 \\
4 & 3 & 6 & 2 & 0 \\
4 & 3 & 2 & 6 & 0 \\
0 & 0 & 0 & 0 & 0 \\
\end{matrix} \right)
\quad
EV(A_2^*)=\left\{ 
\left( \begin{matrix} 1\\ 4\\2\\ 2\\ 0\\ \end{matrix} \right),
\left( \begin{matrix} 4\\ 9\\6\\ 6\\ 0\\ \end{matrix} \right),
\left( \begin{matrix} 4\\ 3\\6\\ 2\\ 0\\  \end{matrix} \right),
\left( \begin{matrix} 4\\ 3\\2\\ 6\\ 0\\  \end{matrix} \right)
\right\}.
\]
\end{small}
Thus, we have $\check{H}^2(\oinf) \cong \Z [1/16] \oplus \Z [1/4]^2 \oplus \Z$.

\appendix

\section{Examples}\label{examples_appendix_sec}

\subsection{A fractal dual for the Ammann-Beenker (``octagonal") tiling}

We consider the version of this tiling using two labelled right triangles and a parallelogram, where the labels keep track of the handedness of the tiles.  These tiles come in rotations by $\pi/4$, all of which we treat the same way.  
In Figure \ref{octagonalrecur} we show the parallelogram and one copy of the triangle; all rotations and reflections will carry along geometric graphs.
Denoting $\pset=\{\alpha, \beta\}$ we define $G$ to be a $T$-consistent dual graph in both $\alpha$ and $\beta$. We show $G, \rrule(\pset), \rrule(G),$ and select $S$ as in Figure \ref{octagonalrecur}. The fractal realization of the Ammann-Beenker Tiling induced by $(G,S)$ is shown in Figure \ref{bigoct}.
\begin{figure}[ht]
\[
\scalebox{0.9}{
\begin{tikzpicture}
%

\node (a) at (0,3) [label=below:{$\alpha$ and $G_\alpha$}] {\includegraphics[width=3cm]{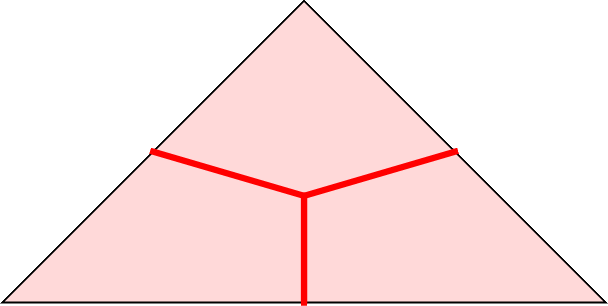}};
\node (b) at (4,3) [label=below:{$\rrule(G)_\alpha$}] {\includegraphics[width=3cm]{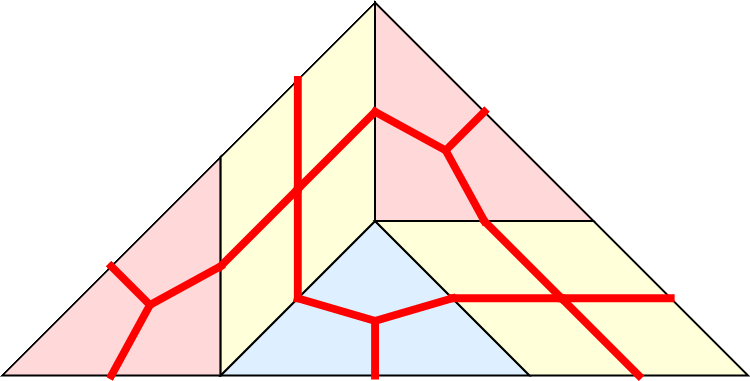}}
	edge[<-] (a);
\node (c) at (8,3) [label=below:{$S_\alpha$}] {\includegraphics[width=3cm]{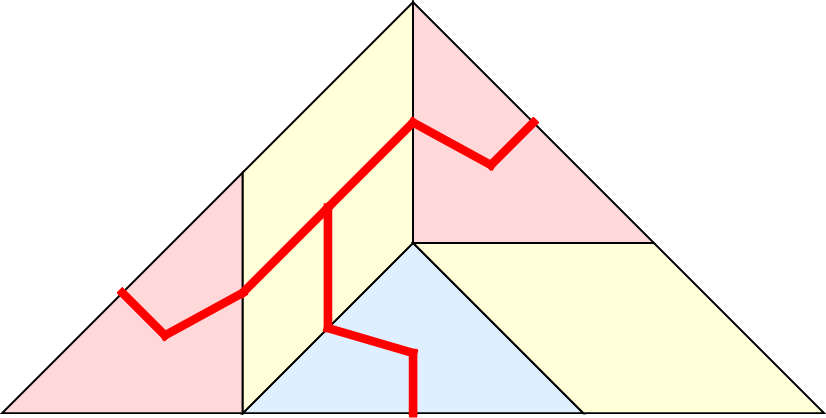}}
	edge[<-] (b);
\node (d) at (12,3) {\includegraphics[width=3cm]{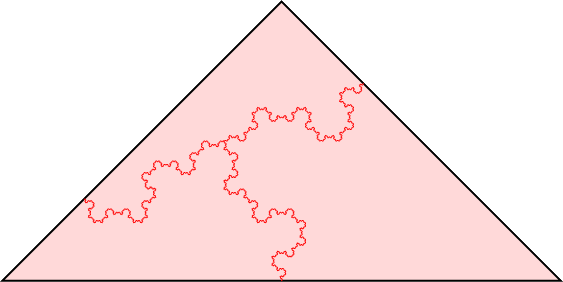}}
	edge[<-,dashed] (c);

\node (e) at (0,0) [label=below:{$\beta$ and $G_\beta$}] {\includegraphics[width=3cm]{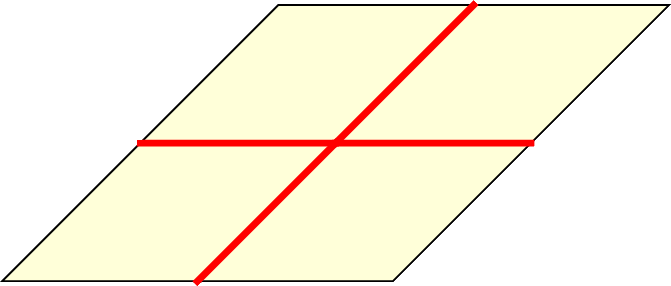}};
\node (f) at (4,0) [label=below:{$\rrule(G)_\beta$}] {\includegraphics[width=3cm]{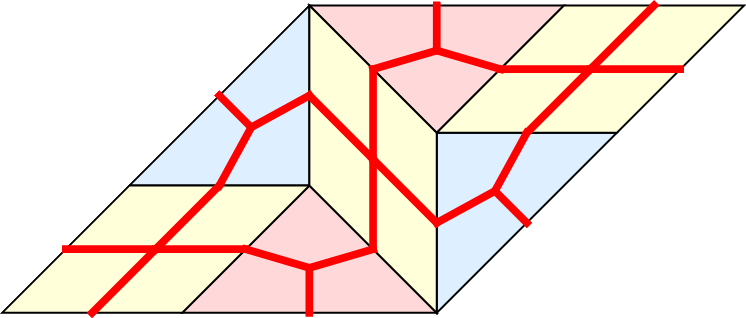}}
	edge[<-] (e);
\node (g) at (8,0) [label=below:{$S_\beta$}] {\includegraphics[width=3cm]{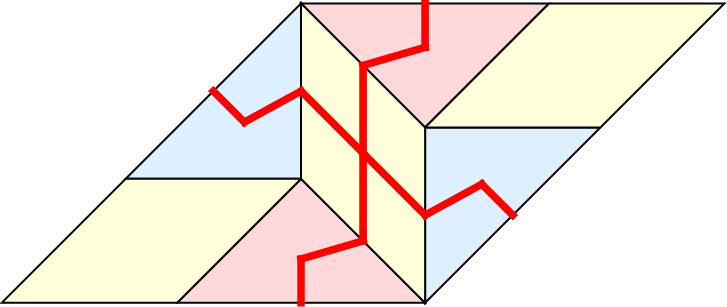}}
	edge[<-] (f);
\node (h) at (12,0)  {\includegraphics[width=3cm]{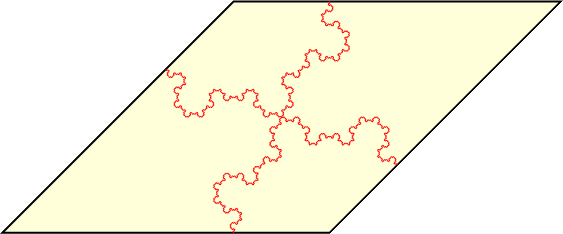}}
	edge[<-,dashed] (g);
\end{tikzpicture}}
\]
\caption{A recurrent pair and resulting fractal for the dual graph of the Ammann-Beenker tiling}
\label{octagonalrecur}
\end{figure}
\begin{figure}[ht]
\includegraphics[width=5in]{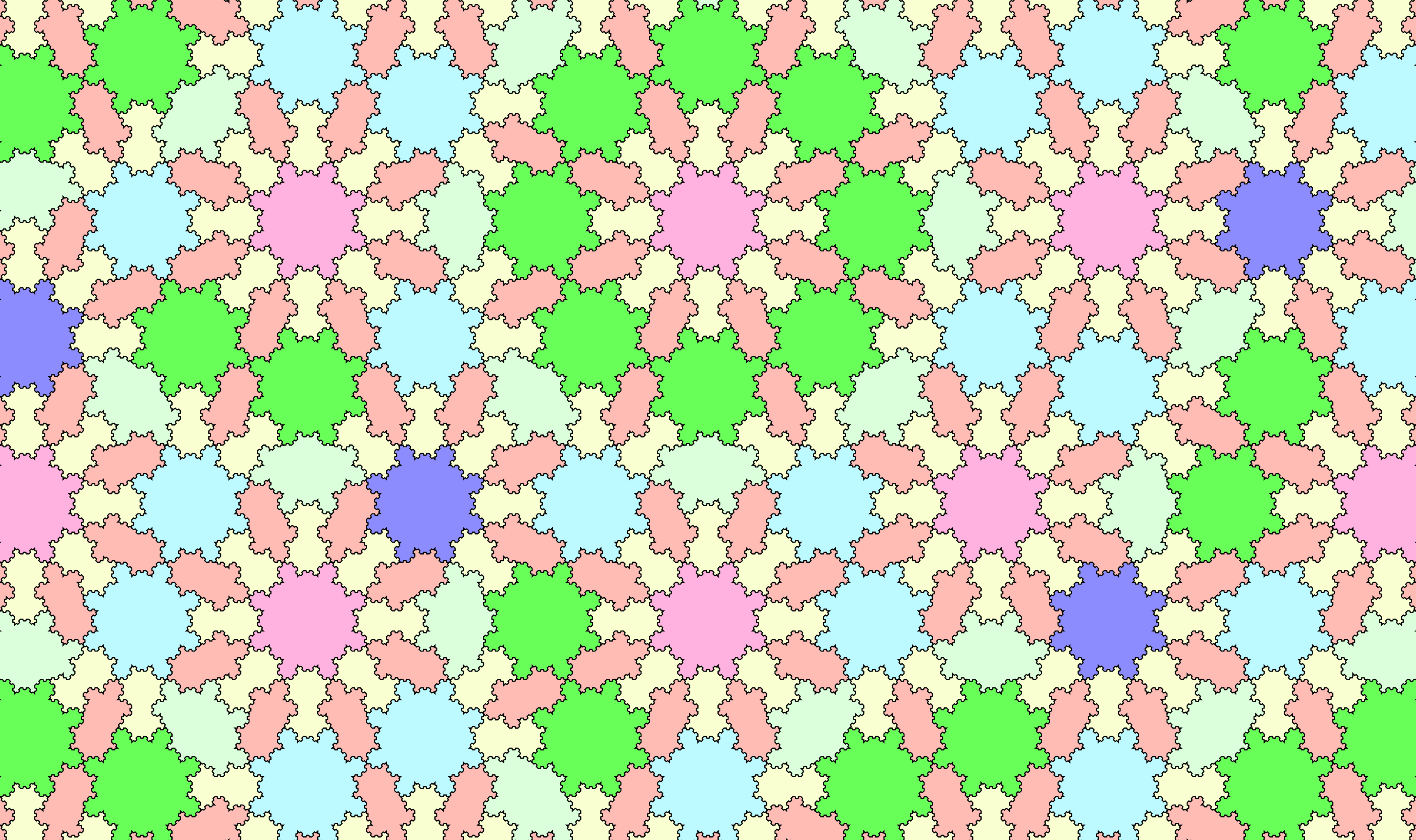}
\caption{A patch of a fractal dual Ammann-Beenker tiling}
\label{bigoct}
\end{figure}

In Figure \ref{octsubs} we show the substitution rule for the prototiles of the fractal realization.
\begin{figure}[ht]

\[
\begin{tikzpicture}[>=stealth] 

\node (t1) at (0,4) {\includegraphics[scale=0.2]{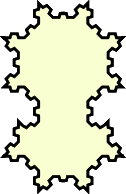}};
\node (t1s) at (2,4) {\includegraphics[scale=0.2]{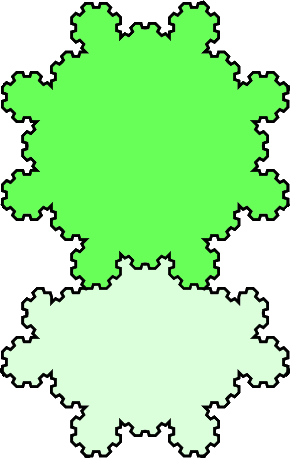}}
	edge[<-] (t1);
\node (t2) at (4,4) {\includegraphics[scale=0.2]{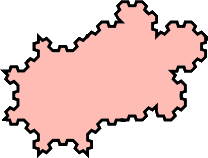}};
\node (t2s) at (6.8,4) {\includegraphics[scale=0.2]{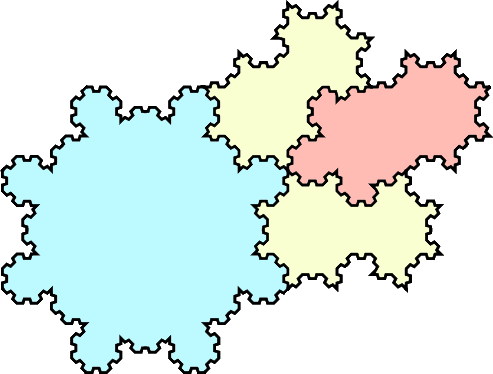}}
	edge[<-] (t2);
\node (t3) at (9.5,4) {\includegraphics[scale=0.2]{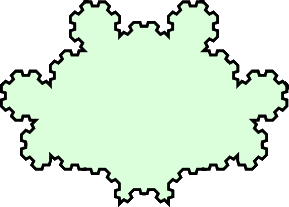}};
\node (t3s) at (13,4) {\includegraphics[scale=0.2]{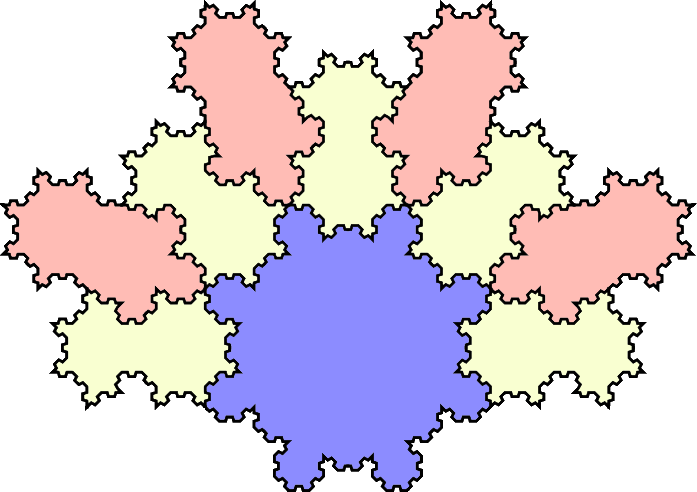}}
	edge[<-] (t3);
		
\node (t4) at (3,1) {\includegraphics[scale=0.2]{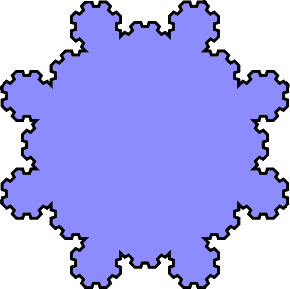}};
\node (t5) at (3,-1) {\includegraphics[scale=0.2]{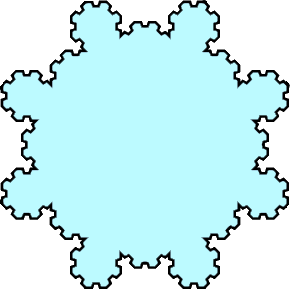}};
\node (t6) at (11,1) {\includegraphics[scale=0.2]{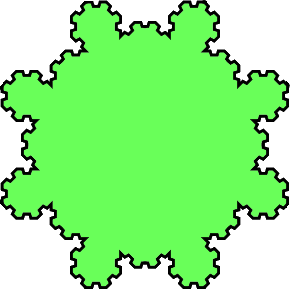}};
\node (t7) at (11,-1) {\includegraphics[width=1.6cm]{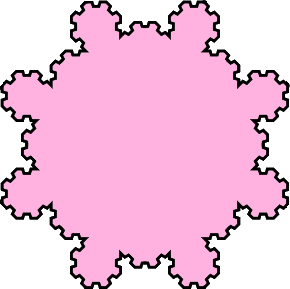}};

\node (t4s) at (7,0) {\includegraphics[scale=0.2]{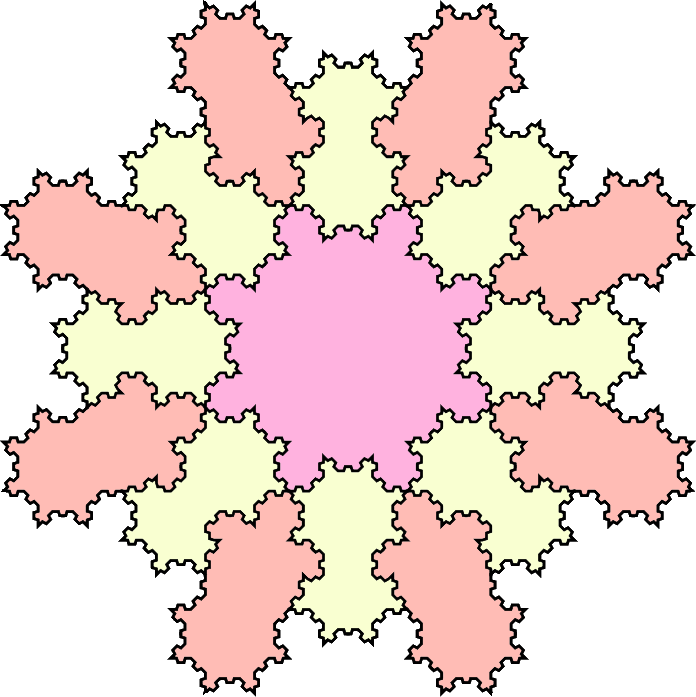}}
	edge[<-] (t4)
	edge[<-] (t5)
	edge[<-] (t6)
	edge[<-] (t7);

\end{tikzpicture}
\]

\caption{The substitution rules of this fractal realization of the Ammann-Beenker octagonal tiling.}
\label{octsubs}
\end{figure}

\subsection{Penrose's ``Pentaplexity" tiling \cite{pentaplexity}}  The simplest self-similar version of the Penrose tilings has a prototile set with forty triangles, the two tiles on the left of Figure \ref{penrosetrianglegraph}, their labelled reflections, and all rotation by $\pi/5$ of these four prototiles.  These triangles can be combined to produce either the kite and dart or the rhombus tilings, and all three prototile sets have matching rules that make them aperiodic tile sets.  
The kite/dart and rhombus versions are only pseudo-self-similar and not actually self-similar, so we've chosen to work with the triangle version instead.  One interesting note about the fractal tiling we obtain is that it is closely related to the pentaplexity version of the Penrose tiling that arises in the first six images of \cite{pentaplexity}.
\begin{figure}[ht]
\[
\scalebox{0.9}{
\begin{tikzpicture}
\node (a) at (0,4) [label=below:{$\alpha$ and $G_\alpha$}] {\includegraphics[width=3cm]{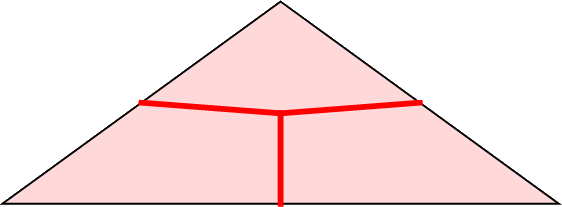}};
\node (b) at (4,4) [label=below:{$\rrule(G)_\alpha$}] {\includegraphics[width=3cm]{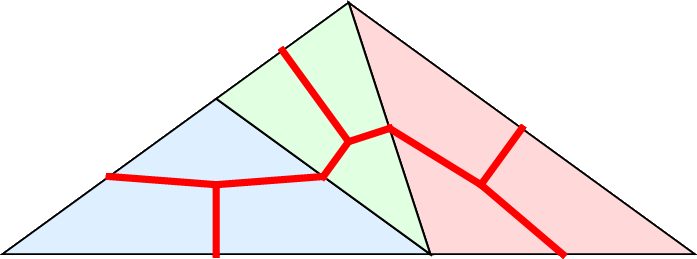}}
	edge[<-] (a);
\node (c) at (8,4) [label=below:{$S_\alpha$}] {\includegraphics[width=3cm]{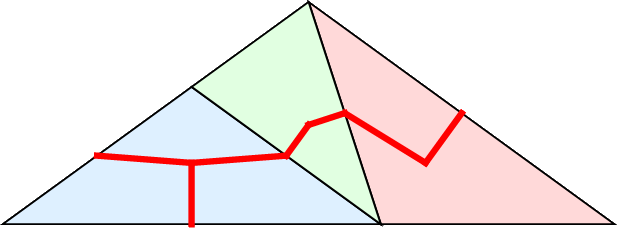}}
	edge[<-] (b);
\node (d) at (12,4) {\includegraphics[width=3cm]{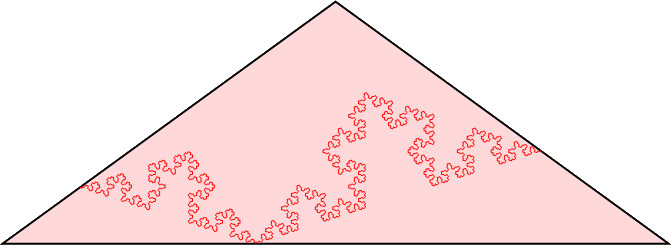}}
	edge[<-,dashed] (c);

\node (e) at (1.5,0) [label=below:{$\beta$ and $G_\beta$}] {\includegraphics[width=2cm]{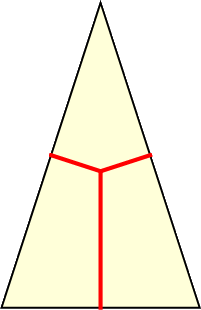}};
\node (f) at (4.5,0) [label=below:{$\rrule(G)_\beta$}] {\includegraphics[width=2cm]{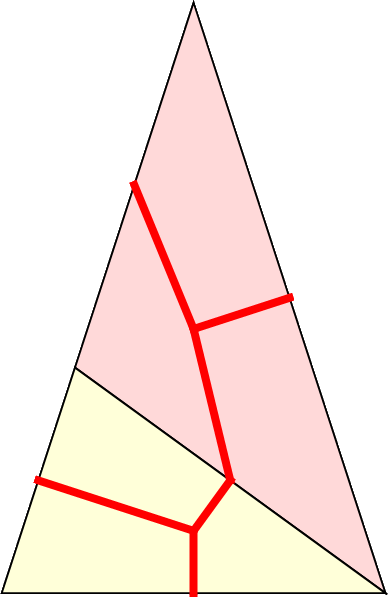}}
	edge[<-] (e);
\node (g) at (7.5,0) [label=below:{$S_\beta$}] {\includegraphics[width=2cm]{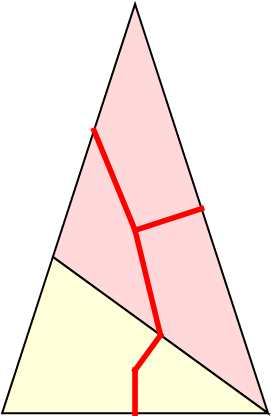}}
	edge[<-] (f);
\node (h) at (10.5,0)  {\includegraphics[width=2cm]{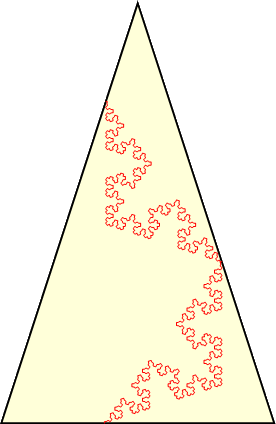}}
	edge[<-,dashed] (g);
\end{tikzpicture}}
\]
\caption{A recurrent pair for the Penrose triangles; note that each triangle come in two reflections and so do their geometric graphs.}
\label{penrosetrianglegraph}
\end{figure}
We choose an initial graph $G$ on the two prototiles as pictured in Figure \ref{penrosetrianglegraph} along with a choice of $S$ that produces a recurrent pair. We then extend these graphs to all forty prototiles by reflection and rotation.  What we gain in simplicity by working with one iteration of the substitution we pay for with an inability to construct $S$ in a way that satisfies the injectivity conditions for any level of $N$-subtiles. The recurrent pair $(G,S)$ does not give rise to an injective $\psiinf$ because one edge in each prototile is collapsed into a vertex, so Theorem \ref{prop:T infty is a tiling} does not apply.  Nonetheless $\psiinf(\partial\tzero)$ forms the boundary of a self-similar tiling $\Tinf$, a patch of which is shown in Figure \ref{bigPenrose}.
\begin{figure}[ht]
\includegraphics[width=5in]{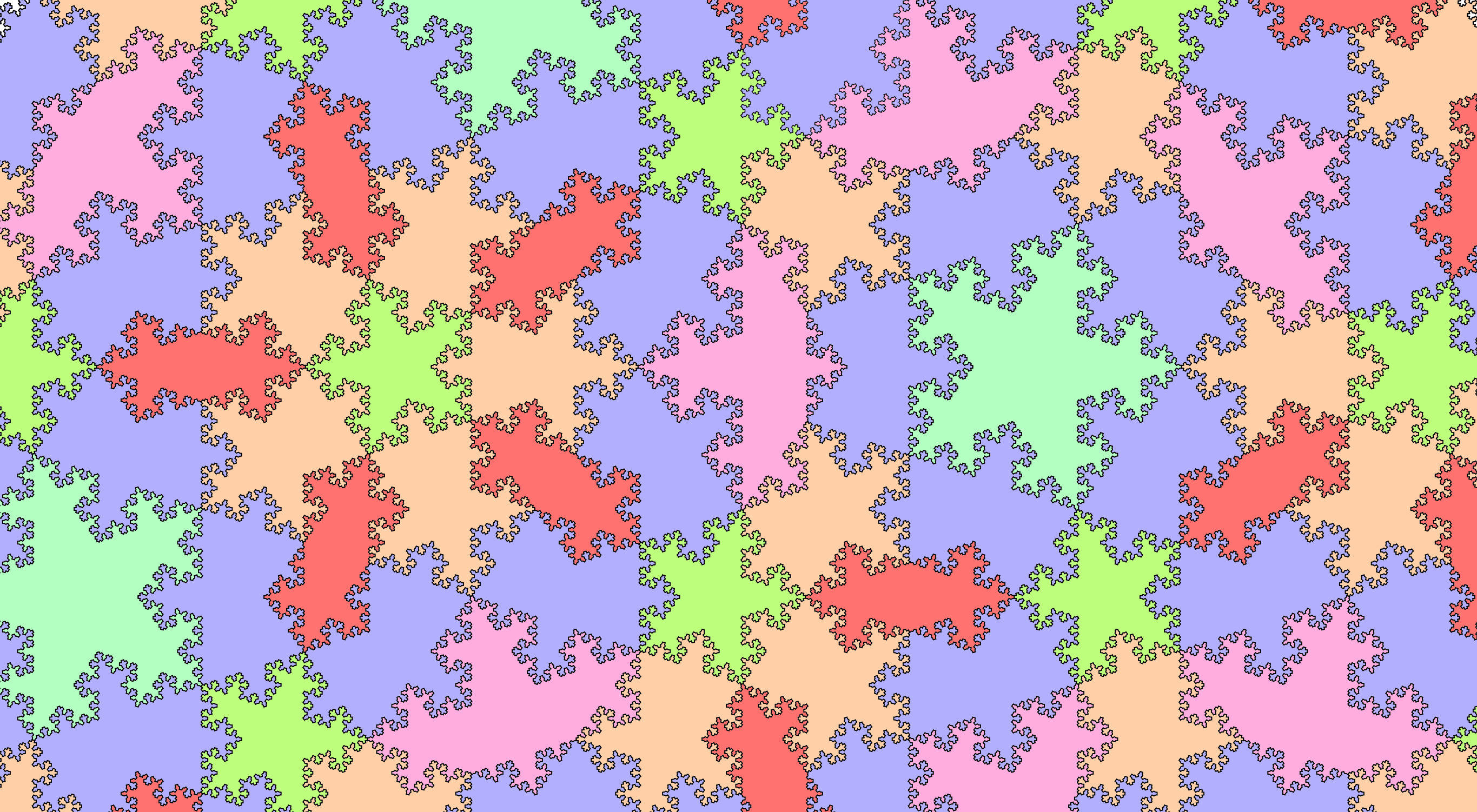}
\caption{A patch of the fractal pentaplexity tiling.}
\label{bigPenrose}
\end{figure}

\begin{figure}[ht]

\[
\begin{tikzpicture}[>=stealth] 

\node (t1) at (0,3) {\includegraphics[width=1.0cm]{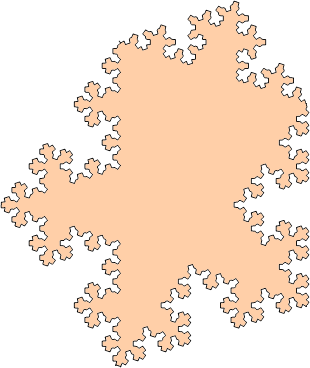}};
\node (t1s) at (2.5,3) {\includegraphics[width=1.6cm]{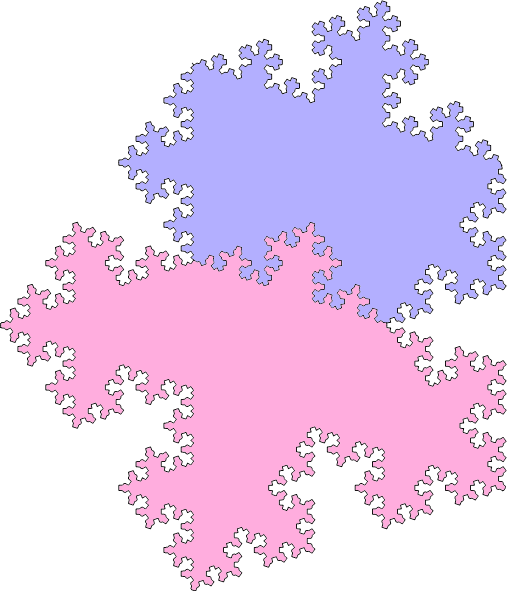}}
	edge[<-] (t1);
\node (t2) at (5,3) {\includegraphics[width=1.0cm]{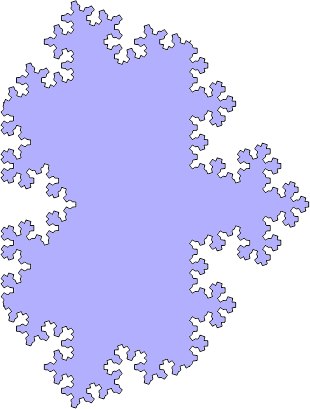}};
\node (t2s) at (7.5,3) {\includegraphics[width=1.6cm]{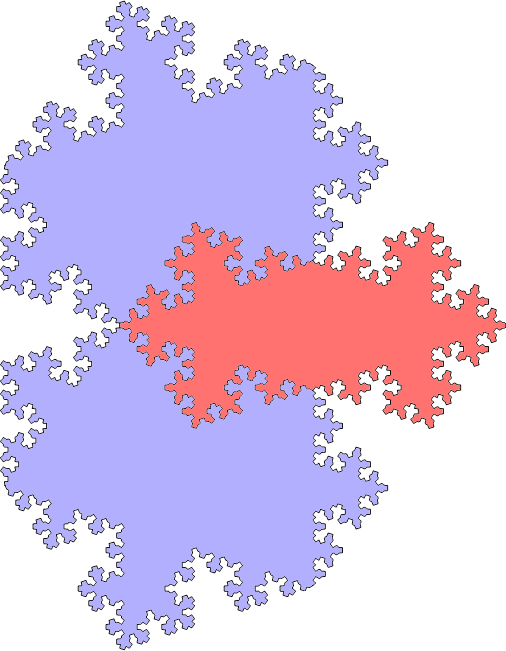}}
	edge[<-] (t2);
\node (t3) at (10,3) {\includegraphics[width=1.25cm]{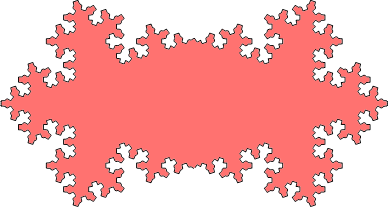}};
\node (t3s) at (12.5,3) {\includegraphics[width=2.0cm]{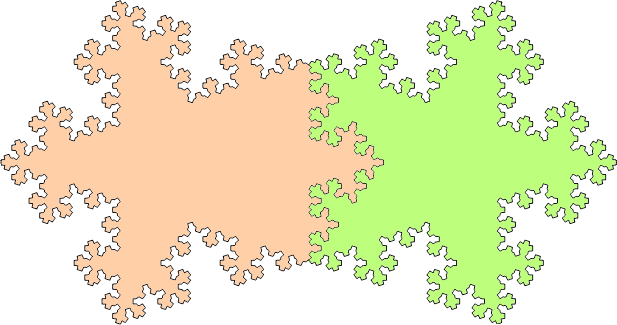}}
	edge[<-] (t3);
		
\node (t4) at (0,0) {\includegraphics[width=1.65cm]{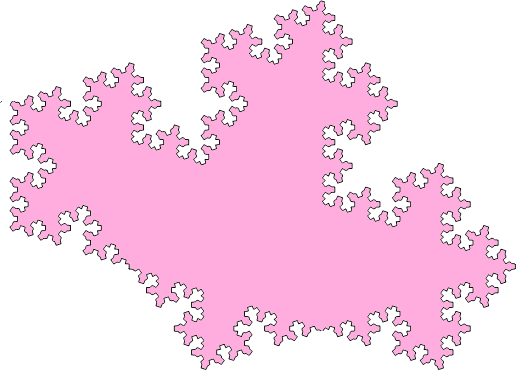}};
\node (t4s) at (3,0) {\includegraphics[width=2.65cm]{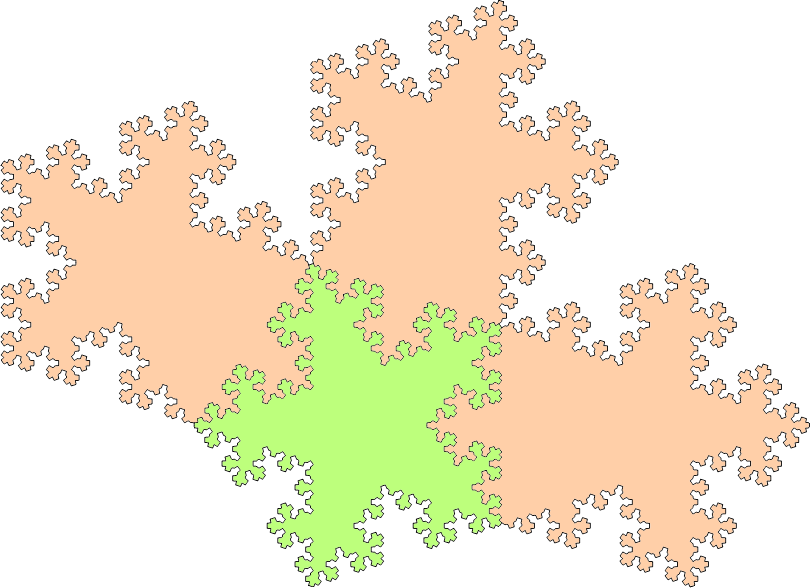}}
	edge[<-] (t4);
\node (t5) at (5.25,0) {\includegraphics[width=1.0cm]{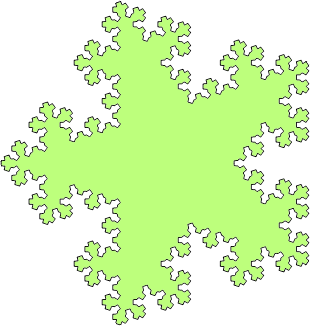}};
\node (t5s) at (7.5,0) {\includegraphics[width=1.6cm]{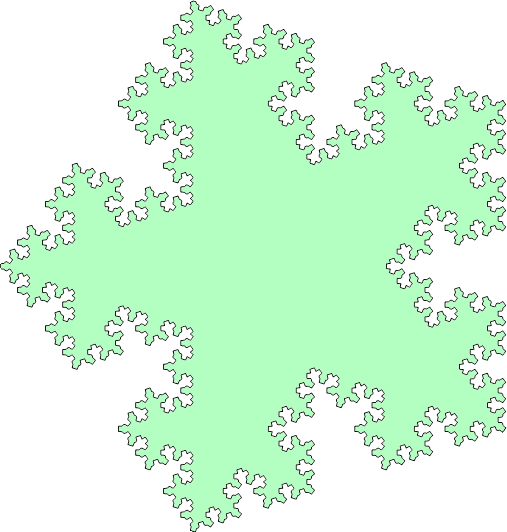}}
	edge[<-] (t5);
\node (t6) at (9.5,0) {\includegraphics[width=1.6cm]{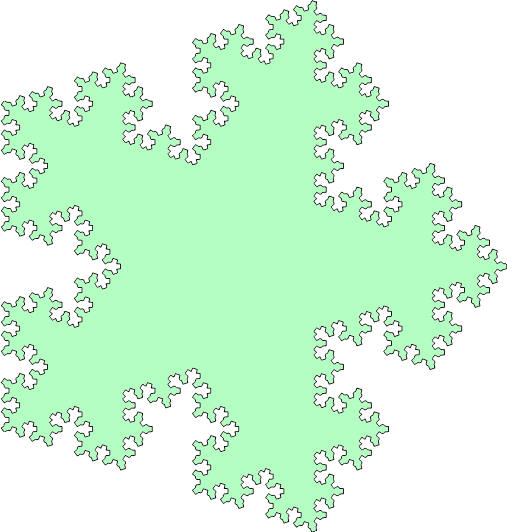}};
\node (t6s) at (12.5,0) {\includegraphics[width=2.56cm]{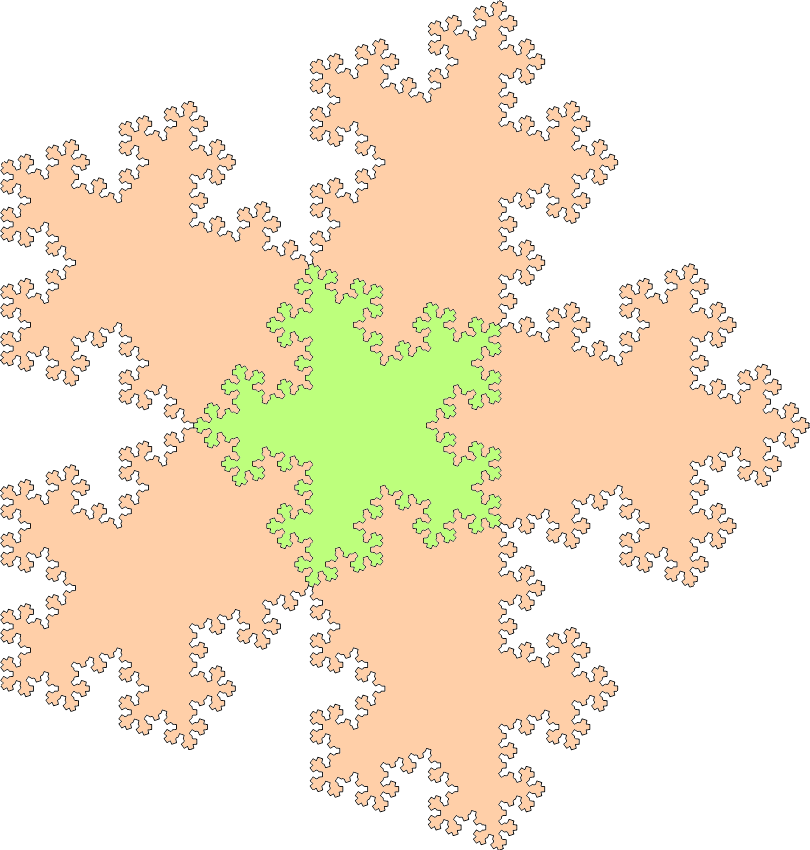}}
	edge[<-] (t6);

\end{tikzpicture}
\]

\caption{A fractal version of the pentaplexity substitution. }
\label{PenroseFractalSub}
\end{figure}

There are five tile types in this fractal Penrose prototile set (ignoring rotations), some of which are shown along with their substitutions in Figure \ref{PenroseFractalSub}.  The connection to the tiles developed by Penrose at the beginning of \cite{pentaplexity} were discussed in a 2013 talk \cite{BrunellSherwood}.

\subsection{Two ways to construct fractal realizations of the chair tiling}

If we endow the chair tiles with vertices from a tiling $T$ as described in section \ref{subsec:tilinggraphs}, then each tile is an octagon and the tiling is edge-to-edge but not singly edge-to-edge. Thus the existence theorem does not directly apply.  Nonetheless it is possible to use the existence algorithm in Section \ref{subsec:exist} to establish numerous fractal realizations by quasi-dual graphs.  Interestingly, none of these quasi-dual graphs can actually be duals.

Another approach is to label the tiles of $T$ by the number of natural edges they have.  It turns out that there are three types of tiles in this case and they have 4, 5, and 6 edges.  We can expand our prototile set to have twelve labels rather than four (once rotations are accounted for) and it is routine to write down the tiling substitution for each of the three tile types.  We exhibit a self-similar fractal realization for the dual tiling in this situation.

There are many other possible ways to manage the chair tiling, for instance by passing to the ``square chair" version \cite[p.16]{Lorenzo.book}, which subdivides each chair tile into three labelled squares. The square chair is single edge-to-edge so that fractal realizations are guaranteed by Theorem \ref{thm:existence}.  This is an MLD operation and, as above, yields equivalent hulls, but the geometry and combinatorics are not as well preserved. 

\begin{example}[Chairs as octagons]
We consider there to be four chair tiles, each octagons, with natural and inherited vertices as shown in Figure \ref{chair.vertices}.  As usual, we treat all rotations the same so our figures contain only one tile. 

We begin by noticing the simple reason why the dual tiling can never have a self-similar realization.  The dual graph $G_0$ has, on each prototile, a single interior vertex of degree 8. Trying to find a subgraph $S$ of $\rrule^N(G_0)$ that is equivalent to $G_0$ is doomed to failure: no matter which vertex of degree 8 is selected from the interior, there will be two edges emanating from that vertex that come back together in an adjacent subtile and cannot therefore both be part of a tree.  This illustrates how essential the singly edge-to-edge condition is to our construction.

It is possible to run the algorithm guaranteeing existence in Theorem \ref{thm:existence} to obtain a quasi-dual recurrent pair for the octagonal chair tiling even though the tiling is not singly edge-to-edge. The ultimate choice of $G$ has a minimum of two vertices on the interior of the prototile and the minimum number of iterations required to satisfy the injectivity conditions is 3.  Instead, we choose to exhibit a recurrent pair that is not quasi-dual and whose limiting fractal intersects prototile vertices but for which $\psiinf$ is injective nonetheless.  

The initial graph $G$ has 9 edges and two interior vertices, and two of the boundary vertices have degree 2; we show this graph on the left of Figure \ref{chairoctgraph}.  Conveniently, there is a subgraph of $\rrule(G)_\alpha$ that is isomorphic to $G$ which we extract as shown in the same figure. 
\begin{figure}[ht]
\[
\begin{tikzpicture}[scale=0.9]
\node (a) at (0,4) [label=below:{$\alpha$ and $G_\alpha$}] {\includegraphics[width=.8in]{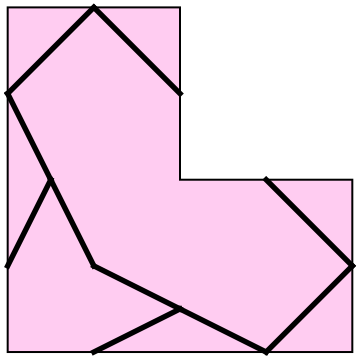}};
\node (b) at (4,4) [label=below:{$\rrule(G)_\alpha$}] {\includegraphics[width=.8in]{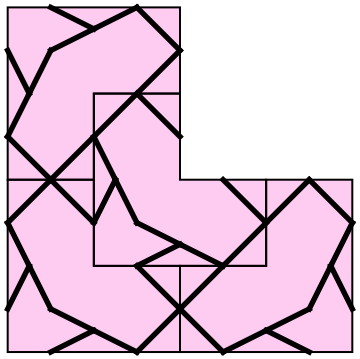}}
	edge[<-] (a);
\node (c) at (8,4) [label=below:{$S_\alpha$}] {\includegraphics[width=.8in]{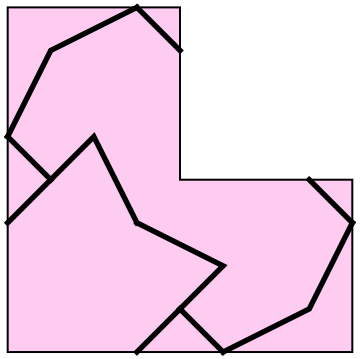}}
	edge[<-] (b);
\node (d) at (12,4) {\includegraphics[width=.8in]{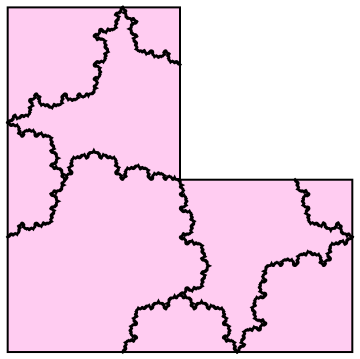}}
	edge[<-,dashed] (c);

\end{tikzpicture}
\]
\caption{A recurrent pair and limiting fractal for the chairs as octagons.}
\label{chairoctgraph}
\end{figure}

 On the far right of Figure \ref{chairoctgraph} we see the result of iterating the graph substitution induced by this recurrent pair.  The central edge has migrated to the boundary of the prototile and touches one of its vertices. The fact that there is no problem with the injectivity of $\psiinf$ can be seen by looking at the patch of fractiles superimposed on chairs shown in Figure \ref{fig:chairoctpatch}.

\begin{figure}[ht]
\includegraphics[width=5in]{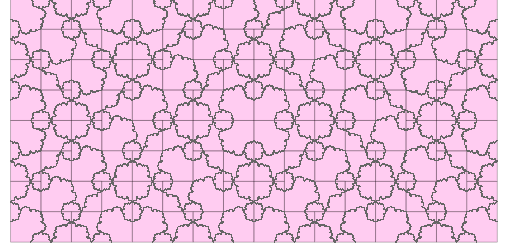}
\caption{A patch of the octagonal chair tiling with a fractal realization overlaid.}
\label{fig:chairoctpatch}
\end{figure}

\end{example}

\begin{example}[Colored chairs]
Figure \ref{fig:chairs} shows the three types of chairs along with their dual graphs, embedded in their final fractal form.  Although the graph edges appear to intersect prototile vertices, they do not: the natural vertices of these tile types do not include the graph vertices as they appear in the tiling.

\begin{figure}[ht]
\centering
\subfigure{\includegraphics[width=.8in]{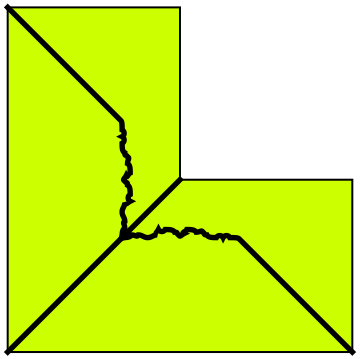}}\hspace{.35in}
\subfigure{\includegraphics[width=.8in]{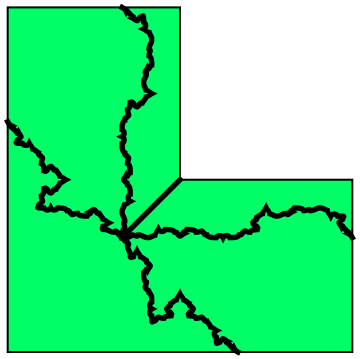}}\hspace{.35in}
\subfigure{\includegraphics[width=.8in]{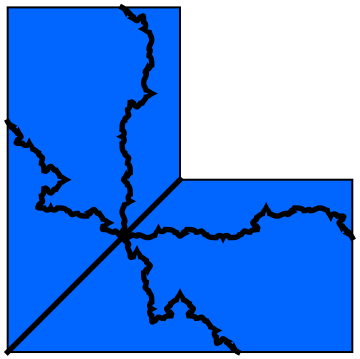}}
\caption{The three chair tiles and their fractal dual graphs}
\label{fig:chairs}
\end{figure}

The substitutions of the three tile types appear in Figure \ref{fig:chair subs}.  It should be noted that one substitution is not enough to find a recurrent pair for the dual substitution, however: none of the three graphs in $\rrule(G)$ contain combinatorially equivalent subgraphs.  Thus, we used $\rrule^2(G)$ to produce the graph $S$ for our recurrent pair.

\begin{figure}[ht]
\scalebox{0.8}{
\centering
\subfigure{\includegraphics[width=1in]{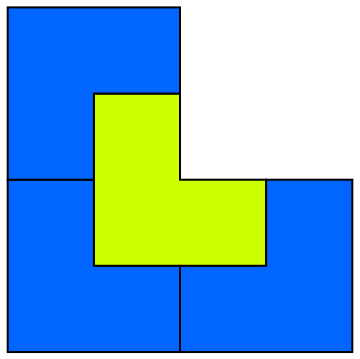}}\hspace{.35in}
\subfigure{\includegraphics[width=1in]{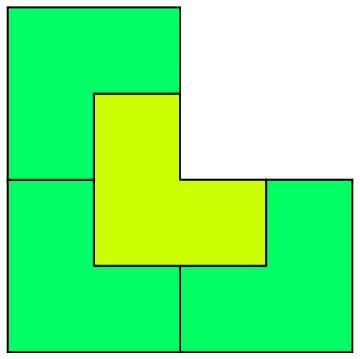}}\hspace{.35in}
\subfigure{\includegraphics[width=1in]{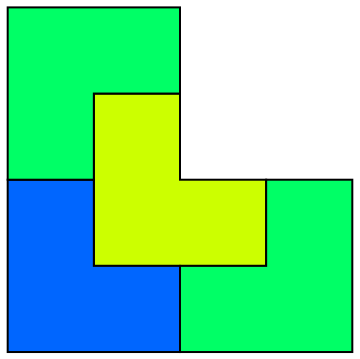}}}
\caption{Substitution of the three chairs (not to scale)}
\label{fig:chair subs}
\end{figure}

In Figure \ref{fig:chair dual its} we show a patch of the chair with the self-similar dual tiling overlaid atop it.

\begin{figure}[ht]
\includegraphics[width=5in]{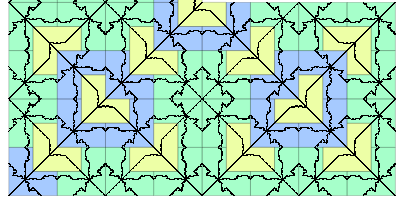}
\caption{A patch of the chair tiling with its fractal dual overlaid.}
\label{fig:chair dual its}
\end{figure}

\end{example}

\end{document}